\let\originallhook\lhook
\let\originalrhook\rhook
\let\lhook\originallhook
\let\rhook\originalrhook
\newtheorem{theorem}{Theorem}[section]
\newtheorem{proposition}[theorem]{Proposition}
\newtheorem{lemma}[theorem]{Lemma}
\newtheorem{corollary}[theorem]{Corollary}
\theoremstyle{definition}
\newtheorem{definition}[theorem]{Definition}
\newtheorem{bigremark}[theorem]{Remark}
\newtheorem{example}[theorem]{Example}
\newcounter{bean}
\newcommand{\seqm}[3]{\ensuremath{#1\stackrel{#2}
 {\longrightarrow}#3}}
\newcommand{\seqmm}[5]{\ensuremath{#1\stackrel{#2}
 {\longrightarrow}#3\stackrel{#4}{\longrightarrow}#5}}
\newcommand{\seqmmm}[7]{\ensuremath{#1\stackrel{#2}
 {\longrightarrow}#3\stackrel{#4}{\longrightarrow}#5
  \stackrel{#6}{\longrightarrow}#7}}
\newcommand{\br}[1]{\ensuremath{\left\{ #1 \right\}}}
\newcommand{\vbr}[1]{\ensuremath{\left\langle#1\right\rangle}}
\newcommand{\cvee}[3]{\displaystyle\bigvee^{#2}_{#1}#3}
\newcommand{\cprod}[3]{\displaystyle\prod^{#2}_{#1}#3}
\newcommand{\csum}[3]{\displaystyle\sum^{#2}_{#1}#3}
\newcommand{\cset}[2]{\br{#1\,\,\middle\vert\,\,#2}}
\newcommand{\cgen}[2]{\vbr{#1\,\,\middle\vert\,\,#2}}
\newcommand{\mc}[1]{\ensuremath{\mathcal{#1}}}
\newcommand{\mb}[1]{\ensuremath{\mathbb{#1}}}
\newcommand{\mf}[1]{\ensuremath{\mathfrak{#1}}}
\newcommand{\bd}{\ensuremath{\partial}}
\newcommand{\sm}{\setminus}
\newcommand{\wcolon}{\ensuremath{\,\colon\,}}
\DeclareMathOperator{\Int}{int}
\begin{document}

\title{Topology of Frame Field Design for Hex Meshing}
\author{Piotr Beben}
\address{ITI - International Technegroup Ltd., 4 Carisbrooke Court, 
Anderson Road, Swavesey CB24 4UQ, United Kingdom} 
\email{pdbcas2@gmail.com}

\keywords{frame field, meshing} 

\begin{abstract} 
In the past decade frame fields have emerged as a promising approach for generating hexahedral meshes for CFD and CAE 
applications. One important problem asks for construction of a boundary-aligned frame field with prescribed singularity constraints 
over a volume that corresponds to a valid hexahedral mesh. We give a necessary and sufficient condition in terms of solutions to a 
system of monomial equations with variables in the binary octahedral group when a boundary frame field and singularity graph have 
been fixed. This is phrased with respect to general $CW$- decompositions of the volume, which allows some flexibility in simplifying 
these systems. Along the way we look at aspects of frame field design from an algebraic topological perspective, proving various results, 
some known, some new.
\end{abstract}

\maketitle

\section{Introduction}

Two and three dimensional meshes are used widely in engineering, mathematical physics, and computer science.
In computer graphics, CAD, CFD, and CAE they are used for representing geometry, or for solving problems related to heat transfer, 
fluid flow, and structural analysis via the finite element method. Built up as combinations of \emph{elements} such as triangles and 
quads, or tetrahedrons (tets), prisms, and hexahedrons (hexes), meshes subdivide geometry of interest such as the surface of an 
aircraft together with the air volume surrounding it, allowing discretization of numerical problems, for example modelling the airflow 
around an aircraft.

Depending on the application and context, there are various quality criteria that affect the speed and accuracy of simulations.
Quads are often preferred over triangles, and hexes preferred over prisms and tets. Distortion of elements is kept to a minimum, 
while orthogonal alignment with with the boundary is ensured. These criteria are sometimes taken relative to a metric sizing field,
which specifies alignment, size, and shape constraints on elements within a volume. For example, regions over an aircraft wing 
might prefer smaller elements that are elongated in the direction of airflow. Since a large number of smaller elements is expensive, 
larger elements sizes are often prefered where less accuracy is needed.
 
A completely automatic push-button 3D meshing algorithm that is able to meet any of these criteria has been considered a holy-grail 
of of finite element analysis for several decades. Though reliable algorithms for generating tetrahedral meshes exist, generating quality 
hex meshes with their inherent performance and accuracy advantages over tet meshes remains beyond reach. Alternatives 
to a good meshing algorithm may involve spending a great deal of effort manually constructing or modifying meshes that consist of 
millions of elements, settling for poor meshes that yield substandard output and long simulation times, or resorting to expensive tests 
on physical models. One poignant example is fan blade failure tests on jet engine turbines, costing millions of dollars for each design 
tested.

\begin{figure}[h]
    \centering
    \begin{subfigure}[h]{0.4\textwidth}
        \includegraphics[width=\textwidth]{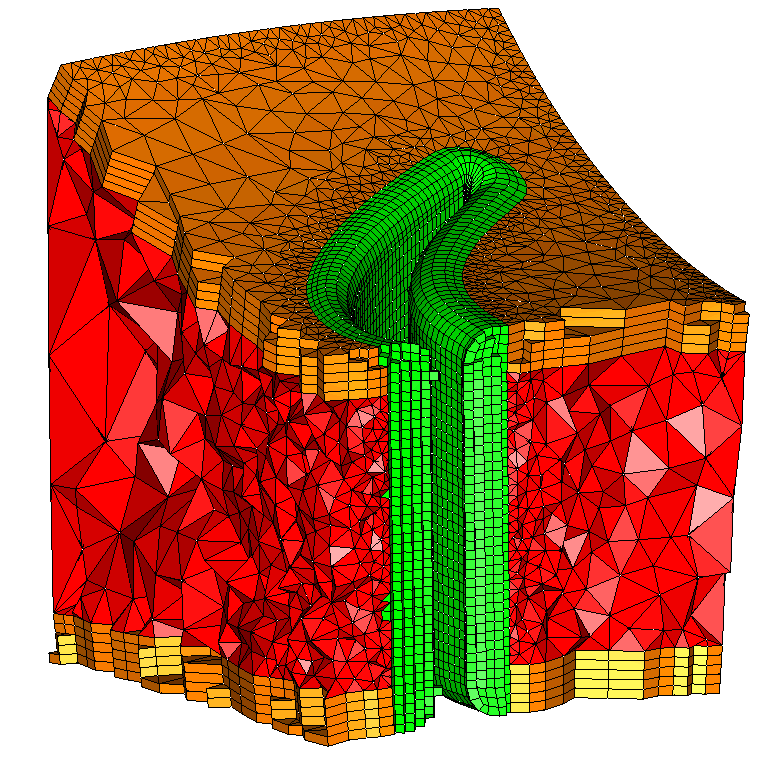}
    \end{subfigure}
    \begin{subfigure}[h]{0.4\textwidth}
        \includegraphics[width=\textwidth]{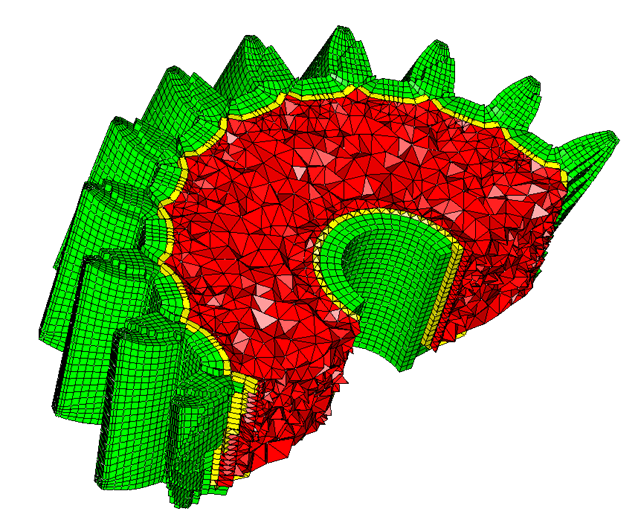}
    \end{subfigure}
    \caption{A $3$-dimensional boundary-aligned meshing of a gear and an air volume around a fan blade using hexes, prisms, and tets. 
	Smaller higher quality elements (hexes) are found near the boundary where more accuracy is needed.}\label{LF5}
\end{figure}

\subsection{Frame fields and meshing}

A $2$-\emph{frame field} (or \emph{cross field}) is a continuous assignment to a surface $M\sm\mc S$ (outside a set of singularities $\mc S$)
a field of orthgonal $2$-frames (crosses) where both axes are considered indistinguishable. Thus rotations of crosses by $90$ degrees return 
to the same cross. Similarly, in dimension three a $3$-frame field is a continuous assignment to a volume $M\sm\mc S$ a field of orthogonal 
$3$-frames where each of the $3$ orthogonal axes are indistinguishable. In both dimensions they are typically constrained to be aligned orthogonal
to the boundary of the volume or surface. Singularities for boundary aligned $2$-frame fields can be taken as isolated points, but this is not the case 
for $3$-frame fields since they sometimes (necessarily) form continuous subspaces homeomorphic to graphs.

The connection of frame fields to quad/hex meshing can be seen by filling a volume with individual elements. Large elements usually 
fill a volume poorly, leaving behind large irregular gaps that cannot be filled. Mitigating this by choosing smaller elements does not solve 
our problem since a mesh with so many elements is wasteful, if not infeasible. But it does lead to an insight: where corners of quads 
or hexes meet more-or-less orthogonally, we can assign frames or crosses or frames by alligning with their boundaries. The result
is a discrete field of frames over our body $M$ that approaches a continuous frame field on $M$ as elements are shrunk -- defined 
outside singularities where the orthogonal constraints are not broken. In this way, an infinitesimally fine mesh can be regarded as a continuous 
frame field.

Once a continuous frame field has been defined, there are several methods for reversing this process to obtain a finite boundary-aligned 
quad or hex meshing. For example, the CubeCover algorithm~\cite{Nieser2011} computes a volume parameterization that best agrees with a discrete 
boundary-aligned frame field generated over the volume, from which a hex mesh can be obtained by extracting iso-surfaces of the parameterization 
to form the faces of every hex. Another approach known as integer-grid maps~\cite{Liu2018} yields a hex mesh directly by pulling 
back a hexahedral grid back to align with a frame field. In either case a frame field should first be constructed that is compatible with a hex-only mesh, 
meaning a subspace of singularities must be carefully chosen together with frames appropriately aligned on the boundary and around
the singularities to yield a valid hex mesh. Both these approaches extend 2D quad meshing techniques such as QuadCover~\cite{Nieser2007, Bommes2013}. 
A somewhat different approach~\cite{Fogg2015, Myles2014, Kowalski2014} involves tracing cross field aligned lines from singularities and boundary 
corners to produce the perimeters of boundary-aligned quads subdividing a surface, each of which can easily be meshed. 

This is not an exhaustive list of techniques; other methods for extracting meshes from frame fields can be found in~\cite{Tarini2011, 
Lyon2016, Gao2017, Papadimitrakis, Pietroni2019}. The computational probem of generating frame fields suitable for hex meshing is also 
well-studied~\cite{
Crane2010, Huang2011, Huang2012, Li2012, Ray2016, Solomon2017, Viertel2018, Chemin2019, Crane2019, Golovaty2020, Palmer2020}, 
while theoretical and mathematical aspects (geometrical and topological, differential and algebraic) have been studied in~\cite{
Bunin2007, Ray2008, Nieser2011, Li2012, Fogg2017, Beaufort2017, Liu2018, Crane2019, Palmer2020}. We will mention any overlap 
with these works as we go along.

\begin{figure}[h]
    \centering
    \begin{subfigure}[h]{0.55\textwidth}
        \includegraphics[width=\textwidth]{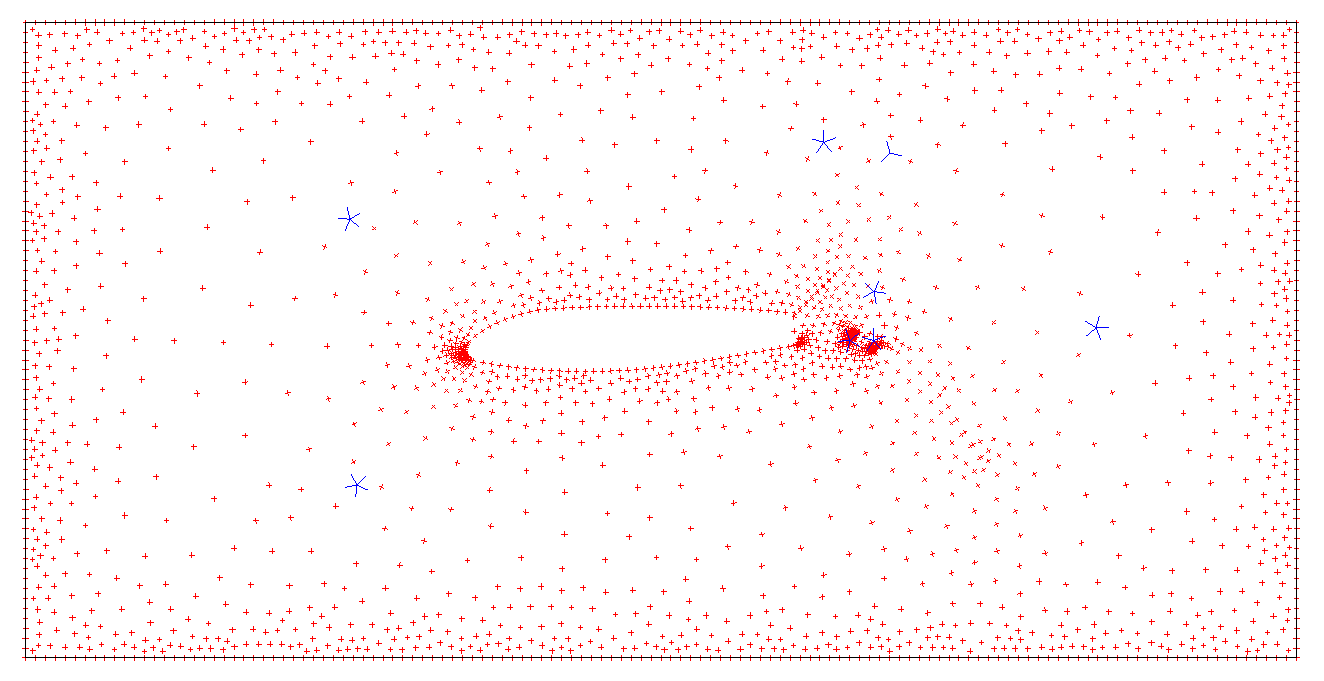}
    \end{subfigure}
    \begin{subfigure}[h]{0.55\textwidth}
        \includegraphics[width=\textwidth]{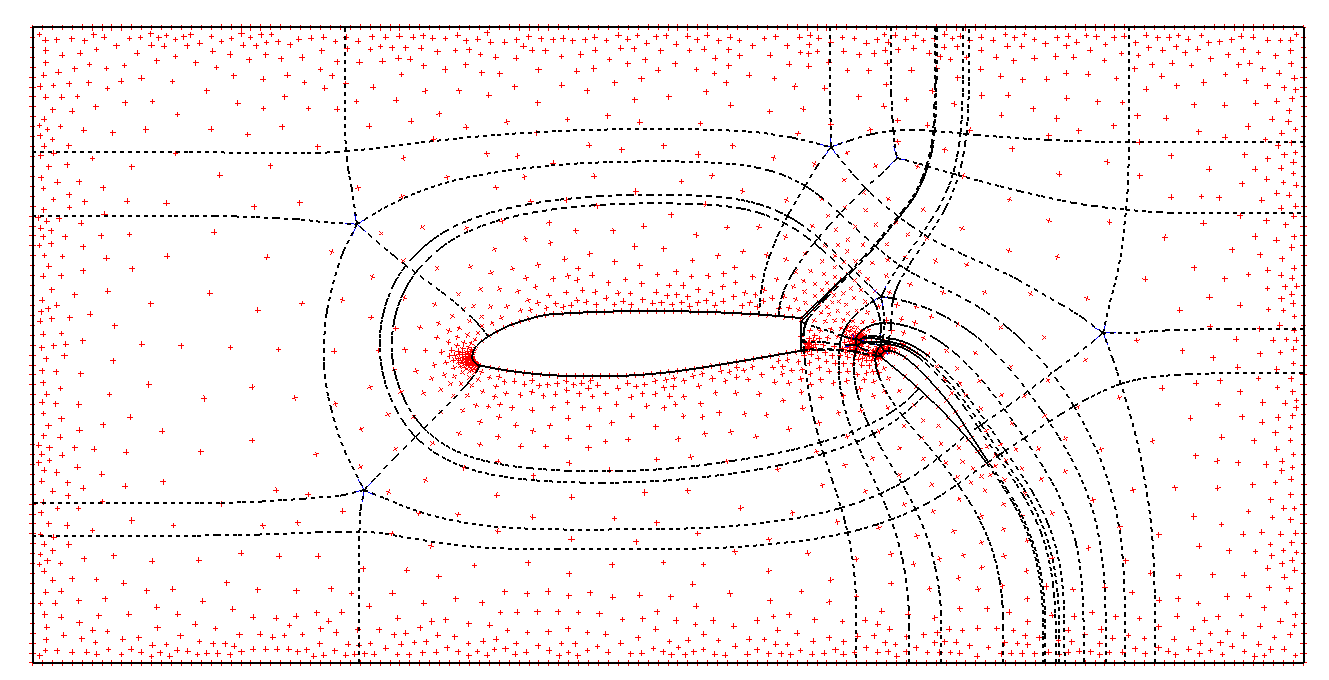}
    \end{subfigure}
    \begin{subfigure}[h]{0.55\textwidth}
        \includegraphics[width=\textwidth]{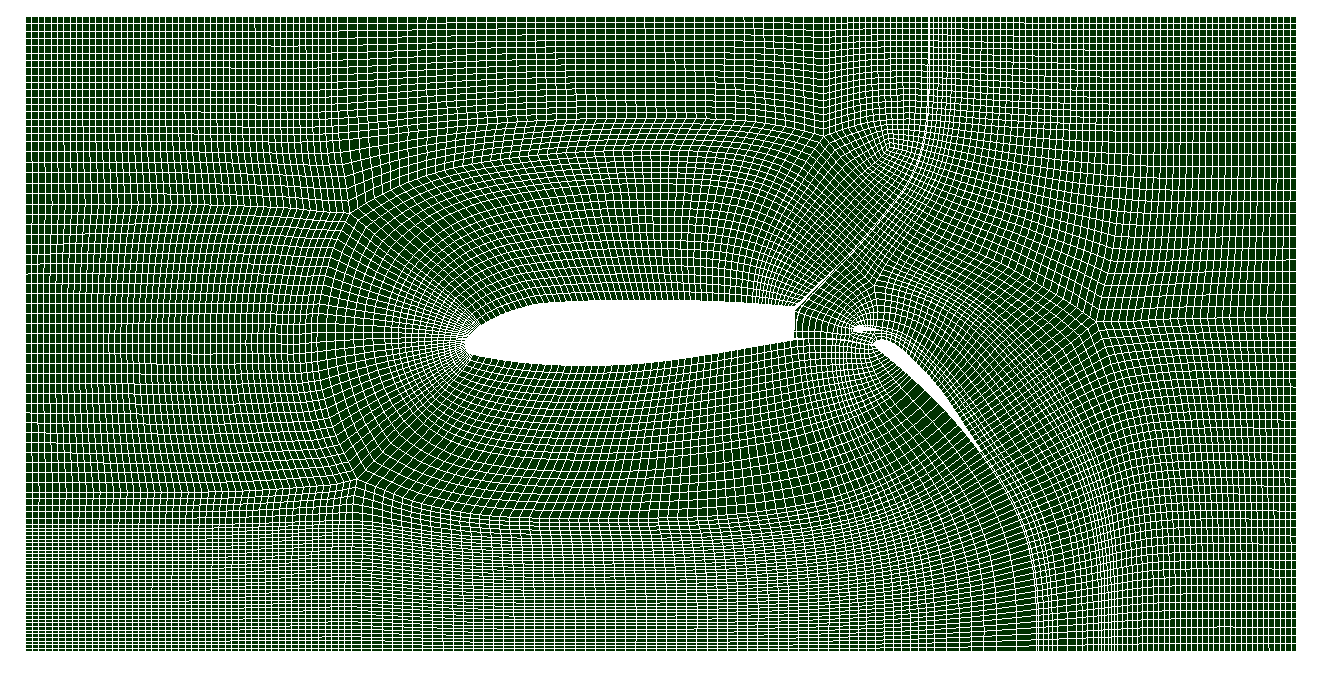}
    \end{subfigure}
    \caption{A 2D cross section of air volume around an aircraft wing and flap with a boundary aligned 
		discretized cross field (red) with singularities (blue) defined over a triangulation (invisible). 
		\emph{Separatrix} lines are traced following the cross field from singularities and corners 
		depending on their \emph{index} to give a quad-dominant partition of the surface. 
		A finer 2D quad meshing is obtained by meshing each quad region in the partition, after
                 balancing line divisions across shared lines.} 
\end{figure}

\begin{figure}[h]
    \centering
    \includegraphics[width=0.65\textwidth]{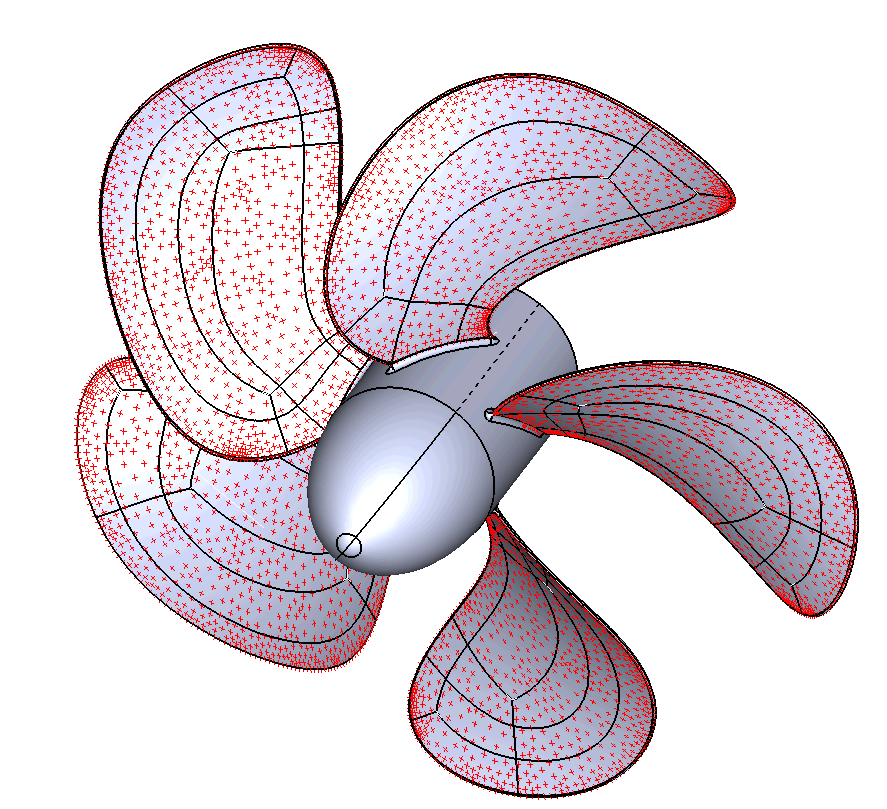}
    \caption{A cross field-generated subdivision on a series of propeller blades. Any boundary aligned 
                  $3$-frame fiield generated over the volume of the propeller restricts to a cross field ($2$-frame field)
                  on the boundary like the one present here by forgetting axes aligned with surface normals.}
\end{figure}

\begin{figure}[h]
    \centering
    \begin{subfigure}[h]{0.4\textwidth}
        \includegraphics[width=\textwidth]{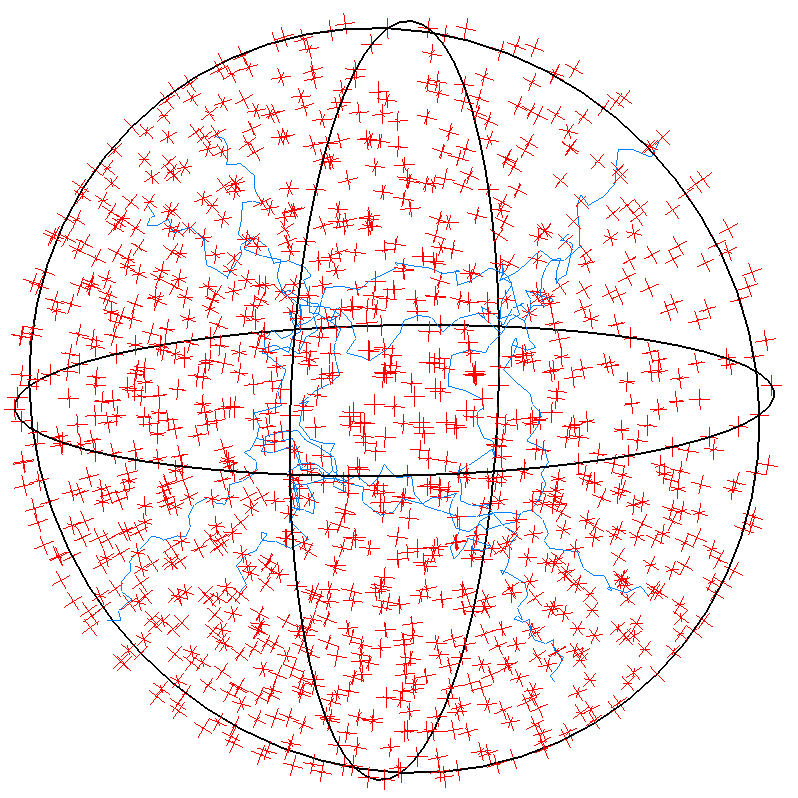}
    \end{subfigure}
    \begin{subfigure}[h]{0.4\textwidth}
        \includegraphics[width=\textwidth]{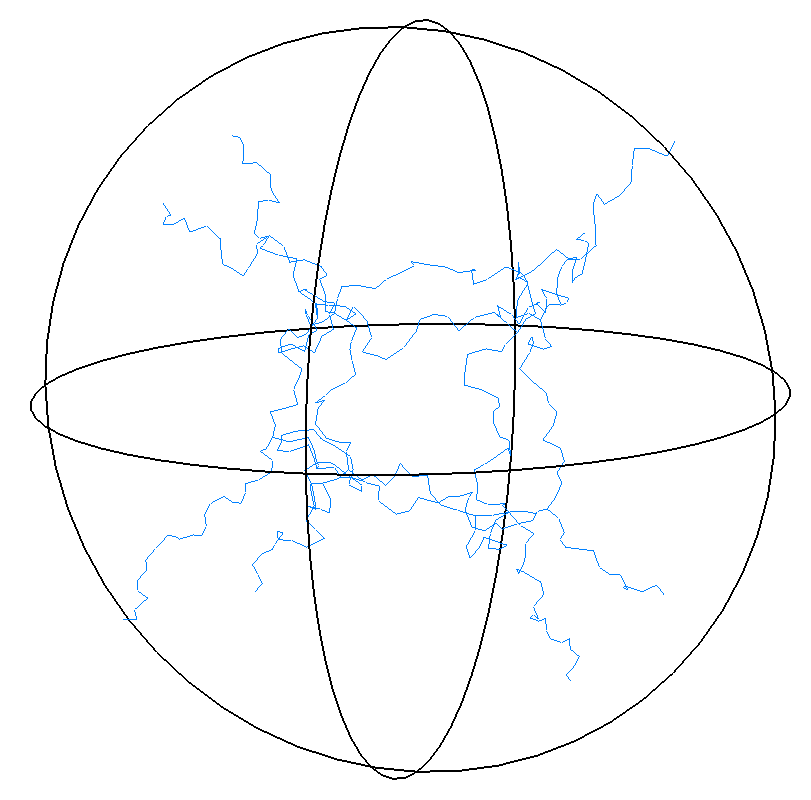}
    \end{subfigure}
    \caption{A boundary-aligned $3$-frame field and its singularity graph generated piecewise-linearly over a 
	       $3$-ball by interpolating over a tet subdivision.}
\end{figure}

\section{Abstraction of Frame Fields}

\subsection{General definitions}

Let $M$ be a Riemannian $n$-manifold, $TM$ and $SM$ be the tangent and unit tangent bundles of $M$, 
$TM_x\cong \mb R^n$ and $SM_x\cong S^{n-1}$ be their restrictions at a point $x\in M$. 
Consider the \emph{orthogonal frame bundle} 
$$
\rho\wcolon\seqm{V_m(M)}{}{M} 
$$
induced by the tangent space $TM$. 
Here $V_m(M)$ consists is the subspace of direct sum of bundles $SM^{\oplus m}$, 
consisting for each $x\in M$ of $(v_1,\ldots,v_m)$ such that the $v_i$ are mutually orthogonal in $SM_x$. 
Each fiber $\rho^{-1}(x)$ is the Stiefel manifold $\mc V_{n,m}$ of orthonormal $m$-frames in $\mb R^n$
(orthonormal $(n\times m)$-matrices, the column vectors forming the directed axis of the frame).

Let $\mc B_m$ denote the \emph{full hyperoctahedral group} consisting of symmetries of an $m$-dimensional hypercube. 
As a matrix group, $\mc B_m$ is the subgroup of the orthogonal group $O(m)=\mc V_{m,m}$ consisting of the 
$2^m m!$ different matrices obtained by permuting and reflecting columns in the $(m\times m)$-identity matrix.
The right $\mc B_m$ action on $\mc V_{n,m}$ by right matrix multiplication is the free action that permutes and reflects 
column vectors, and extends to a fiber preserving action on the bundle $V_m(M)$. 
Taking orbit spaces, $\rho$ quotients onto a $\mc V_{n,m}/\mc B_m$-bundle 
$$
\rho'\wcolon\seqm{V_m(M)/\mc B_m}{}{M}.
$$
Notice the orbit space $V_m(M)/\mc B_m$ is the quotient $V_m(M)/\sim$ under the identifications 
$$
(v_1,\ldots,v_m)\sim (\lambda_1 v_{\sigma(1)},\ldots,\lambda_m v_{\sigma(m)})
$$
for each permutation $\sigma$ in the symmetric group $S_m$ and $\lambda_i\in \mb R\sm\{0\}$. 
We may think of $V_m(M)/\mc B_m$ ia the space of frames consisting of $m$ orthogonal axis in which 
there is no sense of direction and ordering (labelling) of the axis. Thus, any reflection of a frame along an axis 
leads back to the same frame, as does any rotation that has the same image.  

\begin{definition}
A \emph{directed} $m$-\emph{frame} \emph{field} on $M$ with set of \emph{singularities} $\mc S\subseteq M$ is a 
continuous section \seqm{M\sm\mc S}{s}{V_m(M)} so that \seqmm{M\sm\mc S}{s}{V_m(M)}{\rho}{M} is the inclusion. 
\end{definition}

We say it is \emph{nowhere vanishing} if $\mc S=\emptyset$. 
\emph{Isolated singularities} in $\mc S$ are the dimension $0$ singularities that are not limit points
of other singularities. 

\begin{definition}
Similarly, an (\emph{undirected}) $m$-\emph{frame} \emph{field} is a continuous section 
\seqm{M\sm\mc S}{s}{V_m(M)/\mc B_m} so that \seqmm{M\sm\mc S}{s}{V_m(M)/\mc B_m}{\rho'}{M} is the inclusion. 
\end{definition}

Every directed frame field induces an undirected one by forgetting ordering and direction of axes.
Our focus is one undirected frame fields, which we refer to simply as \emph{frame fields}.
Undirected frame fields (as opposed to directed) are used for the purpose of meshing 
since there is generally no sense of direction in any hex or quad subdivision.  
Moreover, an directed frame field induces and undirected one by forgetting ordering and direction, 
but the reverse is not true. In this way undirected frame fields are more flexible.

\subsection{Over trivial tangent bundles}

Suppose the tangent bundle $TM$ restricts to a trivial sub-bundle $T(M\sm\mc S)\cong (M\sm\mc S)\cong \mb R^n$ 
over $M\sm\mc S$. This is equivalent to $M\sm\mc S$ having $n$ linearly independent vector fields (being \emph{parallelizable}).
In this case the trivialization defines trivializations of the sub-bundles
$$
V_m(M\sm\mc S)\,\cong\, (M\sm\mc S)\times \mc V_{n,m}
$$
and
$$
V_m(M\sm\mc S)/\mc B_m\,\cong\, (M\sm\mc S)\times  \mc V_{n,m}/\mc B_m
$$
such that the projection maps $\rho$ and $\rho'$ restrict to the identity on the left factors $M\sm\mc S$.
Directed and undirected $m$-frame field can then be regarded as continuous maps 
$$
\seqm{M\sm\mc S}{}{ \mc V_{n,m}} 
$$
and 
$$
\seqm{M\sm\mc S}{}{ \mc V_{n,m}/\mc B_m}.
$$
A general setting where this happens (for $m=n$) is when $M$ is a compact $n$-manifold smoothly embedded 
in the same dimension $\mb R^n$; a trivialization of its tangent bundle is obtained from that of $\mb R^n$, 
and an $n$-frame field is simply a continuous map
$$
f\wcolon\seqm{M\sm\mc S}{}{O(n)/\mc B_n}.
$$

Write $O(n)$ as the disjoint union
$$
O(n)=SO(n)\sqcup SO^{-}(n)
$$ 
of subgroups of positive and negative determinant matrices.
Let $\mc D_n$ (the \emph{rotation octahedral group}) denote the subgroup of $\mc B_n$ consisting of only those matrices 
with positive determinant. Namely, this is the group of rotational symmetries of an orthonormal $n$-frame 
(or $n$-hypercube). The action of $\mc B_n$ on $O(n)$ restricts to an action of $\mc D_n$ on the subgroup $SO(n)$, 
and the inclusion \seqm{SO(n)}{}{O(n)} maps to a homeomorphism
\begin{equation}
\label{ESOn}
SO(n)/\mc D_n\,\cong\, O(n)/\mc B_n.
\end{equation}
Thus, we may regard the $n$-frame fields $f$ above as maps 
$$
f\colon\seqm{M\sm\mc S}{}{SO(n)/\mc D_n}.
$$
The standard embedding of $SO(n-1)$ as a subgroup of $SO(n)$ gives an embedding of $\mc D_{n-1}$ as a subgroup of $\mc D_n$.
Namely, $\mc D_{n-1}$ consists of those (positive determinant) $(n\times n)$-matrices in $\mc D_n$ whose first row vector 
and first column vector (directed axis) are both $(1,0,\ldots,0)$. 
Then $\mc D_{n-1}$ acts on $SO(n)$ by permuting and reflecting only the column vectors $v_i$ for $i>1$ of a frame, 
leaving the first fixed, and the covering map \seqm{SO(n)}{p}{SO(n)/\mc D_n} factors as
$$
p\wcolon\seqmm{SO(n)}{\tilde p}{SO(n)/\mc D_{n-1}}{p_2}{SO(n)/\mc D_n}.
$$
through the \emph{partial quotient} orbit space $SO(n)/\mc D_{n-1}$. The orbit space quotient maps $p$ and $\tilde p$
are covering maps since $\mc D_i$ is finite and acts freely on the total space.
%namely, for a sufficiently small neighbourhood $U_x$ of $x\in SO(n)$, $g_1(U_x)\cap g_2(U_x)=\emptyset$ if $g_1\ne g_2$ 
%in $\mc D_i$.
The map $p_2$ sends a representative $(v_1,\ldots,v_m)$ to itself in $SO(n)/\mc D_n$, which is well defined since 
$\mc D_{n-1}$ as a subgroup of $\mc D_n$. We think of $p_2$ as forgetting the direction and ordering of the
distinguished axis $v_1$ in each frame, making it indistinguishable from the other axis.  To simplify notation, we will denote
$$
\widetilde{\mc O}_n\,\colon=\, SO(n)/\mc D_{n-1}.
$$
and
$$
\mc O_n \,\colon=\, SO(n)/\mc D_n \,\cong\, O(n)/\mc B_n
$$
for both $SO(n)/\mc D_n$ and $O(n)/\mc B_n$ by the above homeomorphism. 

A boundary-aligned $3$-frame field \seqm{M\sm\mc S}{}{\mc O_3} lifts to a map \seqm{\bd M\sm\mc S}{}{\widetilde{\mc O}_3}
on its boundary. More generally, a $2$-frame field on a smooth embedded surface in $\mb R^3$ can be regarded 
as such a map. 

In studying maps between topological spaces (such as $f$), it is often useful to compute topological invariants for the spaces 
in question. We start by looking at the homotopy and (co)homology of $SO(n)/\mc D_m$, mostly when $n=3$.
Since most of the spaces we deal with will be path connected, the choice of basepoint for homotopy groups is irrelevant, 
so we usually make no mention of it. There is no work to be done for the case $n=2$ since $O(2)/\mc B_2\cong S^1$.

\section{Topology of $SO(n)/\mc D_m$}

\subsection{Homotopy groups}

Take the universal cover \seqm{Spin(n)}{q}{SO(n)}. Its fiber is $\mb Z_2$;
when $n=3$, $Spin(3)\cong S^3$, and $q$ is homeomorphic to the universal double cover 
\seqm{S^3}{q}{SO(3)} mapping unit quaternions to 3D rotations
(or the standard universal double cover \seqm{S^3}{q}{\mb RP^3\cong SO(3)}). 
The preimage $q^{-1}(\mc D_n)\subseteq Spin(n)$ of the octahedral subgroup $\mc D_n\subseteq SO(n)$ 
is known as  the \emph{binary hyperoctahedral} group, denoted $2\mc D_n$. This exists in a (non-split) 
exact sequence  $$1\mapsto \mb Z_2\mapsto 2\mc D_n\mapsto \mc D_n\mapsto 1$$
with the abelianization $(2\mc D_n)_{ab}$ isomorphic to $\mb Z_2$. 
When $n=3$, $2\mc D_3$ is an order $48$ subgroup of $S^3$, with presentation 
$$
\cgen{r,s,t}{r^2=s^3=rst}
$$
with unit quaternion generators $r := \frac{1}{\sqrt{2}}(i+j)$, $s := \frac{1}{2}(1+i+j+k)$, 
$t := \frac{1}{\sqrt{2}}(1+i)$ in $S^3$ ($rst=-1$).

The orbit quotient map \seqm{SO(n)}{p}{\mc O_n} is a covering map since $\mc D_n$ is finite and 
acts freely on $SO(n)$. Likewise, finiteness of the fibers $p^{-1}(x)=\mc D_n$ implies
the composition $\seqm{Spin(n)}{p\circ q}{\mc O_n}$ of coverings $q$ and $p$ is a covering also.
This is a universal covering since $Spin(n)$ is simply connected, with fiber 
$q^{-1}\circ p^{-1}(x)=q^{-1}(\mc D_n) = 2\mc D_n$. Thus
\begin{equation}
\label{EPi1}
\pi_1(\mc O_n)\,\cong\, 2\mc D_n.
\end{equation}
(A similar computation of the fundamental group in the context of frame fields has been given recently 
in~\cite{Palmer2020}.)

As for higher homotopy groups, the covering $p$ is a fiber bundle since $\mc O_n$ is connected.
Then the homotopy long exact sequence for $p$ implies 
\begin{equation}
\label{EPin}
\pi_i(\mc O_n)\cong \pi_i(SO(n))\quad\mbox{for }i>1
\end{equation}
since the fibre $\mc D_n$ of $p$ is $0$-dimensional. In particular, since $\pi_2(SO(3))=0$~\cite{MimuraToda}, 
\begin{equation}
\label{EZero}
\pi_2(\mc O_3)=0
\end{equation}
The second homotopy group being zero will be crucial later (we could have also used the covering 
\seqm{S^3}{}{SO(3)} and $\pi_2(S^3)=0$).

\subsection{$\mb Z_2$-Cohomology}
We give a quick proof for the case $n=3$ using Poincar\'e duality and the above computation of the 
fundamental group. Write $H^i=H^2(\mc O_3;\mb Z_2)$ for the $\mb Z_2$-cohomology. 
Note that $\mc O_3$ is a $3$-manifold. Then $H^3\cong \mb Z_2$ and $H^2\cong H^1$. 
Moreover, for degree $1$ integral homology, the Hurewicz theorem implies 
$H_1(\mc O_3)\cong\mb Z_2$ since it is the abelianization of $\pi_1(\mc O_3)\cong 2\mc D_3$. 
Then $H^1\cong (2\mc D_3)\cong\mb Z_2$ by the universal coefficient theorem. 
The cup product structure can now easily be read off from Poincar\'e duality, and we obtain the ring
\begin{equation}
\label{ECohom}
H^*(\mc O_3;\mb Z_2)\,\cong\, \mb Z_2[\alpha]/\alpha^4
\end{equation}
where $|\alpha|=1$. This is the same cohomology ring as $H^*(SO(3);\mb Z_2)$. 
In fact, we see that \seqm{SO(3)}{q}{\mc O_3} induces a ring isomorphism
on $H^*(;\mb Z_2)$ since it is an isomorphism on $H^1(;\mb Z_2)$,
which follows by the Hurewicz theorem and homotopy long exact sequence for the covering $q$,
and because the abelianization of $\mb Z_2\mapsto 2\mc D_3$ is an automorphism of $\mb Z_2$.
An alternate computation agreeing with the one here can found in~\cite{Satoshi} (among other interesting 
orbit spaces considered there).

\subsection{Partial quotients}
Notice $\mc D_2$ acts fiber-wise on the standard $S^1=SO(2)$-bundle \seqm{SO(3)}{\rho}{S^2}, 
thus it quotients to an $S^1\cong \mc O_2$-bundle \seqm{\widetilde{\mc O}_3}{\tilde\rho}{S^2}.
Namely, we have a commutative diagram of fiber bundle sequences 
\begin{equation}
\label{EFib}
\diagram
S^1 \cong SO(2)\rto^{}\dto^{\tilde p} & SO(3) \rto^{\rho}\dto^{\tilde p} & S^2\ddouble\\
S^1 \cong \mc O_2\rto^{} & \widetilde{\mc O}_3 \rto^{\tilde\rho} & S^2,
\enddiagram
\end{equation}
where $\rho$ and $\tilde \rho$ are given on each frame by projecting onto the first (in each case distinguished) unit column vector,
and $\tilde p$ are the orbit space quotients, with the map of fibers \seqm{SO(2)}{\tilde p}{\mc O_2} homeomorphic to multiplication 
\seqm{S^1}{4}{S^1} by $4$. 
Since $\pi_1(SO(n))\cong \mb Z_2$, the boundary map \seqm{\pi_2(S^2)}{\bd}{\pi_1(S^1)}  
in the homotopy long exact sequence for the top fiber bundle is \seqm{\mb{Z}}{2}{\mb{Z}}. 
The morphism of homotopy long exact sequences coming from this diagram gives the 
boundary map  \seqm{\pi_2(S^2)}{\tilde\bd}{\pi_1(S^1)} for the bottom sequence as \seqm{\mb{Z}}{8}{\mb{Z}}, and so
\begin{equation}
\label{Ehomo}
\pi_1( \widetilde{\mc O}_3)\cong \mb Z_8.
\end{equation}
Lastly, 
\begin{equation}
\label{Ehomo2}
\pi_i(\widetilde{\mc O}_n)\cong \pi_i(SO(n))\mbox{ for }i>1 
\end{equation}
by the covering $\tilde p$.

Homology can also determined from this diagram. 
The Leray-Serre spectral sequence for the top bundle is well known, and easy to compute~\cite{MimuraToda}. 
Its first and only transgression is given by $\bd$ as multiplication by $2$. 
The Leray-Serre spectral sequence for the bottom bundle can then be computed from the top 
by spectral sequence comparison applied to diagram~\ref{EFib} and the boundary map $\tilde\bd$. 
From this, one obtains an isomorphism of integral homology
\begin{equation}
\label{EHom}
H_i(\widetilde{\mc O}_3) \,\cong\,
\begin{cases}
\mb Z & \mbox{if }i=0,3;\\
\mb Z_8 & \mbox{if }i=1;\\
0 & \mbox{otherwise},
\end{cases}
\end{equation}
with $(p_1)_*$ being a multiplication by $4$ on $H_i()$ for $i=3$ and $0$ otherwise for $i>0$.

\subsection{Back down to earth}

For practical applications of frame fields, the dimension $n$ is often $2$ and $3$, and one usually assumes the following

\begin{itemize}
\item \textbf{Smooth embedding:} $M$ is smoothly embedded in the same dimension $\mb R^n$. 
So directed and undirected $n$-frame fields are simply maps \seqm{M\sm\mc S}{}{O(n)} and \seqm{M\sm\mc S}{f}{\mc O_n}.
\item \textbf{Boundary alignment:} Frames are \emph{boundary-aligned} on $\bd M$ wherever they are defined. 
That is, if $M$ is a manifold without corners, one of the axes of each frame is normal to the tangent plane on the 
boundary where it is located. If there are corners, frames can interpolated and smoothed in their vicinity,
possibly breaking boundary alignment near them. For now we will assume there are no corners.
\end{itemize}

In particular, these conditions imply an $n$-frame field restricts on the boundary $\bd M$ to an $(n-1)$-frame field 
$$
\seqm{\bd M\sm\mc S}{}{\mc V_{n,n-1}/\mc B_{n-1}}
$$ 
by forgetting the axis normal to the boundary.

\subsection{Singularities}

The other aspect besides boundary alignment that affects the utility of frame fields is the pattern and quantity of singularities 
together with the alignment of frames around them. In a hex-only mesh they correspond to irregular regions of the mesh where 
hexes are distorted away from having orthogonal boundary faces, and where more or less than $4$ and $8$ hexes share edge 
and vertex corners, both of which can affect their performance in CAE applications. This is in the best case scenario however, 
since in the worst case there is no valid hex-only mesh corresponding to certain choices of singularities~\cite{Liu2018}, 
so they must be chosen carefully. For some very simple surfaces, any boundary-aligned $2$-frame field has at least one singularity:

\begin{example}
The $2$-disk $D^2\subseteq\mb R^2$ cannot have a boundary-aligned $2$-frame field 
\seqm{D^2}{f}{\mc O_2} with no singularities. 
Otherwise the composite $g\wcolon\seqmm{S^1}{\iota}{D^2}{f}{S^1}$, where $\iota$ is the
embedding of the boundary of $D^2$, is nullhomotopic since $D^2$ is contractible. 
But this is impossible since $g$ is a homeomorphism by boundary-alignment.
\end{example}

On more general compact surfaces there a guarantee that a boundary-aligned $2$-frame field can be taken with only 
isolated point singularities, of which there are finitely many. For example, by triangulating the surface, assigning 
boundary-aligned crosses to its vertices, then interpolating crosses onto the interior of edges, and finally interpolating 
onto triangle interiors by shrinking triangle boundaries to leave a singularity at the centroid. 
Moreover, if we don't place any constraints on the \emph{index} of singularities, then at most one point singularity 
is necessary (as we will see later). This is similar to the situation for boundary-aligned vector fields on smooth compact 
manifolds, where a vector field exists with a single isolated point singularity lying in the interior.
In contrast, there are manifolds on which boundary aligned $3$-frame fields must have continuous subspaces of singularities 
that are not even contractible:

\begin{example}
\label{ENoncontract}
Take $n=3$, $M$ to be the standard solid torus $D^2\times S^1$ embedded in $\mb R^3$,
Let \seqm{M\sm\mc S}{f}{\mc O_3} be any undirected boundary aligned $3$-frame field
with the following boundary constraint: the tangential axes of each frame on the boundary $\bd M=S^1\times S^1$ 
is each aligned with one of the coordinate circles $S^1$. 
Assume $\mc S$ in the interior of $M$, has countably many connected components. 
Then at least one connected component of $\mc S$ is not contractible as follows. 

Notice any embedding \seqm{S^1}{\iota}{M\sm\mc S} into a coordinate circle $S^1\times \{x\}\subseteq \bd M$
is not nullhomotopic. To see this, notice by our assumed frame alignment $f\circ\iota$ lifts through 
\seqm{SO(3)}{p}{\mc O_3} to a map \seqm{S^1}{\ell}{SO(3)} such that $\ell$ is the standard inclusion 
of a fiber $S^1$ in the fiber bundle \seqmm{S^1}{\ell}{SO(3)}{\rho}{S^2}.
But $\ell$ is not nullhomotopic since it induces a surjection onto $\pi_1(SO(3))\cong \mb Z_2$ by the 
homotopy long exact sequence for this bundle. Then $\iota=p\circ\ell$ also cannot be nullhomotopic since 
$[\ell]\in\pi_1(SO(3))$, and since $p$ as a covering map must induce an injection on $\pi_1()$ .
 
Now suppose all connected components of $\mc S$ are contractible. 
Then by Alexander duality for manifolds, $M\sm\mc S$ has the homology
of a bouquet of a circle and a countable wedge of $2$-spheres, 
and so $M\sm\mc S$ is homotopy equivalent to such a bouquet.
Moreover, the homology generator of the circle in the bouquet corresponds to the generator of the 
second coordinate circle in the product $M=D^2\times S^1$, and thus is mapped trivially by $\iota_*$.   
But in this case homology detects the homotopy class of the map 
\seqm{S^1}{\iota}{M\sm\mc S\simeq S^1\vee\bigvee S^2} into this bouquet, 
and so $\iota$ is nullhomotopic since it induces a trivial map on homology, a contradiction. 
Thus at least one component must not be contractible, and in particular, cannot be an isolated point.  
\end{example}

\begin{definition}
A \emph{singularity graph} $G$ of a frame field $f$ with singularity set $\mc S$ 
is a subspace $G\subseteq \mc S$ that is homeomorphic to a topological graph,
and is smoothly embedded in $M$ on the interiors of it edges.
%It is \emph{isolated} if it has a neighbourhood containing no other singularities. 
The (closed) edges of the graph are then smooth submanifolds homeomorphic to $[0,1]$, 
containing vertices at endpoints $0$ and $1$, and \emph{open} and \emph{half-open} 
edges are respecitvely the subintervals $(0,1)$ and $(0,1]$ with both vertices removed
and at least one vertex removed. A \emph{branch} is an edge with at least one vertex 
that is a \emph{leaf} vertex (incident to only one edge), and all other vertices removed. 
Thus it is either closed or half open.
\end{definition}

If all connected subspaces $G\subseteq\mc S$ are  graphs, we refer to $\mc S$ itself as a singularity graph.
While boundary-aligned $2$-frame fields on surfaces have isolated point singularities,  
singularities on boundary aligned $3$-frame fields on $3$-manifolds are typically graphs.   
For example, if we triangulate a manifold and assigning boundary-aligned frames to vertices,
then interpolate from vertices to interiors of edges, then to interiors of triangles, and finally to interiors of each tetrahedron. 
The last step involves interpolating frames on each tetrahedron boundary into the interior.
This causes singularity lines to be traced out towards the centroid starting from point singularities on boundary triangles,
which results in a singularity graph. There are several additional nice properties in this construction: 
(1) there are no isolated point singularities; (2) there are no branches in the graph that are in the interior of $M$; 
(3) the interiors of all edges lie in the interior of $M$, with only endpoints lying on $\bd M$, all of which are leaf vertices;  
(4) thus the $3$-frame field restricts to a $2$-frame field on $\bd M$ with only isolated point singularities.

This still leaves open whether any $3$-frame field with singularity graph $\mc S$ can be modified locally
so as to satisfy properties (1)-(4). We mean this in the following sense.
%Despite Example~\ref{ENoncontract}, 
%solated point singularities are often unnecessary when $n=3$. This is in the following sense. 
Let respectively $\bd B$ and $\Int(B)=B\sm\bd B$ denote boundary and interior of $B\subseteq M$ as a subspace of 
$\mb R^n$, and $\bd_M B$ and $\Int_M(B)=B\sm\bd_M B$ the boundary and interior of $B$ as a subspace of $M$. 

\begin{definition}
Write $N=M\sm\mc S$. Define subspace of singularities $U\subseteq\mc S$ or an $n$-frame field \seqm{N}{f}{\mc O_n} 
to be \emph{redundant} if there exists a closed subspace $B\subseteq M$ and an $n$-frame field defined on $\Int_M(B)$
$$
f'\wcolon\seqm{(N\cup \Int_M(B))}{}{\mc O_n}
$$  
such that $U\subset \Int_M(B)$, and $f$ agrees with $f'$ on $N\sm \Int_M(B)$. 
If $f$ is boundary aligned, we require $f'$ to be boundary aligned as well.
\end{definition}

\begin{proposition}
\label{PRedundant}
Suppose a singularity subspace $P\subseteq\mc S$ is contained in $\Int(B)$ of a closed $n$-ball $B\subseteq M$, 
and $\bd B$ contains at most one point singularity of $\mc S$. Then $P$ is redundant.
\end{proposition}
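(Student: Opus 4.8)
The plan is to extend the frame field $f$ across the interior of the ball $B$, one cell at a time, where the only obstruction we must worry about is a single point singularity on $\bd B$. Since $B$ is a closed $n$-ball and $\mc O_n$ is path connected, the first step is to observe that if $\bd B$ contained \emph{no} singularity of $\mc S$, then $f$ would restrict to a map $\seqm{\bd B}{}{\mc O_n}$; for $n=3$ this map is nullhomotopic because $\pi_2(\mc O_3)=0$ by \eqref{EZero} (equivalently $\pi_2(SO(3))=0$), so it extends over all of $B\cong D^3$, and taking $f'$ to be this extension on $\Int_M(B)$ and $f$ elsewhere shows $P$ is redundant. (The boundary-alignment requirement plays no role here because $B$ is an $n$-ball inside $M$ of full dimension, so $\Int_M(B)$ lies in the interior of $M$ and meets no part of $\bd M$; one should remark that such a $B$ can always be taken, or else reduce to this case by shrinking $B$ slightly.)

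The actual statement allows one point singularity $x_0\in\bd B$, so the second step is to remove it. Choose a small closed $n$-ball $B_0\subseteq B$ with $x_0\in\Int_M(B_0)$ and $B_0$ meeting $\bd B$ in an $(n-1)$-ball (a "collar cap"), chosen small enough that $B_0\cap\mc S = B_0\cap(\text{the component of }\mc S\text{ through }x_0)$ and so that $\bd B_0\sm\bd B$ — the part of $\partial B_0$ interior to $B$ — is an $(n-1)$-disk disjoint from $\mc S$. Then $B'\colon= \cl(B\sm B_0)$ is again a closed $n$-ball whose boundary $\bd B'$ is disjoint from $\mc S$: it is made up of the part of $\bd B$ outside $B_0$ together with the disk $\bd B_0\sm\bd B$. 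Now $f$ restricts to $\seqm{\bd B'}{}{\mc O_3}$, which is nullhomotopic as before and hence extends to $\seqm{B'}{}{\mc O_3}$. Setting $f' = f$ on $N$ and $f'$ equal to this extension on $\Int_M(B')$ gives a frame field on $N\cup\Int_M(B')$; since $P\subseteq\Int(B)$ and $P$ is disjoint from the component of $\mc S$ through $x_0$ — wait, this need not hold if $P$ shares a component with $x_0$.

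To handle that gap cleanly I would instead argue as follows: enlarge the target of the extension to all of $\Int_M(B)$ at once. Take $B_0$ as above and first extend $f$ over $\Int_M(B')$ where $B' = \cl(B\sm B_0)$ is the ball with $\bd B'\cap\mc S=\emptyset$; this is possible by $\pi_2(\mc O_3)=0$. The resulting field $f''$ is now defined and singularity-free on $\Int_M(B')$, and in particular on the disk $D\colon=\bd B_0\sm\bd B\subseteq\Int_M(B')$. Finally extend $f''$ over $\Int_M(B_0)\cong\Int_M(D^3)$: we have $f''$ defined on $B_0\cap N$, which after the previous step is $B_0$ minus (a neighborhood in $B_0$ of) the singular arc through $x_0$; but $B_0$ is a ball, its intersection with $\mc S$ is a single unknotted arc meeting $\bd B_0$ only at $x_0\in\bd B$, so $B_0\sm(\text{that arc})$ deformation retracts onto $D$, and $f''|_D$ extends over this retract, hence over $\Int_M(B_0)\sm\mc S \supseteq \Int_M(B_0)\cap N$ — and then, since $\pi_2(\mc O_3)=0$ lets us fill the remaining $D^3$, over all of $\Int_M(B_0)$. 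Patching $f$, $f''$, and this last extension gives $f'\wcolon\seqm{N\cup\Int_M(B)}{}{\mc O_3}$ agreeing with $f$ off $\Int_M(B)\supseteq P$, which is what redundancy requires.

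The main obstacle is the bookkeeping in the second and third paragraphs: verifying that $B_0$ can be chosen so that $B_0\cap\mc S$ is a single tame arc through $x_0$ (this uses that $x_0$ is the \emph{only} singularity on $\bd B$ together with local tameness of the graph near $x_0$, or a mild general-position assumption on how $\mc S$ meets $\bd B$), and that $B_0$ minus that arc retracts onto the free boundary disk $D$. Once these local structure facts are in hand, the homotopy-theoretic content is entirely carried by $\pi_2(\mc O_3)=0$, which is cheap. A remark should note the argument is stated for $n=3$ (and trivially holds for $n=2$, where $\mc O_2\cong S^1$ and a point singularity on $\bd B$ may be genuinely obstructing — so in fact for $n=2$ one needs $\bd B$ to contain \emph{no} singularity, and the "at most one point" hypothesis is specifically what $n=3$ buys us).
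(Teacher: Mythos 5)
Your first case ($\bd B\cap\mc S=\emptyset$) is exactly the paper's argument and is fine. The second case has a genuine gap. By the definition of redundancy, the new field $f'$ must be defined on \emph{all} of $N\cup\Int_M(B)$, i.e.\ it must be singularity-free on $\Int_M(B)$, including along whatever part of $\mc S$ passes through $x_0$ into $\Int(B)$. Your extension over $B_0$ is $f''|_D\circ r$ for a retraction $r$ of $B_0\sm(\text{arc})$ onto $D$; this is defined only on the complement of the arc, and it has no reason to extend continuously over the arc: the natural obstruction to extending across a $1$-dimensional singular arc is the holonomy of the field around a small meridian circle of the arc, a $\pi_1(\mc O_3)$-class, so the closing appeal to $\pi_2(\mc O_3)=0$ to ``fill the remaining $D^3$'' is a non sequitur (the missing set is $1$-dimensional, not a $3$-cell). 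Two further problems: (a) you assume $B_0\cap\mc S$ is a single tame arc through $x_0$, which the hypotheses do not give (they constrain $\mc S\cap\bd B$ only, not $\mc S\cap\Int(B)$ near $x_0$, which could be several arcs, an accumulating family, or empty) -- you flag this yourself but do not resolve it; (b) as written, $f''\circ r$ does not agree with $f$ on the cap $(B_0\cap\bd B)\sm\{x_0\}$, since $r$ moves the cap onto $D$, so $f'$ would be discontinuous across $\bd B$ there and would violate the requirement that $f'=f$ on $N\sm\Int_M(B)$.

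The paper avoids all of this with a single point-set construction that needs no homotopy theory and no assumptions on $\mc S\cap\Int(B)$: write $B\cong([0,1]\times\bd B)/\!\sim$ with $(0,x)\sim s$ and $(t,s)\sim s$, and set $f'((t,x)):=f(x)$. This radial ``cone from $s$'' extension of the boundary values is defined and continuous on all of $B\sm\{s\}$ (hence on all of $\Int_M(B)$, erasing every interior singularity, arcs through $s$ included), agrees with $f$ on $\bd B\sm\{s\}$ at $t=1$, and is simply undefined at the single point $s$, where all the holonomy is absorbed. This also shows your final remark is backwards: the one-boundary-singularity case works in every dimension by this construction; it is the \emph{no}-boundary-singularity case that needs $\pi_{n-1}(\mc O_n)=0$ and hence fails for $n=2$.
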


\begin{proof}
Suppose $\bd B$ contains no singularities. Since $\pi_2(\mc O_3)\cong \pi_2(SO(3))=0$ and $\bd B\cong S^2$, 
there exists a nullhomotopy of any map \seqm{\bd B}{}{\mc O_3}. From this we can construct an interpolation 
$f'$ of $f$ from $\bd B$ onto all of $\Int(B)\cong \Int(D^3)\cong S^2\times(0,1)\cup \{s\}$ by applying the 
nullhomotopy over $S^2\times\{t\}$ for each $t\in(0,1)$. 

Now suppose $\bd B$ contains a single point singularity $s$. Note $B\cong ([0,1]\times \bd B)/\sim$ under the 
identifications $(0,x)\sim s$ and $(t,s)\sim s$ for every $x\in \bd B$ and $t\in[0,1]$.
Then we can construct $f'$ from $f$ by defining it on $B$ by $f'((t,x)) = f(x)$ for each $(t,x)\in B\sm\{s\}$.
Note $f'$ is continuous since it is not defined on $s$. 
\end{proof}

\begin{corollary}
Every isolated point singularity $s\in\mc S$ of a $3$-frame field $f$ that is not on $\bd M$ is redundant.
\end{corollary}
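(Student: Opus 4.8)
The plan is to deduce this immediately from Proposition~\ref{PRedundant} by exhibiting a small closed $3$-ball around the isolated point singularity $s$ whose boundary $2$-sphere misses $\mc S$ entirely. First I would use that $s$ is an isolated point singularity lying in the interior of $M$: by definition it is not a limit point of other singularities, so there is an open neighbourhood $V\subseteq \Int_M(M)$ of $s$ with $V\cap\mc S=\{s\}$. Since $M$ is a manifold, shrinking $V$ if necessary I can take a closed $n$-ball $B\subseteq V$ (a chart ball) with $s\in\Int(B)$; then $\bd B\cong S^{n-1}$ is disjoint from $\mc S$, so in particular $\bd B$ contains no singularities at all, which is the first case of Proposition~\ref{PRedundant}.

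Applying Proposition~\ref{PRedundant} with $P=\{s\}$ then gives that $\{s\}$ is redundant: there is a closed $B\subseteq M$ and an $n$-frame field $f'$ on $N\cup\Int_M(B)$ agreeing with $f$ off $\Int_M(B)$, and boundary aligned if $f$ is. Since here $n=3$, the hypothesis $\pi_2(\mc O_3)=0$ invoked in that proposition's proof is exactly the one supplied by~\eqref{EZero}, so no extra input is needed. That is the whole argument; it is essentially a one-line corollary.

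The only point requiring a word of care is that "isolated point singularity" in the statement must be read in the sense defined just after the first Definition in the excerpt, namely a dimension-$0$ singularity that is not a limit point of other singularities — this is what lets me find the neighbourhood $V$ on which $s$ is the \emph{only} singularity. If one only knew $s$ were an isolated \emph{component} of $\mc S$ without the non-limit-point condition, the separation argument could fail, but that situation is excluded by the definition. I expect no genuine obstacle here; the single modest step is verifying that a chart ball around $s$ can be chosen small enough to have its bounding sphere avoid $\mc S$, which follows from local compactness of $\mc S\sm\{s\}$ away from $s$ together with $s$ not being a limit point.
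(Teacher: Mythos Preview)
Your proposal is correct and follows essentially the same approach as the paper: choose a small $n$-ball $B$ around $s$ containing no other singularities (which is possible precisely because $s$ is isolated and in the interior) and then apply Proposition~\ref{PRedundant}. The paper's proof is just the one-liner version of what you wrote.
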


\begin{proof}
In this case $s\in \Int(B)$ for some $n$-ball neighborhood $B$ containing no other singularities.  
\end{proof}

The same cannot be said for $2$-frame fields (which stems from the fact that $\mc O_2\cong S^1$
and $\pi_1(S^1)\cong\mb Z$).
%or from the existence of a nonzero \emph{index} of singularities in dimension $2$~\cite{}.
Some additional conditions are needed when $s$ happens to be on the boundary. 
Let $g$ be the $2$-frame field on $\bd M$ obtained from $f$ 
by forgetting the axis normal to the boundary of each $3$-frame on $\bd M$. 

\begin{proposition}
If $f$ is boundary-aligned and $s$ is an isolated singularity on the boundary of $M$,
then $s$ is redundant if and only if $s$ is redundant as a singularity of the $2$-frame field $g$. 
\end{proposition}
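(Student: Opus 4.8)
The plan is to reduce both redundancy conditions to the existence of a boundary-aligned extension over a small half-ball neighbourhood of $s$, and to pass between the $3$-dimensional and the $2$-dimensional extension problems by forgetting, respectively adjoining, the normal axis; the only nontrivial input will be $\pi_2(\mc O_3)=0$, used to fill in the interior of the half-ball.

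First I would fix a local model. Since $s$ is an isolated point singularity on $\bd M$, there is a half-ball neighbourhood $B\subseteq M$ of $s$ with $B\cap\mc S=\{s\}$ and a homeomorphism $B\cong D^3_+:=\{x\in\mb R^3:\|x\|\le1,\ x_3\ge0\}$ carrying $B\cap\bd M$ to the equatorial disk $E:=\{x_3=0\}\cap D^3_+$ (with $s$ the centre), $\bd_M B$ to the hemisphere $\{\|x\|=1\}\cap D^3_+\cong D^2$, and $\bd B$ to $\bd_M B\cup_{\bd E}E\cong S^2$; here $s\in\Int_M(B)$, while $\bd_M B\subseteq N\sm\Int_M(B)$ and $\bd E=\bd_M B\cap E$ are disjoint from $\mc S$. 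I claim $s$ is redundant for $f$ if and only if $f|_{\bd_M B}$ extends to a boundary-aligned $3$-frame field on $B$, and $s$ is redundant for $g$ if and only if $g|_{\bd E}$ extends to a $2$-frame field on $E$; the non-obvious implications are where isolation of $s$ is used, the point being that redundancy of an isolated singularity is independent of the size of the filling region. Granting this, the proposition reduces to comparing these two local extension problems. I would also record the elementary identification: a boundary-aligned $3$-frame field on a surface patch $E\subseteq\bd M$ is the same data as a $2$-frame field on $E$, since an undirected orthonormal $3$-frame one of whose axes is the surface normal is determined by and determines the undirected orthonormal $2$-frame formed by its other two axes in the tangent plane; in particular the underlying $2$-frame field of $f|_{\bd E}$ is exactly $g|_{\bd E}$.

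For the forward direction, suppose $s$ is redundant for $f$, so $f|_{\bd_M B}$ extends to a boundary-aligned field $\phi$ on $B$. Restricting $\phi$ to $E$ and forgetting the normal axis gives a $2$-frame field on $E$ whose restriction to $\bd E$ is the underlying $2$-frame field of $\phi|_{\bd E}=f|_{\bd E}$, namely $g|_{\bd E}$. Hence $g|_{\bd E}$ extends over $E$, so $s$ is redundant for $g$; equivalently, one simply restricts the filling $f'$ of the definition to $\bd M$ and forgets normals. For the reverse direction, suppose $g|_{\bd E}$ extends to a $2$-frame field $\psi$ on $E$. Adjoining the boundary normal promotes $\psi$ to a boundary-aligned $3$-frame field $\phi_E$ on $E$, and by the identification above $\phi_E|_{\bd E}=f|_{\bd E}$, so $\phi_E$ glues with $f|_{\bd_M B}$ along $\bd E$ to a continuous map $\bd B\cong S^2\to\mc O_3$. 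By~\eqref{EZero}, $\pi_2(\mc O_3)\cong\pi_2(SO(3))=0$, so this map is nullhomotopic and therefore extends over $B\cong D^3$ to a map $\phi\colon B\to\mc O_3$ with $\phi|_E=\phi_E$ (boundary-aligned) and $\phi|_{\bd_M B}=f|_{\bd_M B}$. Pasting $\phi$ on $\Int_M(B)$ with $f$ on $N\sm\Int_M(B)$ — the two agreeing on the common closed set $\bd_M B$ — yields a boundary-aligned $3$-frame field on $N\cup\Int_M(B)$ that agrees with $f$ off $\Int_M(B)$ and is defined at $s$, so $s$ is redundant for $f$.

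The gluings and the local identification are routine; the essential content is that the extension over the interior of $B$ in the reverse direction is unobstructed, which is exactly where $\pi_2(\mc O_3)=0$ is invoked, the remaining obstruction living on $\bd M$ and being supplied by the assumed redundancy of $g$. The fussiest point I expect is the local reduction at the start: replacing a filling over an arbitrary closed $B\subseteq M$ by one over a half-ball about $s$ needs a little care, because the given $f'$ is only required to agree with $f$ outside $\Int_M(B)$, so one must argue that the relevant boundary obstruction class for a small half-ball around $s$ is unchanged — which holds because $s$ is isolated and the region between the two filling domains carries $f$ with no singularities.
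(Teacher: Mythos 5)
Your proof is correct and follows essentially the same route as the paper: the forward direction by restricting the filling to $\bd M$ and forgetting normals, and the converse by promoting the $2$-dimensional filling to a boundary-aligned $3$-frame field on $B\cap\bd M$, gluing it with $f$ on the rest of the sphere $\bd B\cong S^2$, and filling the interior of the $3$-ball using $\pi_2(\mc O_3)=0$. The extra care you take with the half-ball local model and the independence of the filling region is a refinement the paper elides, not a difference of method.
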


\begin{proof}
Since $f$ is boundary aligned, $s$ is redundant for $g$ whenever it is for $f$ directly from definition.
Conversely, if it is redundant for $g$, then we can form a boundary aligned $3$-frame field $f'$ on 
$(N\sm \Int_M(B))\cup (B\cap\bd M)$ that agrees with $f$ on $(N\sm \Int(B))$. 
We are left to interpolate $f'$ from $A=(B\cap\bd M)\cup \bd B$ onto $B\sm A$. 
Notice $A\cong S^2$ and $B\sm A\cong \Int(D^3)$. 
Then such an interpolation exists using a nullhomotopy as in the proof of Proposition~\ref{PRedundant}.    
\end{proof}

Proposition~\ref{PRedundant} applies to reprove a result in~\cite{Li2012}:

\begin{corollary}
Every branch $L$ of a $3$-frame field $f$ that does not intersect $\bd M$ is redundant.
\end{corollary}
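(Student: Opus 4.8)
The plan is to produce, in each of the two cases permitted by the definition of a branch, a closed $3$-ball $B\subseteq M$ satisfying the hypotheses of Proposition~\ref{PRedundant} with $P=L$. Throughout, write $\bar L$ for the closure of $L$ in $M$: a compact arc that is smoothly embedded in $\Int M$ away from at most one endpoint and that meets the rest of $\mc S$ in at most that endpoint.

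Suppose first that $L$ is closed. Then both endpoints of $L$ are leaf vertices, so $L=\bar L$ is an entire connected component of $\mc S$; it lies in $\Int M$ and, $\mc S$ being closed, sits at positive distance from the union of the remaining components of $\mc S$. A sufficiently thin closed tubular neighbourhood $B\cong D^3$ of $L$ then has $L\subset\Int(B)$ and $\bd B\cap\mc S=\emptyset$, so Proposition~\ref{PRedundant} (the case of no boundary singularity) shows $L$ is redundant. Now suppose $L$ is half-open, with leaf endpoint $\ell\in L$ and missing non-leaf vertex $v=\bar L\sm L$, at which the remaining edges of $\mc S$ attach. I want a closed $3$-ball $B$ with $\bar L\subseteq B$, $\ell\in\Int(B)$, $v\in\bd B$, and $\bd B\cap\mc S=\{v\}$; then $L=\bar L\sm\{v\}\subset\Int(B)$, the sphere $\bd B$ carries the single point singularity $v$, and Proposition~\ref{PRedundant} again gives that $L$ is redundant. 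To build $B$, I start from a thin closed tubular neighbourhood of $\bar L$ and pinch it at the $v$-end into a ``teardrop'': a region $B\cong D^3$ that is fat and round near $\ell$ but whose cross-sections shrink to the single point $v$. Since $\mc S$ is an embedded graph, each of the finitely many other edges $e'$ at $v$ meets $\bar L$ only at $v$, so $d_{e'}(s):=\operatorname{dist}(\bar L(s),e')$ is positive for every $s>0$ (with $\bar L$ parametrised by arclength from $v$); keeping the cross-sectional radius of the teardrop below $\tfrac12 d_{e'}(s)$ for all such $e'$, and shrinking the whole tube a little more to miss the other components of $\mc S$, gives $\bd B\cap\mc S=\{v\}$.

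The reduction to Proposition~\ref{PRedundant} and the closed case are routine; the step requiring care is the teardrop in the half-open case, namely that the cross-sectional radius can indeed be tapered to zero at $v$ quickly enough that $\bd B$ acquires no singularity other than $v$ — this is exactly where the tameness of the singularity graph (embeddedness, smooth edge-interiors, finitely many edges at $v$) enters. One minor additional point is the possibility $v\in\bd M$: there one checks the teardrop still lies in $M$ and touches $\bd M$ only at $v$, and that the interpolation furnished by Proposition~\ref{PRedundant} stays boundary-aligned because its domain meets $\bd M$ nowhere near $v$.
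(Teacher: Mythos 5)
Your proof is correct and follows essentially the same route as the paper: take a thin tubular neighbourhood of the branch whose closure is a closed $3$-ball meeting $\mc S$ on its boundary in at most the one removed non-leaf vertex, and invoke Proposition~\ref{PRedundant}. The only difference is that you spell out the two cases (closed versus half-open branch) and the pinched ``teardrop'' construction at the removed vertex, which the paper leaves implicit in the phrase ``small enough tubular neighbourhood.''
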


\begin{proof}
We can take a small enough tubular neighbourhood $\mc T$ of $L$ such that the closure of 
$\mc T$ is an $n$-ball $B$ containing $L$, and with only one singularity on the boundary of $B$, 
namely the at most one non-leaf vertex removed to obtain $L$. Then Proposition~\ref{PRedundant} applies.
\end{proof}

\begin{proposition}
If $\bd M$ contains only vertices of a singularity graph $\mc S$, then $f$ can be modified near $\bd M$ 
so that these vertices are incident to at most one edge.
\end{proposition}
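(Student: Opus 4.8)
The plan is to work locally near each boundary vertex $v$ of $\mc S$ and show that all the singularity edges emanating from $v$ into $\Int(M)$ can be merged, inside a small half-ball neighbourhood, into a single edge. So suppose $v\in\bd M$ is a vertex incident to edges $e_1,\ldots,e_k$ of $\mc S$ (all of which, by hypothesis, meet $\bd M$ only at $v$ since $\bd M$ contains only vertices of $\mc S$). First I would choose a small closed half-ball neighbourhood $B$ of $v$ in $M$ — i.e. $B\cong D^3_+ = \{x\in D^3 : x_n\geq 0\}$ with $B\cap\bd M \cong D^2$ — chosen small enough that $B$ meets $\mc S$ in exactly the initial segments of $e_1,\ldots,e_k$ running from $v$ to the spherical part $S = \bd B\cap\Int(M)$ of $\bd B$, each $e_i$ hitting $S$ in one point $x_i$, and $B$ contains no vertices of $\mc S$ other than $v$.

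The key geometric move: replace the "spider" $\{v\}\cup(e_1\cup\cdots\cup e_k)$ inside $B$ by a single arc $L'$ from $v$ to a point $x_0\in S$ together with a small interior "hub" — but the cleaner packaging is to note that $B\sm(\text{spider})$ is homeomorphic to $B'\sm L'$ where $L'$ is a single radial arc from $v$ to a boundary point of an $n$-ball $B'$. Concretely, I want to exhibit a homeomorphism $\Phi\colon (B, \text{spider}) \to (B, L')$ that is the identity outside a slightly larger half-ball, where $L'$ is an unknotted arc from $v$ to one point of $S$. This is a standard fact for tamely embedded graphs in a $3$-ball with a single boundary vertex on the flat face: the spider is a contractible tame subcomplex meeting $\bd B$ only in prescribed points, and pushing all the arcs together is an ambient isotopy of the half-ball rel a neighbourhood of $\bd B$ away from the $x_i$ — essentially the same "coning/shrinking" argument used in Proposition~\ref{PRedundant}, now performed on the $1$-skeleton rather than on a point. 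Transporting the frame field $f$ through $\Phi$ gives a new frame field on $B\sm L'$ agreeing with $f$ near $\bd B$, hence patching to a global frame field $f'$ on $M\sm\mc S'$ where $\mc S'$ agrees with $\mc S$ outside $B$ and has $v$ incident to the single edge $L'$ inside $B$. Boundary alignment is preserved because $\Phi$ can be taken to restrict to a homeomorphism of $B\cap\bd M$ fixing a neighbourhood of its own boundary, so $f'$ still satisfies the normal-axis condition on $\bd M$.

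There is a subtlety I should address: after this operation $v$ is incident to exactly one edge of $\mc S'$ inside $B$, but $v$ might have had no edges at all, in which case $v$ is a redundant isolated boundary singularity and can simply be absorbed (remove it and interpolate $f$ across a small half-ball using the $\pi_2(\mc O_3)=0$ nullhomotopy argument, exactly as in the proof following Proposition~\ref{PRedundant} for a boundary point singularity). Running this over all the (finitely many, since $\mc S$ is a compact graph) boundary vertices, with the neighbourhoods $B$ chosen disjoint, yields the claim. The main obstacle is making the "merge the arcs of a tame spider in a half-ball" homeomorphism genuinely rigorous — one must know the edges $e_i$ enter $v$ tamely and can be straightened simultaneously, and that the resulting ambient isotopy can be taken to fix $\bd B$ near the $x_i$ so the frame field glues; this is where I would either invoke tameness of PL/smooth embeddings of graphs in $3$-manifolds or, to stay self-contained, set it up combinatorially by choosing $B$ to be a regular (tubular) neighbourhood of the cone $v*\{x_1,\ldots,x_k\}$ so that $B$ is manifestly a half-ball and the spider is its core, reducing the straightening to an explicit model computation in $D^3_+$.
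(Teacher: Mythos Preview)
Your argument has a genuine gap at its central step. The homeomorphism of pairs $\Phi\colon(B,\text{spider})\to(B,L')$ that you want cannot exist when $k\geq 2$: the spider has a point of local valence $k$ while an arc is a $1$-manifold, so no self-homeomorphism of $B$ (ambient isotopy or otherwise) carries one to the other. The weaker claim that $B\sm\text{spider}\cong B'\sm L'$ fails for the same reason, as their manifold boundaries are $S^2$ minus $k{+}1$ points versus $S^2$ minus $2$ points. And even granting such a $\Phi$, the $k$ singularity arcs that continue outside $B$ from $x_1,\ldots,x_k\in S$ would have nothing to meet on the inside (your $L'$ touches $S$ only at $x_0$), so the modified graph would acquire interior leaves. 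Your opening remark about an interior ``hub'' is the correct picture---push the branch point off $\bd M$ into $\Int(M)$ and join it back to $v$ by a single new edge---but you abandon it for the ``cleaner packaging,'' and that is precisely the false part.

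The paper implements the hub idea globally in one line, sidestepping all local surgery. Attach an external collar $C=\bd M\times[0,1]$ to $M$ along $\bd M\times\{0\}$ and extend the frame field by $f(x,t):=f(x)$. For each boundary vertex $v$ the singularity set gains a new edge $\{v\}\times[0,1]$; the old $v$ is now an interior vertex (of valence $k{+}1$), and the new boundary vertex $(v,1)$ is incident to exactly that one edge. Since $M\cup C\cong M$, this finishes the proof---no ambient isotopies, tameness arguments, or $\pi_2$ input required.
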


\begin{proof}
Let $M'$ be $M$ with a a collar $C=\bd M\times[0,1]$ attached to $\bd M$ along $\{0\}\times\bd M$, 
and extend $f$ to $M'$ by defining $f(x,t)=f(x)$ for each $(x,t)\in\bd M\times\{t\}\subset C$. 
Since $M'\cong M$, we are done. 
\end{proof}

\section{Existence on Surfaces}

%Since $M$ is an oriented $3$-manifold embedded in $\mb R^3$, $\bd M$ is an closed oriented $2$-manifold 
%(a closed oriented surface). 
Since a boundary-aligned $3$-frame field on $M$ induces a $2$-frame field (cross field) on the surface $\bd M$
(simply by forgetting the axis of each frame that is normal to $\bd M$ where it is located), we begin with the 
question of existence of $2$-frame fields on general oriented surfaces.

\begin{proposition}
\label{PEuler}
If $\mc N$ is a compact connected surface and $\chi(\mc N)=0$, 
then there exists a boundary-aligned $2$-frame field on $\mc N$ without singularities.
\end{proposition}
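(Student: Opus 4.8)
The plan is to reduce the statement to the classical fact about tangent vector fields and then dispatch the short list of surfaces with $\chi(\mc N)=0$. First I would note that a nowhere-zero vector field $v$ on $\mc N$ that is tangent to $\bd\mc N$ along the boundary induces a boundary-aligned $2$-frame field with empty singularity set. Indeed, since $\mc V_{2,2}=O(2)$, the fiber of $\rho'\colon V_2(\mc N)/\mc B_2\to\mc N$ over a point $x$ is precisely the set of unordered pairs of mutually orthogonal (unoriented) lines in $T_x\mc N$, so one takes $s(x)$ to be the cross consisting of the line spanned by $v(x)$ together with its orthogonal complement in $T_x\mc N$. This is continuous because $v$ is continuous and nowhere vanishing, it uses no orientation of $\mc N$, and it is boundary-aligned because along the $1$-manifold $\bd\mc N$ one of its two axes is the tangent line to $\bd\mc N$ and the other is therefore normal to it. So it suffices to produce such a $v$.

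Next I would invoke the classification of compact connected surfaces: the hypothesis $\chi(\mc N)=0$ forces $\mc N$ to be the torus $T^2$, the Klein bottle $K$, the annulus $S^1\times[0,1]$, or the M\"obius band, and in each case a suitable $v$ is written down by hand. On $T^2=\mb R^2/\mb Z^2$ and on $K=\mb R^2/\vbr{(x,y)\mapsto(x+1,-y),\ (x,y)\mapsto(x,y+1)}$ the constant field $\partial_x$ is invariant under every deck transformation (each of whose differentials is $\mathrm{diag}(1,\pm1)$) and hence descends, and there is no boundary to worry about. On $S^1\times[0,1]$ the field $\partial_\theta$ pointing in the $S^1$-direction is nowhere zero and tangent to each boundary circle. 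On the M\"obius band, presented as $[0,1]\times[0,1]$ with $(0,y)\sim(1,1-y)$, the field $\partial_x$ again descends (the seam transition has differential $\mathrm{diag}(1,-1)$) and is tangent to the single boundary circle swept out by the two horizontal edges. Applying the first step then gives the desired frame field in every case.

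A more uniform route avoids the classification: by the Poincar\'e--Hopf theorem for compact manifolds with boundary, any vector field on $\mc N$ tangent to $\bd\mc N$ with only isolated zeros, all lying in $\Int(\mc N)$, has total index $\chi(\mc N)$, so when $\chi(\mc N)=0$ one slides these zeros together along arcs in $\Int(\mc N)$ and cancels them by a modification supported in a disk, leaving a nowhere-zero tangent field, after which the first step applies. In either approach the only place needing genuine care is the boundary case: checking that the explicit field really descends through the identifications and really is tangent to $\bd\mc N$, or (in the second approach) fixing the index convention for tangent-to-boundary fields and invoking the zero-cancellation lemma correctly. Everything else is formal.
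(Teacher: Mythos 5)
Your proof is correct, and it is more careful than the paper's own two-line argument in two respects. The paper reduces to the statement that a manifold has a nowhere-zero vector field iff $\chi=0$ (citing Hopf) and then induces the cross field by taking cross products with surface normals. That argument implicitly assumes $\mc N$ is orientable (a consistent normal is needed for the cross product) and does not address boundary alignment: for a compact surface with non-empty boundary a nowhere-zero vector field exists regardless of $\chi$, and only a field tangent (or normal) to $\bd\mc N$ produces a \emph{boundary-aligned} cross field. Your version repairs both points: passing to the unordered pair $\{\mathrm{span}\,v,\ (\mathrm{span}\,v)^{\perp}\}$ inside $T_x\mc N$ needs no orientation, so the Klein bottle and M\"obius band (which satisfy the hypotheses but are invisible to the cross-product construction) are covered, and requiring $v$ to be tangent to $\bd\mc N$ is precisely what makes the resulting cross field boundary-aligned. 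The price is the appeal to the classification of surfaces with $\chi=0$ and a short case-by-case construction, where the paper gets by with a citation; but since the cited equivalence is only literally true for closed manifolds, your route is the one that actually establishes the proposition as stated. The alternative Poincar\'e--Hopf sketch at the end would require the index convention for fields tangent to the boundary to be pinned down, but it is not needed given the explicit fields on the torus, Klein bottle, annulus, and M\"obius band.
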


\begin{proof}
A manifold $\mc N$ has a non-zero vector field if and only if its Euler characteristic $\chi(\mc N)$ is zero~\cite{Milnor, Hopf},
and a vector field on an oriented surface $\mc N$ induces a $2$-frame field simply by taking cross products with surface normals.
\end{proof}

A converse of this can be shown using the homology of $\widetilde{\mc O}_3$: 

\begin{proposition}
\label{PEuler2}
If $\mc N$ is a compact connected oriented surface,
then there exists a boundary-aligned $2$-frame field $g$ on $\mc N$ without singularities if and only if $\chi(\mc N)=0$
(namely, $\mc N$ must either be a torus $S^1\times S^1$ or an annulus $S^1\times [0,1]$).
\end{proposition}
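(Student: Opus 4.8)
The plan is to prove only the non-trivial (``only if'') direction: the existence of a singularity-free boundary-aligned $2$-frame field when $\chi(\mc N)=0$ is exactly Proposition~\ref{PEuler}. So assume $g$ is a boundary-aligned $2$-frame field on $\mc N$ with empty singularity set. First I would fix a smooth embedding of $\mc N$ into $\mb R^3$ (possible since $\mc N$ is a compact orientable surface), which lets me regard $g$ as a map $g\colon\seqm{\mc N}{}{\widetilde{\mc O}_3}$ whose value at $x$ is the frame with distinguished first axis the oriented unit normal $\nu(x)$ and remaining (unordered, undirected) axes the two axes of the cross $g(x)\subset T_x\mc N$. Since $\tilde\rho\colon\seqm{\widetilde{\mc O}_3}{}{S^2}$ projects a frame onto its distinguished first axis, the composite $\tilde\rho\circ g$ is then precisely the Gauss map $\nu\colon\seqm{\mc N}{}{S^2}$ of the embedding. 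The goal is to deduce $\chi(\mc N)=0$; the parenthetical clause then follows from the classification of compact connected oriented surfaces, since writing the genus as $h$ and the number of boundary circles as $b$ one has $\chi(\mc N)=2-2h-b$, and $\chi(\mc N)=0$ forces $(h,b)=(1,0)$, the torus, or $(h,b)=(0,2)$, the annulus $S^1\times[0,1]$.

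The main step is the closed case $\bd\mc N=\emptyset$. Here $\nu_*\colon\seqm{\hlgyc{2}{\mc N}{\Z}}{}{\hlgyc{2}{S^2}{\Z}}$ carries the fundamental class of $\mc N$ to $\deg(\nu)$ times that of $S^2$, while by the classical Gauss--Bonnet relation the degree of the Gauss map of a closed oriented surface in $\mb R^3$ equals half its Euler characteristic (see~\cite{Milnor, Hopf}). But $\nu_*$ factors as $\seqmm{\hlgyc{2}{\mc N}{\Z}}{g_*}{\hlgyc{2}{\widetilde{\mc O}_3}{\Z}}{(\tilde\rho)_*}{\hlgyc{2}{S^2}{\Z}}$, and $\hlgyc{2}{\widetilde{\mc O}_3}{\Z}=0$ by~\eqref{EHom}. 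Hence $\nu_*=0$, so $\deg(\nu)=0$ and $\chi(\mc N)=0$.

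For the boundary case $\bd\mc N\neq\emptyset$ I would reduce to the closed case by doubling. Let $D\mc N=\mc N\cup_{\bd\mc N}\overline{\mc N}$ be the double, a closed connected oriented surface with $\chi(D\mc N)=2\chi(\mc N)-\chi(\bd\mc N)=2\chi(\mc N)$ (as $\bd\mc N$ is a disjoint union of circles). Boundary-alignment is exactly what is needed to glue $g$ to its mirror copy along the seam: in a collar of a boundary point the doubling involution fixes the directions tangent to $\bd\mc N$ and negates the transverse one, so a cross one of whose axes is tangent to $\bd\mc N$ is fixed, and therefore $g$ and its reflection assemble to a continuous singularity-free $2$-frame field on $D\mc N$. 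Applying the closed case gives $\chi(D\mc N)=0$, whence $\chi(\mc N)=0$. I expect the main obstacle to be largely expository: one must be comfortable quoting the identity $\deg(\text{Gauss map})=\chi/2$ and must make the identification of $g$ with a map to $\widetilde{\mc O}_3$ (and of $\tilde\rho\circ g$ with the Gauss map) precise enough to run the degree argument; beyond that, the doubling step uses the boundary-alignment hypothesis in an essential way but is otherwise routine.
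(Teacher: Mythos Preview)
Your proposal is correct and follows essentially the same route as the paper: lift $g$ to a map $\tilde g\colon\mc N\to\widetilde{\mc O}_3$ using the oriented normal of an embedding in $\mb R^3$, identify $\tilde\rho\circ\tilde g$ with the Gauss map, and kill its degree via $H_2(\widetilde{\mc O}_3;\Z)=0$; then handle the bounded case by doubling along $\bd\mc N$, using boundary-alignment to make the glued cross field continuous across the seam. The paper phrases the doubling as gluing two copies of $\mc N$ along their boundaries by a degree $-1$ map on each circle, but this is the same construction you describe.
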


\begin{proof}
Pick a smooth embedding of $\mc N$ in $\mb R^3$. The $2$-frame field $g$ induces a map
$$
\tilde g\wcolon\seqm{\mc N}{}{\widetilde{\mc O}_3}
$$
given for each $x\in \mc N$ by assigning the element of $\widetilde{\mc O}_3$ whose first distinguished axis goes
through the surface normal at $x$ given by the orientation on $\mc N$, and the other two axes the tangent cross given by $g(x)$.

Let us first suppose $\mc N$ is closed. Notice the composite 
$$
\tilde\rho\circ \tilde g\wcolon\seqm{\mc N}{}{S^2}
$$ 
with the quotient \seqm{\widetilde{\mc O}_3}{\tilde\rho}{S^2} the Gauss map of $\mc N$, 
whose degree is well known to be equal to $\frac{1}{2}\chi(\mc N)$. 
Thus, the map $(\tilde\rho\circ \tilde g)_*\colon\seqm{\mb Z\cong H_2(\mc N)}{}{H_2(S^2)\cong\mb Z}$ 
it induces on integral homology is multiplication by $\frac{1}{2}\chi(\mc N)$.
But $(\rho')_*=0$ on $H_2()$ since $H_2(\widetilde{\mc O}_3)=0$. 
Therefore $\chi(\mc N)=0$.

Now suppose $\mc N$ has non-empty boundary. Obtain a closed oriented surface $\mc N^+$  by gluing two copies of $\mc N$ 
along their common boundary via the degree $-1$ map \seqm{S^1}{-1}{S^1} on each boundary component. 
Since $\chi(S^1)=0$, the inclusion-exclusion principle implies $\chi(\mc N^+)=2\chi(\mc N)$. 
Since $g$ is boundary-aligned on both copies of $\mc N$, choosing a smooth embedding for $\mc N^+$, 
defines a frame field on $\mc N^+$ without singularities. Thus $\chi(\mc N^+)=0$, and we are done.
\end{proof}

\subsection{The Poincar\'e-Hopf Theorem}

A version of the Poincar\'e-Hopf theorem for boundary-aligned $2$-frame fields over oriented surfaces has been given 
in~\cite{Ray2008, Fogg2017, Beaufort2017}. This is stated in terms of a quarter-integer \emph{index} defined on point 
singularities of $2$-frame fields, the sum of which is related the Euler characteristic of the surface. 
Algorithms for constructing such frame fields with prescribed singularity constraints are also given in~\cite{Ray2006}.
We will give a somewhat different proof of the Poincar\'e-Hopf theorem for $2$-frame fields. We also prove the converse.
Namely, that a boundary-aligned $2$-frame field with a given pattern of singularities indexes exists when they sum to the 
Euler characteristic.

Fix $\mc N$ to be a compact oriented surface (possibly with boundary $\bd\mc N$), 
and $g$ a (not necessarily boundary-aligned) $2$-frame field with $\mc S$ consisting of finitely many isolated point 
singularities, all of which lie in the interior of $\mc N$.

Pick a sufficiently small $2$-disk neighbourhood $D_s\subseteq\mc N$ of each singularity $s\in\mc S$ so that $D_s$ 
contains no other singularities. Take a local trivialization \seqm{TD_s}{\cong}{D_s\times \mb R^2} of the tangent sub-bundle 
$TD_s \subseteq T\mc N$ such that it restricts to orientation preserving linear isomorphism \seqm{T\{x\}}{\cong}{\{x\}\times R^2}
on tangent planes $T\{x\}\cong \mb R^2$ for each $x\in D_s$ (the orientation on $\{x\}\times\mb R^2$ is taken to be clockwise).
This gives a trivialization $V_2(D_s)/\mc B_2 \cong D_s\times \mc O_2$ as a sub-bundle of $V_2(\mc N)$. Take the map 
$$
\kappa_s\wcolon\seqm{S^1\,\cong\, \bd D_s}{}{\mc O_2\,\cong\, S^1}
$$
defined as the composite 
$$
\seqmmm{\bd D_s}{g}{V_2(D_s)/\mc B_2}{\cong}{D_s\times \mc O_2}{}{\mc O_2},
$$
where the first map is the restriction of $g$ to $\bd D_s$, and the last map is the projection onto the second factor.
Then define
$$
ind_g(s) \,:=\, \frac{1}{4}deg(\kappa_s)
$$
where $deg(\kappa_s)$ is the homological degree of 
\seqm{\mb Z\cong H_1(\bd D_s)}{(\kappa_s)_*}{H_1(\mc O_2)\cong\mb Z}.
(\begin{remark} 
This can equivalently be taken to be the Brouwer degree if $g$ is a smooth map. 
Otherwise $g$ can be homotoped to a smooth map by Whitney approximation and the Brouwer degree used.
The index can also be thought of as integrating the signed angle change of a cross as it rotates about itself going clockwise around 
$\bd D_s$ with respect to the given orientation. 
%Note $ind_g(s)$ is independent of orientation.
\end{remark})

A notion of \emph{index} can be defined in a similar manner on each connected boundary component of $\mc N$ 
whenever $\bd\mc N$ is non-empty. 
Let $B\subseteq \bd\mc N$ be a connected boundary component and $C_B$ a closed collar neighbourhood of $B\cong S^1$. 
Since $C_B$ is an annulus $S^1\times [0,1]$, the tangent sub-bundle $TC_B$ is trivial. Then similarly as before we can define 
$$
\kappa_B\wcolon\seqm{S^1\,\cong\, B}{}{\mc O_2\,\cong\, S^1}
$$
and
$$
ind_g(B) \,:=\, \frac{1}{4}deg(\kappa_B).
$$
by projecting frames in $g$ that are on $B$ to the second factor of $V_2(\mc N)/\mc B_2\cong C_B\times \mc O_2$.
This trivialization of $V_2(\mc N)/\mc B_2$ depends on the choice of trivialization $TC_B\cong C_B\times \mb R^2$,
which can affect the value of $ind_g(B)$. 
We choose the one inherited from the standard trivialization of $T\mb R^2$ by embedding the annulus $C_B$ in $\mb R^2$ 
so that it bounds circles or radius $1$ and $2$, with $B$ embedded onto the inner circle of radius $1$. In this case we have
$$
ind_g(B) = ind_g(s)
$$
whenever $B = \bd D_s$ for some singularity $s$. On the other hand, if crosses on $B$ have axes normal to $B$,
then they rotate $4$ times back to themselves going around $B$ with respect to tangent planes in this trivialization.
So if $g$ is boundary-aligned on $B$, then
$$
ind_g(B) = 1
$$
with respect to this trivialization.

We make use of the well-known classification of compact surfaces: 
any closed connected oriented surface is homeomorphic to either a $2$-sphere $S^2$ or a connected sum $T\#\cdots\# T$ 
of torii $T=S^1\times S^1$; any oriented surface with boundary is homeomorphic to a closed oriented surface with \emph{holes},
each hole obtained by removing the interior of an arbitrarily small $2$-disk neighbourhood $D_s$ of a point $s$. 
%Every connected oriented surface $\mc N$ satisfies $\chi(\mc N)\leq 2$, 
%with $\chi(\mc N)=2$ only when $\mc N\cong S^2$, 
%$\chi(\mc N)=1$ only when $\mc N\cong D^2$, 
%and $\chi(\mc N)=0$ only when $\mc N$ is a torus $T$ or annulus $S^1\times [0,1]$. 

\begin{proposition}
\label{PDeg}
Suppose $\mc N$ is a compact connected oriented surface. Let $B_0,\ldots,B_{\ell-1}$ denote the connected boundary components 
of $\bd\mc N$.
\begin{itemize}
\item[(i)] If there exists a (not necessarily boundary-aligned) $2$-frame field $g$ on $\mc N$ without singularities, then
$$
\chi(\mc N) + \ell\, =\, \csum{i\geq 0}{}{ind_g(B_i)}.
$$
\item[(ii)] Conversely, if we pick any integers $k_0,\ldots,k_{\ell-1}$ such that 
$$
\chi(\mc N)  + \ell\, =\, \csum{i\geq 0}{}{\frac{k_i}{4}},
$$
then there exists a (not necessarily boundary-aligned) $2$-frame field $g$ on $\mc N$ without singularities 
such that $ind_g(B_i) = \frac{k_i}{4}$ for each $i$.
\end{itemize}
\end{proposition}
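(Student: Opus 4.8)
The plan is to deduce both parts from one fact about the Euler number of the circle bundle of crosses over a closed oriented surface obtained by capping off $\mc N$, and then to extract the index identity by the usual obstruction‑theoretic count of the singularities of a section. First I would form the capped surface: cap each boundary circle $B_i$ with a disk $D_i$ to get a closed oriented surface $\mc N^{+}=\mc N\cup\bigsqcup_i D_i$ with $\chi(\mc N^{+})=\chi(\mc N)+\ell$, fix a smooth embedding $\mc N^{+}\hookrightarrow\mb R^{3}$, and let $\nu\wcolon\seqm{\mc N^{+}}{}{S^{2}}$ be its Gauss map, of degree $\tfrac12\chi(\mc N^{+})$. Exactly as in the proof of Proposition~\ref{PEuler2}, the pullback $\nu^{*}\widetilde{\mc O}_3$ of \seqm{\widetilde{\mc O}_3}{\tilde\rho}{S^{2}} is canonically the bundle $V_2(\mc N^{+})/\mc B_2\to\mc N^{+}$ of crosses tangent to $\mc N^{+}$, since the fibre of $\widetilde{\mc O}_3$ over a unit vector $n$ is (using~\eqref{ESOn}) the space of crosses in $n^{\perp}$ and $\nu(x)^{\perp}=T_x\mc N^{+}$; under this identification a section is a cross field, and the section induced by a lift $\tilde g$ of a cross field is that cross field. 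Because the transgression $\seqm{\pi_2(S^{2})}{}{\pi_1(S^{1})}$ of $\tilde\rho$ is multiplication by $8$ (computed just before~\eqref{Ehomo}), the Euler class of \seqm{\widetilde{\mc O}_3}{\tilde\rho}{S^{2}} is $8$ times a generator of $H^{2}(S^{2})$, so the Euler number of $V_2(\mc N^{+})/\mc B_2$ is $8\deg\nu=4\chi(\mc N^{+})=4(\chi(\mc N)+\ell)$. (Alternatively, by~\eqref{ESOn} this bundle is the unit tangent bundle of $\mc N^{+}$, of Euler number $\chi(\mc N^{+})$, modulo its order‑$4$ rotation subgroup $\mc D_2\subseteq SO(2)$, and dividing out a cyclic subgroup of order $k$ multiplies the Euler number by $k$.)

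For part (i), I would extend the given nonsingular cross field $g$ over each cap $D_i$ radially, with respect to the trivialization used in the definition of $ind_g(B_i)$ to turn the collar of $B_i$ inside out, obtaining a cross field $g^{+}$ on $\mc N^{+}$ with a single point singularity $c_i$ in the interior of each $D_i$ and, by that same normalization, with $ind_{g^{+}}(c_i)=ind_g(B_i)$, equivalently $\deg\kappa_{c_i}=\deg\kappa_{B_i}=4\,ind_g(B_i)$. Then $g^{+}$ is a section of $V_2(\mc N^{+})/\mc B_2$ defined away from the finite set $\{c_0,\dots,c_{\ell-1}\}$, so the obstruction to extending it over all of $\mc N^{+}$, which is the Euler class, evaluates on the fundamental class to $\sum_i$ (local index of $g^{+}$ at $c_i$), and the local index at $c_i$ is the winding number of $g^{+}$ around a small loop at $c_i$ read in a local trivialization of the fibre $\mc O_2\cong S^{1}$, namely $\deg\kappa_{c_i}$. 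Equating this with the Euler number computed above gives $4\sum_i ind_g(B_i)=4(\chi(\mc N)+\ell)$, which is (i). (For $\ell=0$ this is just the assertion that a closed oriented surface carrying a nonsingular cross field has $\chi=0$, recovering the closed case of Proposition~\ref{PEuler2}.)

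For part (ii), the hypothesis $\sum_i k_i/4=\chi(\mc N)+\ell$ means $\sum_i k_i=4\chi(\mc N^{+})$ is exactly the Euler number of $V_2(\mc N^{+})/\mc B_2$, so I would run the same obstruction theory in reverse. Choose points $c_0,\dots,c_{\ell-1}$ in $\mc N^{+}$, take any section of $V_2(\mc N^{+})/\mc B_2$ over $\mc N^{+}$ minus small disks about the $c_i$ (one exists because the bundle is trivial over a surface with nonempty boundary), observe that its boundary winding numbers automatically sum to the Euler number, and multiply it by a map to $S^{1}$ with prescribed boundary degrees --- which exists precisely because the required corrections sum to zero --- so that the local index at each $c_i$ becomes $k_i$. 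Taking $c_i$ interior to the cap $D_i$ and restricting this section to $\mc N=\mc N^{+}\setminus\bigsqcup_i\Int D_i$ produces a nonsingular cross field $g$ on $\mc N$ with $ind_g(B_i)=\tfrac14\deg\kappa_{B_i}=k_i/4$.

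The step I expect to be the main obstacle is the bookkeeping identity $ind_g(B_i)=ind_{g^{+}}(c_i)$: one must check that turning the collar of $B_i$ inside out, as specified in the definition of $ind_g$, produces exactly the trivialization data arising when one glues on a disk and reads off the index of a singularity at its centre, so that the ``boundary index'' of $g$ along $B_i$ agrees with the index of the capping singularity $c_i$ --- the same phenomenon that makes $ind_g(B)=1$ for a boundary‑aligned field and $ind_g(B)=ind_g(s)$ when $B=\bd D_s$. Once this, together with the matching of orientations of $B_i$, of the fibre $\mc O_2$, and of $S^{2}$, is pinned down, everything else is the classical Poincar\'e--Hopf argument, with the factor $4$ supplied by $|\mc D_2|=4$.
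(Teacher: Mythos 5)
Your argument is correct, but it takes a genuinely different route from the paper. The paper avoids all bundle-theoretic machinery: part (i) is proved by induction over the connected-sum-of-tori decomposition of $\mc N$, starting from holed spheres (where the identity follows from the fact that a map \seqm{\bd M}{}{S^1} which extends over $M$ has total degree zero) and propagating via the gluing identity $ind_f(B')+ind_h(B')=2$ along shared hole boundaries; part (ii) runs the same induction, using $S^1=K(\mb Z,1)$ to realize prescribed degrees on the hole boundaries of a holed sphere and then gluing. You instead cap off to $\mc N^+$, identify the cross bundle with $\nu^*\widetilde{\mc O}_3$ (an identification the paper itself uses in Proposition~\ref{PEuler2}, but only to exploit $H_2(\widetilde{\mc O}_3)=0$), compute its Euler number as $4\chi(\mc N^+)$, and read off both directions from the standard fact that the local indices of a section with isolated zeros sum to the Euler number. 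Your route is shorter and makes (ii) nearly automatic once the Euler number is known, at the cost of invoking the primary-obstruction/Euler-class formalism and of the collar-versus-cap trivialization bookkeeping you correctly single out (the paper's convention $ind_g(B)=ind_g(s)$ for $B=\bd D_s$ is set up precisely so that this works). The paper's route is longer but entirely elementary, and its gluing identity is reused at several later steps of the induction. One small point to watch in your part (ii): the degree of the correcting map \seqm{\mc N^+\sm\bigsqcup_i\Int(D_i)}{}{S^1} on $B_i$ is read against the boundary orientation of the holed surface, which is opposite to that of $\bd D_i$ as boundary of the cap; this only flips all signs uniformly and preserves the zero-sum constraint, but it must be accounted for when matching the final local index at $c_i$ to $k_i$ so that $ind_g(B_i)=k_i/4$.
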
 

\begin{proof}[Proof of (i):]
The case where $\mc N$ is closed ($\ell=0$) follows from Proposition~\ref{PEuler2}, 
so let us suppose $\mc N$ has non-empty boundary. 
We proceed by induction on torus decomposition starting with $2$-spheres with holes. 

\textbf{Holed spheres:} Suppose $\mc N$ is a $2$-sphere with $\ell\geq 1$ holes, 
so $\bd\mc N$ consists of $B_i$'s forming the boundaries of these holes. 
Choose a smooth embedding $\xi$ of $\mc N$ into the standard unit $2$-disk $D^2$ in $\mb R^2$ 
such that the first component $B_0$ is the boundary of $D^2$, 
with the rest of the $B_i$'s forming boundaries of holes in the interior of $D^2$ (if any). 
Then our $2$-frame field is represented by a map \seqm{\mc N}{g}{\mc O^2\cong S^1}. 
Consider the degree $deg(g_{{\mathrel{|}}B_i})$ of the restriction of $g$ to the circle $B_i$. 
Notice $deg(g_{\mathrel{|}B_i}) = deg(\kappa_{B_i})$ when $i\geq 1$ and $deg(g_{\mathrel{|}B_0}) = deg(\kappa_{B_0})-8$.
This last equality is due to our embedding of the annulus $C_{B_0}$ defining $\kappa_{B_0}$ being inverted when we embed via $\xi$.
This causes a discrepancy between the standard bases of tangent planes in our chosen trivialization of $TC_{B_0}$ 
and those in the trivialization $TD^2$: 
upon embedding the standard bases of tangent planes on $B_0\subseteq C_{B_0}$ are rotated twice when going around $\bd D^2$, 
and each rotation corresponds to $4$ rotations of a cross to itself. 
Now recall the following well-known fact~\cite{Milnor}: 
if $X$ is a connected $n$-manifold, $M$ a compact oriented $(n+1)$-manifold with boundary, 
and a map \seqm{\bd M}{f}{X} extends to a map $\seqm{M}{}{X}$, then $deg(f)=0$. Therefore
$$
0\,=\, deg(g_{{\mathrel{|}}\bd\mc N}) \,=\,  \csum{i}{}{deg(g_{\mathrel{|}B_i})}
\,=\,  deg(\kappa_{B_0})-8+\csum{i\geq 1}{}{deg(\kappa_{B_i})},
$$
which implies $\chi(\mc N)+\ell = \chi(S^2) = 2 = \csum{i}{}{ind_g(B_i)}$.

\textbf{Gluing:} This implies a useful fact, which we use below. 
If $\mc N$ is obtained by gluing $Y$ and $Z$ along some common hole boundaries $B'\cong S^1$, 
then
\begin{equation}
\label{ECommon}
ind_f(B') + ind_h(B') = 2  
\end{equation}
where $f$ and $h$ are the restrictions of $g$ to $Y$ and $Z$. To see this, take the $2$-sphere with $2$ holes
$\Sigma_2\cong B'\times [0,1]$, and define a $2$-frame field $\delta$ on $\Sigma_2$ by $\delta(x,t)=g(x)$ for each $x\in B'$, $t\in [0,1]$.
Then using the above, $ind_\delta(B'\times\{0\}) + ind_\delta(B'\times\{1\}) = 2$, and assuming the correct orientation,
$ind_\delta(B'\times\{0\})=ind_f(B')$ and $ind_\delta(B'\times\{1\})=ind_h(B')$.

\textbf{Holed torii:} Next suppose $\mc N$ is a torus $T$ with $\ell\geq 1$ holes with boundaries $B_i$. 
We can form $\mc N$ by gluing a $2$-sphere with $2$ holes $\Sigma_2$ to a $2$-sphere with $\ell+2$ holes $\Sigma_{\ell+2}$ 
along circle boundaries $B'_1$ and $B'_2$ of two holes. Let $f$ and $h$ be the restrictions of $g$ to $\Sigma_2$ and $\Sigma_{k+2}$.
Then by the above we have $ind_f(B'_j)+ind_h(B'_j)=2$ for $j=1,2$ and 
$$
ind_f(B'_1) + ind_f(B'_2) \,=\, 2
$$
$$
ind_h(B'_1) + ind_h(B'_2) + \csum{i}{}{ind_g(B_i)}\,=\,2.
$$
Therefore $\chi(\mc N)+\ell = \chi(T) = 0 = \csum{i}{}{ind_g(B_i)}$.

\textbf{General case:} Finally, suppose $\mc N$ compact connected oriented surface with non-empty boundary.
If $\mc N$ is not a sphere or torus with holes, then $\mc N$ is homeomorphic to a connected sum
of torii $T_1\#\cdots\# T_k$ with $\ell$ holes poked in it, with some $\ell_j\geq 0$ holes of these lying in each torus $T_j$,
and the $B_i$ form the boundaries of these holes. 
Let $\mc N_1$ be $T_1\#\cdots\# T_{k-1}$ with $\ell-\ell_k+1$ holes: the holes $B_i$ that do not lie in $T_k$, 
together with an addition hole $B'\cong S^1$.
Similarly, let $\mc N_2$ be $T_{k-1}$ with $\ell_k+1$ holes: the holes $B_i$ lying in $T_k$, together with an additional hole $B'$.
Choosing $B'$ appropriately, $\mc N$ is formed by gluing $\mc N_1$ and $\mc N_2$ along $B'$, 
and our $2$-frame field $g$ restricts to frame fields $f$ and $h$ on these.
Inducting on number of torii in a decomposition, we may assume Proposition~\ref{PDeg} (i) holds for $\mc N_1$ and $\mc N_2$, so
$$
\chi(\mc N_1) + \ell-\ell_k+1\,\,\, =\,\,\, ind_f(B')+\csum{B_i\subseteq\bd\mc N_1}{}{ind_g(B_i)}
$$
$$
\chi(\mc N_2) + \ell_k+1\,\,\, =\,\,\, ind_h(B')+\csum{B_i\subseteq\bd\mc N_2}{}{ind_g(B_i)}.
$$
Then since $ind_f(B')+ind_h(B')=2$, 
by the inclusion-exclusion principle $\chi(\mc N)=\chi(\mc N_1)+ \chi(\mc N_2)$ since $\chi(B')=0$, 
and $\ell=\csum{i}{}{\ell_i}$, we obtain
$$
\chi(\mc N) + \ell\, =\, \csum{i\geq 0}{}{ind_g(B_i)}.
$$
\end{proof}

\begin{proof}[Proof of (ii):]
We proceed by induction as in part (i). 
Again, the $\ell=0$ case follows from Proposition~\ref{PEuler2}, so we assume $\mc N$ has non-empty boundary.

We will make use of the fact that $S^1$ is the \emph{Eilenberg-Maclane space} $K(\mb Z,1)$. 
This implies for any $CW$-complex $X$ there is a one-to-one correspondance 
$$
\zeta\wcolon\seqm{[X,S^1]}{}{H^1(X)}
$$
between the homotopy classes of maps \seqm{X}{f}{S^1} and integral cohomology groups $H^1(X)$.
Here $\zeta$ is given for any representative $f$ by $\zeta([f]) = f^*(\mu)$ where $\mu$ is a generator of $H^1(S^1)\cong\mb Z$ 
and \seqm{H^1(S^1)}{f^*}{H^1(X)} is the map induced by $f$ on cohomology.
We will find this useful for constructing $2$-frame fields for surfaces $X$ embedded in $\mb R^2$, 
since under the homeomorphism $S^1\cong \mc O_2$ the map \seqm{X}{f}{S^1} describes a $2$-frame field on $X$ in $\mb R^2$.

\textbf{Holed spheres:} Suppose $\mc N=\Sigma_\ell$ is a $2$-sphere with $\ell\geq 1$ holes embedded in $D^2\subseteq\mb R^2$ as in part (i).
%Let $C=\coprod_{i\geq 1}B_i$ and consider the cofibration sequence \seqmm{C}{}{\mc N}{}{\mc N/C}.
%Notice the cofiber $\mc N/C$ is homotopy equivalent to $D^2$ with $\ell-1$ intervals $[0,1]$ attached at $0$
%and all identified at $1$, so $\mc N/C$ is homotopy equivalent to a wedge of circles. 
Let \seqm{\bd B_i}{\iota_i}{\mc N} denote the inclusion of the hole boundary.
By the morphism of cohomology long exact sequences induced by the inclusion of pairs 
\seqm{(\mc N,\,\coprod_{i\geq 1}  \bd B_i)}{\rho}{(D^2,\,\coprod_{i\geq 1}  B_i)},
together with the fact that $\rho$ induces an isomorphism on cohomology by excision, we obtain
$$
H^1(\mc N)\cong\mb Z\{\beta_1,\ldots,\beta_{\ell-1}\}
$$ 
such that $(\iota_i)^*(\beta_i)=\nu_i$ and $(\iota_i)^*(\beta_j)=0$ if $i\neq j$, where $\nu_i$ is a generator of $H^*(B_i)\cong\mb Z$.
Consider the element $\omega =  k_1\beta_1+\cdots+k_{\ell-1}\beta_{\ell-1}$ in $H^1(\mc N)$ together with the homotopy class 
$\zeta^{-1}(\omega)\in [\mc N,S^1]$ (for $X=\mc N$), and let \seqm{\mc N}{g}{S^1} be a representative of this homotopy class. Then 
$$
(\iota_i)^*\circ g^*(\mu)\,=\,(\iota_i)^*(\omega)\,=\,k_i\nu_i.
$$
Thus, as $\Sigma_2$ is embedded in $\mb R^2$, $g$ is a $2$-frame field that restricts to a degree $k_i$ map on $B_i$ for $i\geq 1$ 
(i.e. $ind_g(B_i)=\frac{k_i}{4}$). The remaining constraint is for $B_0=\bd D^2$ to satisfy $ind_g(B_0)=\frac{k_0}{4}$.
But this follows now from part $(i)$ since: we must have $\chi(\mc N) + \ell\, =\, \csum{i}{}{ind_g(B_i)}$; 
we assume that $\chi(\mc N) + \ell\, =\, \csum{i}{}{k_i}$; and we found that $ind_g(B_i)=\frac{k_i}{4}$ for $i\geq 1$.

(\begin{remark}
The $2$-frame field $g$ can be constructed more concretely by using a homotopy equivalence \seqm{\mc N}{\aleph}{C} with the wedge sum of circles 
$C=\bigvee_{i\geq 1}B_i$. Here $\aleph$ can be taken so that each inclusion $\iota_i$ composes with $\aleph$ 
to the inclusion \seqm{B_i}{}{C} of the $i^{th}$ summand. Then a map \seqm{C}{q}{S^1} can easily be constructed so that
$g=q\circ \aleph$ is a frame field as above.   
\end{remark})

\textbf{Gluing:} The holed sphere case implies a useful fact, which we use below. 
Suppose $f$ and $h$ are $2$-frame fields on $Y$ and $Z$,
and $\mc N$ is obtained by gluing $Y$ and $Z$ along holes boundaries $B'$ such that $ind_f(B') + ind_h(B') = 2$.
Then there is a $2$-frame field $g$ on $\mc N$ satisfying $ind_g(B)=ind_f(B)$ or $ind_g(B)=ind_h(B)$,
for all hole boundaries $B$ of $\mc N$ that are holes boundaries of $Y$ or $Z$ respectively. To see this, 
note $\mc N$ can be identically obtained by gluing $Y$ and $Z$ to either end of a $2$-sphere with $2$ holes 
$\Sigma_2\cong B'\times [0,1]$ for each $B'$.
Using the above, there is a $2$-frame field $\delta$ on each $\Sigma_2$ such that 
$ind_\delta(B'\times\{0\})=ind_f(B')$ and $ind_\delta(B'\times\{1\})=ind_h(B')$. 
This implies $\delta$ is homotopic to $f$ and $h$ on either of the two boundary holes of $\Sigma_2$, 
so taking homotopies along collar neighbourhoods of these holes, 
we can modify $\delta$ so that it is equal to $f$ and $h$ on each hole. 
Then define $g$ by restricting to $f$, $h$, and $\delta$ on $Y$, $Z$, and $\Sigma_2$ respectively.

\textbf{Holed torii:} Returning to our induction, suppose $\mc N$ is a torus $T$ with $\ell\geq 1$ holes, as in part (i),
formed by gluing a $2$-spheres with holes $\Sigma_2$ and $\Sigma_{\ell+2}$ along hole boundaries $B'_1$ and $B'_2$.
Let $B_0,\ldots,B_{\ell-1}$ the holes of $\Sigma_{\ell+2}$ that are not either of the $B'_i$'s.
Using the above, our assumption $\csum{i}{}{\frac{k_i}{4}}=\chi(\mc N)+\ell$,
and the fact $\chi(\mc N)+\ell=\chi(T)=0$ and $\chi(\Sigma_j)+j=2$,  
we can construct $2$-frame fields $f$ and $h$ on $\Sigma_2$ and $\Sigma_{\ell+2}$ with
$ind_f(B'_i)=ind_h(B'_i)=\frac{k'_i}{4}$ and $ind_h(B_i)=\frac{k_i}{4}$ such that: 
$\frac{k'_1}{4}+\frac{k'_2}{4}=2$ and $\frac{k'_1}{4}+\frac{k'_2}{4}+\csum{i}{}{\frac{k_i}{4}}=2$.
Then as we showed above, $f$ and $h$ imply there exists a $2$-frame field $g$ on $\mc N$ with $ind_g(B_i)=\frac{k_i}{4}$.

\textbf{General case:} Finally, suppose $\mc N$ compact connected oriented surface with non-empty boundary,
obtained as in part (i) by gluing the connected sums of torii $\mc N_1$ and $\mc N_2$ along a hole boundary $B'$.
Pick $k'_1$ and $k'_2$ such that $\frac{k'_1}{4}+\frac{k'_2}{4}=2$. Since  $\chi(\mc N)=\chi(\mc N_1)+ \chi(\mc N_2)$, we have
$$
\chi(\mc N_1) + \ell-\ell_k+1\,\,\, =\,\,\, \frac{k'_1}{4}+\csum{B_i\subseteq\bd\mc N_1}{}{\frac{k_i}{4}}
$$
$$
\chi(\mc N_2) + \ell_k+1\,\,\, =\,\,\, \frac{k'_2}{4}+\csum{B_i\subseteq\bd\mc N_2}{}{\frac{k_i}{4}}.
$$
Inducting on number of torii in a connected sum decomposition we may assume Proposition~\ref{PDeg} (ii) holds for 
$\mc N_1$ and $\mc N_2$. So the above equalities imply there are $2$-frame fields $f$ and $h$ on $\mc N_1$ and $\mc N_2$
such that $ind_f(B_i)=\frac{k_i}{4}$ or $ind_h(B_i)=\frac{k_i}{4}$ if $B_i\subseteq\bd\mc N_1$ or $B_i\subseteq\bd\mc N_2$. 
So by the above we obtain our $2$-frame field $g$ such that $ind_g(B_i)=\frac{k_i}{4}$, and we are done.

\end{proof}

\begin{theorem}
\label{TPH}
Suppose $\mc N$ is a compact connected oriented surface of $\bd\mc N$.
\begin{itemize}
\item[(i)] If there exists a boundary-aligned $2$-frame field $g$ on $\mc N$ with finite singularities $\mc S$, then
$$
\chi(\mc N)\, =\, \csum{s\in \mc S}{}{ind_g(s)}.
$$
\item[(ii)] Conversely, if we pick any finite set of interior points $\mc S\subseteq \mc N$ and integers $k_s$ such that 
$$
\chi(\mc N) \, =\, \csum{s\in \mc S}{}{\frac{k_s}{4}},
$$
then there exists a boundary-aligned $2$-frame field $g$ on $\mc N$ with singularities $\mc S$
such that $ind_g(s) = \frac{k_s}{4}$ for each $s\in\mc S$.
\end{itemize}
\end{theorem}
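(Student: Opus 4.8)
The plan is to deduce Theorem~\ref{TPH} from Proposition~\ref{PDeg} by a standard ``cut out the singularities, boundary-align the holes'' argument, handling the two directions separately but in parallel.

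\textbf{Proof of (i).} First I would remove a small open $2$-disk neighbourhood $\Int(D_s)$ around each singularity $s\in\mc S$, obtaining a compact surface $\mc N' = \mc N\sm\bigcup_{s}\Int(D_s)$ on which $g$ restricts to a $2$-frame field without singularities. The boundary $\bd\mc N'$ is the disjoint union of the original boundary components $B_0,\dots,B_{\ell-1}$ (if any) together with the new circles $\bd D_s$. By construction of the index, $ind_g(\bd D_s) = ind_g(s)$, and since $g$ is boundary-aligned on each original $B_i$ we have $ind_g(B_i) = 1$. Applying Proposition~\ref{PDeg}(i) to $\mc N'$ gives
$$
\chi(\mc N') + \ell + |\mc S| \;=\; \csum{i}{}{ind_g(B_i)} + \csum{s\in\mc S}{}{ind_g(s)} \;=\; \ell + \csum{s\in\mc S}{}{ind_g(s)}.
$$
Finally I would use the inclusion–exclusion computation $\chi(\mc N') = \chi(\mc N) - |\mc S|\cdot\chi(D^2) + |\mc S|\cdot\chi(S^1) = \chi(\mc N) - |\mc S|$ (removing each open disk drops the Euler characteristic by $\chi(D^2)-\chi(S^1)=1$). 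Substituting, the $|\mc S|$ and $\ell$ terms cancel and we are left with $\chi(\mc N) = \sum_{s\in\mc S} ind_g(s)$.

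\textbf{Proof of (ii).} Given the interior points $\mc S$ and integers $k_s$ with $\chi(\mc N) = \sum_s \frac{k_s}{4}$, I would again form $\mc N'$ by deleting a small open disk around each prescribed point $s$, so $\bd\mc N'$ consists of the $B_i$ together with the circles $\bd D_s$. As above, $\chi(\mc N') + (\ell + |\mc S|) = \chi(\mc N) + |\mc S| + \ell = \sum_s \frac{k_s}{4} + \ell = \sum_s\frac{k_s}{4} + \csum{i}{}{\frac{4}{4}}$, so the integers $\{k_s\}_{s}$ for the new boundary circles and $\{4\}$ for each original $B_i$ satisfy the hypothesis of Proposition~\ref{PDeg}(ii) for $\mc N'$. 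That proposition then produces a $2$-frame field $g$ on $\mc N'$ without singularities with $ind_g(\bd D_s) = \frac{k_s}{4}$ and $ind_g(B_i) = 1$. Since $ind_g(B_i)=1$ with respect to the trivialization inherited from embedding a collar of $B_i$ as the standard annulus, the frame field $g$ is homotopic on each collar of $B_i$ to a boundary-aligned one; absorbing that homotopy along the collar makes $g$ boundary-aligned on all of $\bd\mc N$. Viewing $\mc N$ as $\mc N'$ with the disks $D_s$ glued back, extend $g$ over each $D_s$ by coning: picking a homotopy of $g|_{\bd D_s}$ realizing the standard degree-$k_s$ map $S^1\to \mc O_2\cong S^1$, we define $g$ on $D_s\sm\{s\}$ using this homotopy over concentric circles, leaving a singularity at the centre $s$ of index $\frac{1}{4}deg = \frac{k_s}{4}$. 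This yields a boundary-aligned $2$-frame field on $\mc N$ with singularity set exactly $\mc S$ and the prescribed indices.

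\textbf{Main obstacle.} The only delicate points are bookkeeping rather than conceptual: keeping the orientation conventions and the annulus-trivialization normalization consistent so that ``$ind_g(B_i)=1$ $\Leftrightarrow$ boundary-aligned'' genuinely holds (this was established in the discussion preceding Proposition~\ref{PDeg}), and checking that the coning construction over each $D_s$ produces a singularity of the correct index $\frac{k_s}{4}$ — i.e. that the degree of the restriction to $\bd D_s$ is exactly $k_s$ with the orientation used to define $ind_g$. Both are routine once the conventions from the preceding subsection are carried through carefully; there is no essential topological difficulty beyond what Proposition~\ref{PDeg} already supplies.
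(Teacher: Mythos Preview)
Your proof is correct and follows essentially the same approach as the paper: cut out disks around the singularities to obtain $\mc N'$, apply Proposition~\ref{PDeg} to $\mc N'$, and for part~(ii) cone the field back over each $D_s$ toward the centre. Your Euler-characteristic bookkeeping matches the paper's identity $\chi(\mc N)+k=\chi(\mc N')+\ell$.

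One point where you are actually more careful than the paper: in part~(ii) the paper simply applies Proposition~\ref{PDeg}(ii) to get a field with $ind_g(B)=1$ on the original boundary components and then extends over the $D_s$, without saying why the result is boundary-aligned on $\bd\mc N$. You explicitly observe that $ind_g(B_i)=1$ forces $g|_{B_i}$ to be homotopic to the boundary-aligned field (since both have the same degree in $[S^1,\mc O_2]\cong\mb Z$), and absorb that homotopy along a collar. That step is genuinely needed for the conclusion as stated, so your version is cleaner there. The coning you describe is slightly more elaborate than necessary---the paper just ``shrinks the frame field on $\bd D_s$ towards the centre $s$'', which already gives the correct index since $ind_g(\bd D_s)=\frac{k_s}{4}$ by construction---but your version is certainly correct.
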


\begin{proof}[Proof of (i)]
Let $k$ be the total number of connected boundary components of $\mc N'$.
For each $s\in\mc S$ take a small disk neighbourhood $D_s$ of $s$ such that no two of these nieghbourhoods intersect.
Then remove the interior of each $D_s$ from $\mc N$ to obtain a sub-surface $\mc N'$ having $\bd D_s$ as hole boundaries,
and with a total of $\ell=|\mc S|+k$ boundary components. 
Restricting the $2$-frame field $g$ to $\mc N'$, we have $ind_g(\bd D_s)=ind_g(s)$. 
For all other $k$ hole boundaries $B$ that are not any of $\bd D_s$, we have $ind_g(B)=1$ (since $g$ is boundary-aligned).
Since $\chi(\mc N) +k= \chi(\mc N') + \ell$, part(i) follows immediately from part (i) of Proposition~\ref{PDeg}. 
\end{proof}

\begin{proof}[Proof of (ii)]
Conversely, since $\chi(\mc N')+\ell=k+\chi(\mc N) =k+\csum{s\in \mc S}{}{\frac{k_s}{4}}$, 
by part (ii) of Proposition~\ref{PDeg} there is a (not necessarily boundary-aligned) $2$-frame field $g$ without singularities on $\mc N'$
satisfying $ind_g(\bd D_s)=k_s$ for each $s$, as well as $ind_g(B)=1$ for the other $k$ hole boundaries.
Then extend $g$ to $\mc N\sm\mc S$ by shrinking the frame field on $\bd D_s$ towards the center $s$ of $D_s$.   
\end{proof}

\begin{bigremark}
More is known for $1$-frame fields, which are popularly known as \emph{line fields}. With the appropriate notion of \emph{index} 
a Poincar\'e-Hopf theorem exists for line fields on compact smooth manifolds $M$. 
Moreover, a line field without singularities exists on $M$ if and only if there is also an everywhere non-zero vector field on $M$
(equivalently $\chi(M)=0$ if $M$ is connected)~\cite{Grant}.
\end{bigremark}

\section{Existence on $3$-manifolds}

Returning to $3$-frame fields, let $M$ be a $3$-manifold without corners smoothly embedded in $\mb R^3$,
and \seqm{M\sm\mc S}{f}{\mc O_3} a boundary-aligned $3$-frame field with singularity graph $\mc S$. 
We assume only the vertices of edges intersect $\bd M$, so $\mc S$ restricts to isolated point singularities on $\bd M$
at leaf vertices. Since $f$ can be modified locally so that the graph $\mc S$ has no interior branches, no isolated point 
singularities, and vertices on $\bd M$ are leaf vertices incident to exactly one edge, we can assume these properties hold 
for $f$ without losing too much generality. There are some additional assumptions on $\mc S$ made in the context of 
hex meshing in order to ensure a valid hex-only mesh:

\begin{enumerate}
\item[(1)] \textbf{Vertex alignment:} Frames very close to an interior vertex $v$ have an axis that point towards $v$. 
In other words, if $B_v$ is a sufficiently small ball neighbourhood of $v$ in $M\sm\bd M$, 
then frames on $\bd B_v\cong S^2$ have an axis that is normal to the surface $\bd B_v$ where they are located.
\item[(2)] \textbf{Edge alignment:} Frames very close to a the interior of an edge $e$ of the graph $\mc S$ 
have an axis that is tangent to $e$ where they are located. In other words, if $P$ is a plane through a point
$v$ in the interior of $e$ with normal a unit tangent vector $t_v$ of $e$ at $v$, then frames on a small $2$-disk 
neighbourhood $D\subset P$ of $v$ also have an axis normal to $P$. 
\end{enumerate}

In this case frames on $D\sm\{v\}$ induce a $2$-frame field $h_v$ on $D\sm\{v\}$ by forgetting the normal axis,
meaning we can define an \emph{edge index}
$$
ind_f(e) \,:=\, ind_{h_v}(v).
$$
Since a homotopy of $v$ along the interior of $e$ induces a homotopy of $h_v$,  
$ind_f(e)$ is independent of choice of $v$. Also If $ind_f(e)=0$, then the edge $e$ is redundant since the degree 
of $(h_v)_{\mathrel{|}\bd D}$ is $0$. This can be seen deleting the frame field in the interior of $D$, redefining it there 
using the nullhomotopy of $(h_v)_{\mathrel{|}\bd D}$, and then applying Proposition~\ref{PRedundant}. 
  
Let $\mc V$ denote the set of vertices of $\mc S$, $\mc V^\bd\subseteq \mc V$ the subset of vertices on $\bd M$, 
$\mc V^\circ=\mc V\sm\mc V^\bd$ the vertices in the interior, and $\mc E$ the set of edges of $\mc S$. 
For each vertex $v$, let $\mc E_v\subseteq \mc E$ denote the subset of edges that are incident to $v$. 

A central open problem when designing frame fields for hex-only meshing asks when a $3$-frame field can be extended to 
all of $M$ outside a chosen singularity graph $\mc S$ when there are prescribed contraints on its boundary and along singularities
as above~\cite{Liu2018, Crane2019, Palmer2020}. Again, they have be chosen carefully since they do not always lead to a valid 
hex-only mesh~\cite{Liu2018, Nieser2011}. Constructing the normal aligned frame field on the boundary surface is equivalent to 
constructing a surface cross field, which is a much easier problem solve. Some recent work has focused on extending the 
Poincar\'e-Hopf theorem for $2$-frame fields to $3$-frame fields~\cite{Liu2018, Nieser2011}. For example, the following
identities hold in terms of edge indices.

\begin{proposition}
If $f$ satisfies vertex and edge alignment, then the following identities must hold.
\begin{itemize}
\item[(i)] For each vertex $v$ in the interior of $M$:
$$
\csum{e\in \mc E_v}{}{ind_f(e)} \,\,=\,\, 2.
$$
\item[(i)] Over edges incident to every vertex:
$$
\csum{v\in \mc V}{}{\csum{e\in \mc E_v}{}{ind_f(e)}} \,\,\,=\,\,\, \chi(\bd M) + 2|\mc V^\circ| \,\,\,=\,\,\, 2\chi(M) + 2|\mc V^\circ|.
$$
\end{itemize}
\end{proposition}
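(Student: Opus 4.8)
The plan is to derive both identities from the Poincar\'e-Hopf theorem for $2$-frame fields on surfaces, Theorem~\ref{TPH}(i): for (i) by applying it to a small sphere surrounding an interior vertex, and for (ii) by applying it to the boundary surface $\bd M$ and then feeding in (i).

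For part (i), fix an interior vertex $v$ and choose a ball neighbourhood $B_v\subseteq M\sm\bd M$ so small that $\mc S\cap B_v$ is precisely the union of the initial segments of the edges $e\in\mc E_v$ issuing from $v$. Then $\Sigma:=\bd B_v$ is an oriented $2$-sphere meeting each such $e$ transversally in one point $s_e$, and $\mc S\cap\Sigma=\{s_e\}_{e\in\mc E_v}$. By vertex alignment every frame of $f$ along $\Sigma\sm\{s_e\}$ carries an axis normal to $\Sigma$; forgetting it yields a $2$-frame field $g_v$ on $\Sigma\cong S^2$ whose singular set is exactly $\{s_e\}_{e\in\mc E_v}$. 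The key local point, which I would verify, is that since $B_v$ is small the edge $e$ is as nearly perpendicular to $\Sigma$ at $s_e$ as we wish, so a small disk of $\Sigma$ about $s_e$ functions as a transverse disk to $e$; moreover a frame cannot possess two orthogonal axes pointing in nearly the same direction, so the axis normal to $\Sigma$ that we discarded is exactly the axis tangent to $e$ used to define $ind_f(e)$, and with the orientation of $\Sigma=\bd B_v$ matched to the convention behind $ind_f(e)$ this gives $ind_{g_v}(s_e)=ind_f(e)$. Theorem~\ref{TPH}(i) applied to the closed surface $\Sigma\cong S^2$ then yields $\sum_{e\in\mc E_v}ind_f(e)=\sum_{s\in\mc S\cap\Sigma}ind_{g_v}(s)=\chi(S^2)=2$.

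For part (ii), split the double sum over vertices into the interior vertices and the vertices on $\bd M$. The interior ones contribute $2\abs{\mc V^\circ}$ by part (i). Each $v\in\mc V^\bd$ is a leaf incident to a single edge $e_v$, so its inner sum is just $ind_f(e_v)$. Forgetting the axis normal to $\bd M$ — which $f$ carries on $\bd M\sm\mc S$ by boundary alignment — produces a $2$-frame field $g$ on the closed oriented surface $\bd M$ whose singular set is exactly $\mc V^\bd$. By the same local argument as in (i), with $e_v$ (arranged to be) perpendicular to $\bd M$ at $v$ so that a small disk of $\bd M$ about $v$ is a transverse disk to $e_v$, and noting that $ind_f(e_v)$ is insensitive to the chosen direction along $e_v$, one gets $ind_g(v)=ind_f(e_v)$. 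Applying Theorem~\ref{TPH}(i) to each connected component of $\bd M$ and summing gives $\sum_{v\in\mc V^\bd}ind_f(e_v)=\sum_{v\in\mc V^\bd}ind_g(v)=\chi(\bd M)$, whence the double sum equals $\chi(\bd M)+2\abs{\mc V^\circ}$. The final equality $\chi(\bd M)=2\chi(M)$ is the standard fact for a compact $3$-manifold with boundary: the double $DM=M\cup_{\bd M}M$ is a closed odd-dimensional manifold, so $0=\chi(DM)=2\chi(M)-\chi(\bd M)$.

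The main obstacle is making the two local index identifications $ind_{g_v}(s_e)=ind_f(e)$ and $ind_g(v)=ind_f(e_v)$ fully rigorous: one must choose coordinates near the relevant endpoint of the edge so that the edge genuinely is perpendicular to the relevant surface ($\Sigma$ or $\bd M$), check that the various ``forget one axis'' operations (normal to $\Sigma$, tangent to $e$, normal to $\bd M$) all restrict near that point to one and the same $2$-frame field on the same disk, and keep track of the orientation conventions in the definition of the cross-field index — which, like the vector-field index, is unchanged under reversing the orientation of the transverse disk — so that the homological degrees, and hence the quarter-integer indices, match with the correct sign. Granting these local matchings, everything else is bookkeeping with the surface Poincar\'e-Hopf theorem already established in Theorem~\ref{TPH}.
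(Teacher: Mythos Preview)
Your proposal is correct and follows essentially the same route as the paper: for (i) you apply Theorem~\ref{TPH} to the small sphere $\bd B_v$ using vertex and edge alignment to identify $ind_{g_v}(s_e)=ind_f(e)$, and for (ii) you split over $\mc V^\circ$ and $\mc V^\bd$, use boundary alignment to get the induced cross field $g$ on $\bd M$ with $ind_g(v)=ind_f(e_v)$, and invoke $\chi(\bd M)=2\chi(M)$ for odd-dimensional $M$. The paper's own proof is slightly terser on the local index matching and on the doubling argument, but the structure is identical.
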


\begin{proof}[Proof of (i):]
By vertex alignment $g$ induces a $2$-frame field $g_v$ on $\bd B_v\sm \mc A$ with point singularities $\mc A$
corresponding to the points on $\bd B_s\cong S^2$ where the incident edges in $\mc E_v$ intersect $\bd B_v$.
By edge alignment the index of these singularities is equal to the index $ind_f(e)$ of the edge $e\in \mc E_v$
passing through it. Then since $\chi(S^2)=2$, the result follows by Theorem~\ref{TPH}.
\end{proof}

\begin{proof}[Proof of (ii):]
Since $M$ is an odd dimensional manifold, $\chi(\bd M) = 2\chi(M)$ holds, and since $f$ is boundary-aligned, 
it restricts to an induced $2$-frame field $g$ on $\bd M$. 
For vertices on the boundary $v\in V^\bd$ there is only one edge $e$ in $\mc E_v$, and $ind_f(e)=ind_g(v)$, 
so it follows from Theorem~\ref{TPH} that
$$
\csum{v\in \mc V^\bd}{}{\csum{e\in \mc E_v}{}{ind_f(e)}} \,\,=\,\, \csum{v\in \mc V^\bd}{}{ind_g(v)} \,\,=\,\, \chi(\bd M).
$$
On the other hand, for vertices in the interior we have 
$$
\csum{v\in \mc V^\circ}{}{\csum{e\in \mc E_v}{}{ind_f(e)}} \,\,=\,\, \csum{v\in \mc V^\circ}{}{2} \,\,=\,\, 2|\mc V^\circ|.
$$
by part (i).
\end{proof}

This is only a necessary condition for $f$ to exist.  
In~\cite{Liu2018} a necessary and sufficient condition is given when boundary alignment and edge alignment constraints are present. 
This is stated in terms of a non-linear system of equations, against which an intricate \emph{chart zippering and merging} algorithm is 
provided to construct the desired field.
An alternate approach is taken in~\cite{Crane2019}, where a boundary and singularity constrained field is obtained from a continuous 
least squares optimization problem using a notion of derivative of moving frames across an edge (with a quadratic objective and linear 
constraints corresponding to tet edges on the boundary and along neighbourhoods of singularities).  
This has several advantages in practice, including the solution being independent of initialization and generation of new singularities being 
avoided away from the constraints. 

The condition we pursue is rooted in homotopy theory, this time involving a system of monomial equations consisting of discrete variables 
in the binary octahedral group $2\mc D_3$, each of which encode choices of elements in the fundamental group $\pi_1(\mc O_3)\cong 2\mc D_3$. 
Also, the system is given with respect to general $CW$-decompositions and a certain choice of spanning tree of its $1$-skeleton -- with variables 
corresponding to edges outside the tree. The use of $CW$-decompositions can simplify this system drastically compared to tetrahedral 
decompositions. At the same time, operations can be performed on any tetrahedral decomposition to turn it into a simpler $CW$-decomposition. 
This is at the expense of using information coming from the homotopy groups of $\mc O_3$, which is sometimes not completely explicit.

We apply this condition generally to $3$-frame fields with any given boundary and singularity constraints, not only those with 
boundary alignment, or edge and vertex alignment near $\mc S$. Namely, to any continuous choice of frames that have been fixed by the 
user on the boundary and near chosen singularities $\mc S$, which they wish to extend away from everywhere else into the 
interior without introducing any new singularities there. This is in a sense less flexible since a boundary frame field must be generated
first (say a cross field), then a singularity graph and constraints around it must be chosen that extend into the interior from the point 
singularities on the boundary. On the other hand, an aligned frame field on the boundary surface is easy to generate, and flexibly 
enough so that any given point singularity constraints that satisfy the Poincar\'e Hopf theorem for cross fields can be made hold. 
We also do not have to worry about alignment around corners, so we drop our assumption on $M$ being without corners from now on.

\subsection{A Necessary and Sufficient Condition}
\label{SNSC}
The concept of a $CW$-decomposition of a $n$-manifold $M$ generalizes tetrahedral or hex subdivisions.
This is defined in terms of a filtration 
$$
sk^0 M\subset sk^1 M\subset\cdots\subset sk^n M = M.
$$
The $i$-\emph{skeleton} $sk^i M$ is a subspace of $M$ of dimension $i$, built up from $sk^{i-1}M$ by gluing 
$i$-dimensional disks $D^i$ (called $i$-cells) to $sk^{i-1}M$ along their $(i-1)$-sphere boundaries via 
\emph{attaching maps} \seqm{S^{i-1}}{}{S^{i-1}}. For example, the $0$-skeleton $sk^0 M$ is a finite subset of 
points in $M$, and the $1$-skeleton $sk^1 M$ is a graph obtained from $sk^0 M$ by attaching edges to points at their 
ends. 

A $CW$-decomposition is \emph{regular} if each of its attaching maps are sphere homeomorphisms.
In this case, the graph $sk^1 M$ has edges attached to exactly two points (has no loops), the $2$-cells in $sk^2 M$ 
are glued to cycles in $sk^1 M$, and generally the boundaries of $i$-cells in $M$ are collections of $(i-1)$-cells 
patching together to form a topological $(i-1)$-sphere. A tetrahedral subdivision is an example of a regular 
$CW$-decomposition with facets glued along their boundary to exactly three edges forming a cycle in $sk^1 M$, 
and so on.

A \emph{spanning tree} $\mc T$ of a connected undirected graph $\mc G$ is a tree subgraph that contains 
all vertices of $\mc G$. A spanning tree can be constructed interatively by letting $\mc T^0$ be a single vertex, 
and $\mc T^i$ obtained from $\mc T^{i-1}$ by adjoining a choice of edge $e=\{v,w\}$ in $\mc G$ such that 
the vertex $v$ is in $\mc G -\mc T^{i-1}$, and the other vertex $w$ is in $\mc T^{i-1}$. 

\begin{lemma}
\label{LSpanning}
If $\mc H$ is a (not necessarily connected) subgraph of $\mc G$, then a spanning tree $\mc T$ can be constructed 
so that for each path-connected component $\mc H_j$ of $\mc H$, $\mc U_j:=\mc T\cap\mc H_j$ is a spanning tree 
of $\mc H_j$. 
\end{lemma}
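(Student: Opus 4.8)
The plan is to first pick a spanning tree inside each component of $\mc H$, glue these into a forest of $\mc G$, and then grow that forest to a spanning tree of $\mc G$ while being careful never to adjoin an edge that already sits inside one of the components $\mc H_j$. Concretely, for each path-connected component $\mc H_j$ of $\mc H$ I would apply the iterative construction recalled just before the lemma \emph{within} $\mc H_j$ to obtain a spanning tree $\mc U_j$ of $\mc H_j$. Since distinct components $\mc H_j$ are pairwise vertex-disjoint, the union $\mc F := \bigcup_j \mc U_j$, enlarged by every vertex of $\mc G$ that lies in no $\mc H_j$ (each as a one-vertex tree), is a cycle-free subgraph of $\mc G$ containing all vertices of $\mc G$, i.e. a spanning forest of $\mc G$.

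Next I would extend $\mc F$ to a spanning tree by mimicking the iterative construction, but starting from the forest $\mc F$ rather than a single vertex: given a spanning forest $\mc F_{i-1}\supseteq\mc F$ of $\mc G$, if $\mc F_{i-1}$ is connected set $\mc T:=\mc F_{i-1}$; otherwise, since $\mc G$ is connected there is an edge $e$ of $\mc G$ whose two endpoints lie in distinct components of $\mc F_{i-1}$, and set $\mc F_i:=\mc F_{i-1}\cup\{e\}$. Adjoining such an $e$ merges two components without closing a cycle, so $\mc F_i$ is again a spanning forest; as $\mc G$ is finite this terminates, necessarily at a connected spanning forest, that is, a spanning tree $\mc T$ of $\mc G$ with $\mc T\supseteq\mc F$.

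It then remains to check $\mc U_j=\mc T\cap\mc H_j$ for every $j$. The inclusion $\mc U_j\subseteq\mc T\cap\mc H_j$ is immediate from the construction. For the reverse, suppose some edge $e$ of $\mc H_j$ lies in $\mc T$ but not in $\mc U_j$; its endpoints both lie in $\mc H_j$ and hence are joined by a path in the connected subtree $\mc U_j\subseteq\mc T$, so this path together with $e$ is a cycle in $\mc T$, contradicting that $\mc T$ is a tree. Hence $\mc T\cap\mc H_j=\mc U_j$, which is a spanning tree of $\mc H_j$. The one point needing care — the only thing I would call an obstacle, and it is mild — is that the growing step must adjoin only edges joining \emph{distinct} components of the current forest: this is precisely what keeps the intermediate graphs acyclic and what guarantees that no interior edge of an $\mc H_j$ (whose endpoints already lie in the single component $\mc U_j$) is ever picked up. Equivalently, one could phrase the extension step as contracting each vertex set of $\mc H_j$ to a point, choosing any spanning tree of the resulting connected quotient graph, and lifting its edges back to $\mc G$; the two formulations are interchangeable.
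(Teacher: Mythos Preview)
Your proof is correct and follows essentially the same approach as the paper: both pre-select spanning trees $\mc U_j$ of the components $\mc H_j$ and then extend their union to a spanning tree of $\mc G$ without ever adjoining an edge of $\mc H_j\setminus\mc U_j$. The only cosmetic difference is the order of operations---the paper grows a single tree from one vertex and attaches each $\mc U_j$ wholesale the first time the growing tree touches $\mc H_j$, whereas you start with the full spanning forest $\bigcup_j\mc U_j$ (plus the stray vertices) and then connect components; your verification that $\mc T\cap\mc H_j=\mc U_j$ is in fact more explicit than the paper's.
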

\begin{proof}
Let $\mc U_j$ be any choice of spanning tree of $\mc H_j$.
We simply repeat the above construction of $\mc T$, but this time whenever $e=\{v,w\}$ is adjoined 
to $\mc T^i$ such that $v$ is a vertex of some $\mc H_j$ and $w$ in $\mc T^{i-1}$, then we also adjoin 
$\mc U_j$ to $\mc T^i$ by connecting them via $e$. This does not create any new cycles, so a tree is 
maintained throughout.
\end{proof}

Spanning trees are useful for describing fundamental group of connected graphs.
Namely, $\pi_1(\mc G)$ is the free group with generators in one-to-one correspondance to the edges in 
$\mc G-\mc T$. This is easy to see when we notice that the quotient map \seqm{\mc G}{}{\mc G/\mc T} collapsing 
$\mc T$ to point is a homotopy equivalence, since $\mc T$ is a contractible subcomplex of $\mc G$.
Then because $\mc G/\mc T$ is a graph with only one vertex, so it is a wedge of circles corresponding to the 
edges of $\mc G-\mc T$. 

Alternatively, we can see this more explicitly by picking a distinguised vertex $x$ from $\mc G$ as a basepoint, 
then for each edge $e=\{v,w\}$ in $\mc G-\mc T$, letting
\begin{equation}
\label{ECircuit}
\omega(e) := p_{x,v}\cdot \vec e \cdot p_{w,x}
\end{equation}
be the circuit formed by composing the directed edge $\vec e=(v,w)$ with simple paths $p_{x,v}$ and $p_{w,x}$ from 
$x$ to $v$ and $w$ to $x$. Since $\mc T$ is a spanning tree, these paths can be taken both lying wholly inside $\mc T$, 
and they are unique since $\mc T$ has no cycles. Thus $\omega(e)$ is well defined.
These circuits then generate $\pi_1(\mc G)$ since any circuit $c$ in $\mc G$ starting at $x$ is homotopic to the 
composition of circuits $\omega(e_0) \cdots \omega(e_k)$, where $e_0,\ldots,e_k,e_{k+1}=e_0$ are the edges of 
$\mc G-\mc T$ that $c$ goes through in order. The homotopy is given by contracting any shared chains of edges in 
$\omega(e_i)$ and $\omega(e_{i+1})$ that go from a shared vertex to $x$ and back again (note that edges in $c$ 
between $e_i$ and $e_{i+1}$ must be in $\omega(e_i)$ and $\omega(e_{i+1})$, otherwise we would have a cycle in 
$\mc T$).

\begin{lemma}
\label{LSpanning2}
For the tree $\mc T$ constructed in Lemma~\ref{LSpanning}, there is an ordering of the components $\mc H_j$
and a choice of vertex $b_j$ in each $\mc H_j$ such that for any vertex $x\in\mc H_j$:
(i) the simple path $p_{x,b_0}$ contains no vertices of $\mc H_k$ for $k>j$;
(ii) $p_{x,b_0}$ decomposes as $p_{x,b_0}=p_{x,b_j}\cdot p_{b_j,b_0}$;
(iii) the simple path $p_{x,b_j}$ is contained in the spanning graph $\mc U_j:=\mc T\cap\mc H_j$ of $\mc H_j$;
(iv) the simple path $p_{b_j,b_0}$ contains no edges and vertices in $\mc H_j$ except $b_j$.
\end{lemma}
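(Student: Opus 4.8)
The plan is to read off all four statements from the explicit rooted--tree structure that the construction in Lemma~\ref{LSpanning} places on $\mc T$. First I would note that that construction works verbatim starting from any subtree of $\mc G$ in place of a single vertex: so fix one path--component $\mc H_0$ of $\mc H$, a vertex $b_0$ of $\mc U_0 = \mc T\cap\mc H_0$, and run the construction of Lemma~\ref{LSpanning} beginning from $\mc T^0 := \mc U_0$. This makes $\mc T$ a tree rooted at $b_0$, so every path $p_{x,y}$ of the preceding discussion is the unique simple path of $\mc T$ between its endpoints, and for $y=b_0$ it is the ascending path to the root. Next, order the remaining components $\mc H_1,\mc H_2,\dots$ by the step $s_1<s_2<\cdots$ at which the construction first adjoins a vertex of each; at step $s_j$ a single edge $e_j=\{v_j,w_j\}$ is added with $v_j$ the new vertex lying in $\mc H_j$ and $w_j\in\mc T^{s_j-1}$, and along with $e_j$ the whole of $\mc U_j$ is adjoined, hanging below $v_j$. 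Set $b_j := v_j$.

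The one structural fact I need is that in the rooted tree $\mc T$ every vertex of $\mc H_j$ is a descendant of $b_j$, and consequently $w_j\notin\mc H_j$ and $e_j\notin\mc H$. This follows from how the ``whenever'' clause in the proof of Lemma~\ref{LSpanning} operates: the first time any vertex of $\mc H_j$ would be adjoined, \emph{all} of $\mc U_j$ is adjoined simultaneously, with its edges, hanging below the attaching vertex $b_j=v_j$; since $\mc U_j$ already spans $\mc H_j$, no further vertex or edge of $\mc H_j$ is ever added afterwards (an $\mc H_j$--edge would close a cycle in the tree $\mc T$). In particular $w_j$, which lies in $\mc T^{s_j-1}$ before $\mc H_j$ is touched, is not in $\mc H_j$; and since $\mc H_j$ is a whole path--component of $\mc H$, the edge $e_j$ --- which joins a vertex of $\mc H_j$ to one outside it --- cannot belong to $\mc H$.

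Now (i)--(iv) come out by dissecting $p_{x,b_0}$ for $x\in\mc H_j$. Since $\mc U_j$ is a connected subgraph of the tree $\mc T$, it is geodesically convex, so the $\mc T$--path between $x$ and $b_j$ stays inside $\mc U_j$; that is the first stretch of $p_{x,b_0}$, which is exactly (iii). The remainder is $e_j$ followed by the $\mc T$--path $p_{w_j,b_0}$; and since $\mc T^{s_j-1}$ is likewise a connected subtree of $\mc T$ containing both $w_j$ and $b_0$, that path stays inside $\mc T^{s_j-1}$, which contains no vertex of any $\mc H_k$ with $k\ge j$. Thus $p_{x,b_0}=p_{x,b_j}\cdot p_{b_j,b_0}$ with $p_{x,b_j}\subseteq\mc U_j$ and $p_{b_j,b_0}=e_j\cdot p_{w_j,b_0}$, which is (ii). Statement (iv) holds because $b_j$ is the only vertex of $p_{b_j,b_0}$ lying in $\mc H_j$ and $e_j\notin\mc H$, so $p_{b_j,b_0}$ meets $\mc H_j$ only in $b_j$; and statement (i) holds because $p_{x,b_j}\subseteq\mc U_j$ contributes only $\mc H_j$--vertices while $e_j$ and $p_{w_j,b_0}$ contribute no vertex of any $\mc H_k$ with $k\ge j$.

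I expect the only genuine content to be the structural fact of the second paragraph --- that once $\mc U_j$ is adjoined it hangs below $b_j$ and is never re--attached to $\mc H_j$ from elsewhere --- since this is precisely what prevents the ``up to $b_j$'' portion of $p_{x,b_0}$ from wandering out of $\mc H_j$ and back in, and what makes the ordering by attachment step enforce the ``no $\mc H_k$ for $k>j$'' clause. It rests only on $\mc T$ being acyclic, so there should be no real difficulty, just bookkeeping. One technical wrinkle worth a sentence is the legitimacy of starting the Lemma~\ref{LSpanning} construction from the subtree $\mc U_0$ instead of a single vertex, which is what lets the basepoint $b_0$ be chosen inside $\mc H_0$.
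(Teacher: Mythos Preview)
Your argument is correct and follows essentially the same route as the paper's: order the components by the step at which they are adjoined in the construction of Lemma~\ref{LSpanning}, take $b_j$ to be the attaching vertex, and read off (i)--(iv) from the fact that $\mc U_j$ hangs below $b_j$ as a subtree and that $\mc T^{s_j-1}$ contains no vertices of $\mc H_k$ for $k\ge j$. Your version is somewhat more explicit---you phrase things in rooted-tree language, invoke geodesic convexity of subtrees rather than ``uniqueness of simple paths'', and you handle the base case by starting the construction from $\mc U_0$ rather than a single vertex so that $b_0$ is the root---but these are refinements of presentation, not a different strategy.
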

\begin{proof}
Referring to the proof of Lemma~\ref{LSpanning}, we order the components so that $\mc H_j$ is the $j^{th}$ 
component adjoined and $b_j$ is the vertex $v$ in $\mc H_j$ that is connected to $\mc T^{i-1}$ by an edge $e$. 
Since the result is a tree and we haven't adjoined any $\mc H_k$ for $k>j$ at this point, $p_{x,b_0}$ exists and 
contains no vertices in $\mc H_k$.
The simple path $p_{b_j,b_0}$ starts by going through $e$ and into $\mc T^{i-1}$, and cannot go through $\mc H_j$ 
again since there are no other edges connecting it to $\mc T^{i-1}$. This also means $p_{x,b_0}$ must go through $b_j$,
and so decompose as $p_{x,b_j}\cdot p_{b_j,b_0}$. Lastly, $\mc U_j$ being a spanning tree of $\mc H_j$ and a 
subtree of $\mc T$, there is a simple path from $x$ to $b_j$ contained in both, which must be $p_{x,b_j}$ by 
uniqueness of simple paths.
\end{proof}

Next, we make precise the idea of a frame field being defined arbitrarily close to a singularity graph. 
Define a $3$-dimensional thickening of a graph $G$ as follows.

\begin{definition}
A $3$-\emph{thickening} $T_{\mc G}$ of graph $\mc G$ is a $3$-manifold built up as follows.
\begin{enumerate}
\item For each vertex $v$ in $G$ that is not a leaf, we adjoin a $3$-ball $D^3_v$.
\item For each leaf vertex $v$ in $G$, we adjoin a $2$-disk $D^2_v$.
\item For each edge $e=\{v_0,v_1\}$  in $G$ we adjoin a solid cylinder $C_e= D^2\times [0,1]$ by
gluing the ends $D^2\times \{i\}$, $i=1,2$, by either embedding into the $2$-sphere boundary of 
$D^3_{v_i}$ when $v_i$ is not a leaf, or embedding homeomorphically onto $D^2_{v_i}$ when it is a leaf. 
When not a leaf, we do this in such a way so that these embeddings do not overlap with any other 
cylinder ends embedded which correspond to any other edges incident to $v_i$.
\end{enumerate}
\end{definition}

Fix $M$ is a connected $3$-manifold (possibly with corners) embedded in $\mb R^3$ and $\mc S$ is a singularity graph 
embedded in $M$. As before, we can assume without much loss generality that $\mc S$ has no interior branches, 
no isolated point singularities, and singularities on the boundary $\bd M$ are point singularities corresponding to the leaf 
vertices of $\mc S$. Thus, we assume an embedding of a thickening $T_{\mc S}$ of $\mc S$ has been taking such that 
$\mc S$ is in the interior of $T_{\mc S}$, and:
\begin{enumerate}
\item vertices in the interior of $M$ are the non-leaf vertices and vertices on the boundary are the leaf vertices;
\item a non-leaf vertex $v$ is in the interior of $D^3_v$ and $D^3_v$ does not intersect the boundary $\bd M$; 
\item a leaf vertex $w$ is in the interior of $D^2_w$ and $D^2_v$ lies entirely inside $\bd M$;
\item each edge $e$ crosses the interior of the corresponding cylinder $C_e$, and $C_e$ does not intersect $\bd M$
anywhere except at either end when there is a leaf vertex there. 
\end{enumerate}

Let $M_{\mc S}$ be the submanifold of $M$ given by deleting from $M$ the interior of $T_{\mc S}$ as subspaces of $M$,
$$
M_{\mc S} := M \sm \Int_M(T_{\mc S}).
$$
Here we took the interior $\Int_M()$ as a subspace of $M$, not $\mb R^3$. Thus $\Int_M(T_{\mc S})$
contains the interior of the boundary intersection $T_{\mc S}\cap \bd M$ as subspaces of $\bd M$.
In this case, $M_{\mc S}$ is a $3$-manifold with corners along the surface boundary of $T_{\mc S}\cap \bd M$,
which is homeomorphic to a collection of $2$-disks, the corners being their circle boundaries. 
(The embedding of $\mc T_{\mc S}$ is analogous to the \emph{singularity tubes} about a singularity graph 
in~\cite{Liu2018, Crane2019}).

When we say a frame field is defined \emph{near the singularity graph} $\mc S$, we take this to mean that 
our embedding of $T_{\mc S}$ satisfying (1)-(4) above has been taken to be inside a small enough neighbourhood of
$\mc S$ so that the frame field is defined over all of $T_{\mc S}\sm\mc S$. For example, there is a frame field 
defined at least near the edges of $\mc S$ when frames satisfy edge alignment and edges are assigned an edge index.

Suppose there exists a $3$-frame field $f$ defined on $\bd M\sm\mc S$ and near the singularity graph $\mc S$.
Namely a map
$$
f\colon\seqm{(\bd M\sm\mc S)\cup (T_{\mc S}\sm\mc S)}{}{\mc O_3}
$$ 
defined on $T_{\mc S}\sm\mc S$, and on $\bd M$ except at point singularities corresponding to leaf vertices 
of $\mc S$. Notice $\bd M_{\mc S}$ is a submanifold of $(\bd M\sm\mc S)\cup (T_{\mc S}\sm\mc S)$, 
so $f$ retricts to a frame field
$$
f\colon\seqm{\bd M_{\mc S}}{}{\mc O_3}.
$$ 
Consider the following setup. Take:
\begin{enumerate}
\item Any regular $CW$-decomposition of $M_{\mc S}$  
$$
 sk^0 M_{\mc S}\subset sk^1 M_{\mc S}\subset sk^2 M_{\mc S}\subset sk^3 M_{\mc S} = M_{\mc S}
$$
(for example, any tetrahedral subdivision will do).
\item A spanning tree $\mc T$ of the graph $sk^1 M_{\mc S}$. Moreover, write $\bd M_{\mc S}$ as a disjoint union
$$
\bd M_{\mc S} := N_0 \sqcup\cdots\sqcup N_m
$$
of its path-connected components $N_i$. Then using Lemma~\ref{LSpanning}, and Lemma~\ref{LSpanning2}, 
assume $\mc T$ is taken such that 
$$
\mc U_i := \mc T\cap sk^1 N_i
$$ 
is a spanning tree of the subgraph $sk^1 N_i\subseteq sk^1 M_{\mc S}$, and the connected graph components 
$\mc H_i:=sk^1 N_i$ are ordered so that conditions (i) to (iv) in Lemma~\ref{LSpanning2} hold.
\item Denote the following differences of graphs
$$
\mc A \,:=\, sk^1 M_{\mc S}-\mc T
$$
$$
\bd\mc A \,:=\, sk^1 \bd M_{\mc S}-\mc T
$$
$$
\mc V_i \,:=\, sk^1 N_i-\mc U_i.
$$
Notice 
$$
\bd\mc A = \mc V_0 \sqcup\cdots\sqcup \mc V_m \subset\mc A.
$$
\item For each edge $e=\{v,w\}$ in $sk^1 M_{\mc S}$, choose a direction 
$$
\vec e=(v,w).
$$
\item Pick a basepoint vertex $b_i\in sk^1 N_i$ for each $i$, and ($\mc O_3$ being path-connected) let $\rho_i$ 
be any path in $\mc O_3$ from $f(b_i)$ to $f(b_0)$, with $\rho_0$ the constant path from $f(b_0)$ to itself.
\item When $e$ is an edge in $\mc V_i$, take the graph circuit in $sk^1 N_i$
$$
\phi(e) := p_{b_i,v},\cdot \vec e\cdot p_{w,b_i}
$$ 
where $p_{b_i,v}$ and $ p_{w,b_i}$ are the unique simple paths in the spanning tree $\mc U_i$ from $b_i$ to $v$
and from $w$ to $b_i$, and use this to define the loop in $\mc O_3$ based at $f(b_0)$
$$
\zeta(e) := \rho_i^{-1}\cdot f(\phi(e))\cdot \rho_i
$$
where $\rho^{-1}_i $ is the reverse path of $\rho_i$ from $f(b_0)$ to $f(b_i)$.
\item Denote $[\zeta(e)]\in 2\mc D_3$ the element of the binary octaheral group corresponding to the homotopy class of 
$\zeta(e)$ in the fundamental group $\pi_1(\mc O_3)\cong 2\mc D_3$ at basepoint $f(b_0)$.
\item Take
$$
\mc F := \{F_1,\ldots, F_\ell\}
$$ 
be the set $2$-dimensional cells of $sk^2 M_{\mc S}$ that do not lie enitrely inside the boundary $\bd M_{\mc S}$, 
i.e. are not $2$-cells in any $sk^2 N_i$ (in the case of a tetrahedral subdivision, the $F_i$ are non-boundary triangle facets).
%\item Let $\aleph_F$ denote the set of edges on the boundary of $F\in \mc F$ that are also edges in $\mc A$.
%\item Let $\widetilde\aleph_F\subseteq\aleph_F$ be the subset of edges in $\aleph_F$ that do not lie entirely inside 
%$\bd M_{\mc S}$, i.e. are not edges of $\bd\mc A$.
\item For each $F\in\mc F$ pick a cycle of edges going either clockwise or counter-clockwise around $F$'s boundary
$$
\sigma_F:=(e_0,\ldots,e_{n_F})
$$
with vertex sequence 
$\bd\sigma_F:=(v_0,\ldots,v_{n_F},v_{n_F+1}=v_0)$; $e_j=\{v_j,v_{j+1}\}$.
\item For each edge $e$ on the boundary of $F$, let
$$
\delta^F_e: = 
\begin{cases}
1 & \mbox{if }\vec e\mbox{ points in the same direction as }\sigma_F;\\
-1 & \mbox{if }\vec e\mbox{ points in the opposite direction of }\sigma_F.
\end{cases}
$$
\end{enumerate}

With this we can phrase a necessary and sufficient condition for a frame field to extend from its boundary and from 
singularity constraints. We aim the proof to be as constructive as possible so that an algorithm can be extracted 
from it. The elements $[\zeta(e)]$ are used to encode frame field boundary and singularity constraints 
in $M$ in terms of the fundamental group $\pi_1(\mc O_3)\cong 2\mc D_3$ at basepoint $f(b_0)$, which reduce to only 
boundary constraints on the submanifold $M_{\mc S}$. The paths $\rho_i$ are there to fix change-of-basepoint 
isomorphisms between the different boundary components.

\begin{theorem}
\label{TMain}
A $3$-frame field $f$ defined on $\bd M\sm\mc S$ and near the singularity graph $\mc S$ extends to a frame field 
$f\colon\seqm{M\sm\mc S}{}{\mc O_3}$ defined over all of $M$ outside $\mc S$ if and only if 
there exists a $y_e\in 2\mc D_3$ for each edge $e$ in $\mc A-\bd\mc A$, 
and an $x_i\in 2\mc D_3$ for each boundary component $i=1,\ldots,m$, with $x_0:=1$ for the zeroth component,
such that the monomial equations have a solution in $2\mc D_3$ for each $F\in \mc F$
\begin{equation}
\label{EMonomial}
\cprod{e\,=\,e_1,\ldots,e_{n_F}\,\in\,\sigma_F}{}{z_e^{\delta^F_e}}\,=\, 1
\end{equation}
where
$$
z_e  := 
\begin{cases}
y_e & \mbox{if }e\,\in\,$\mc A-\bd\mc A$;\\
x_i^{-1}[\zeta(e)]x_i & \mbox{if }e\,\in\,\mc V_i\subseteq\bd\mc A;\\
1 & \mbox{otherwise}.
\end{cases}
$$
Note the product here is not commutative since $2\mc D_3$ is not abelian, so the order of the factors in the 
monomials matter.
\end{theorem}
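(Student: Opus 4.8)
\emph{Proof plan.} The argument is obstruction theory for the submanifold $M_{\mc S}$, organized around the spanning tree $\mc T$ so that the only nontrivial obstruction — which lives in $\pi_1(\mc O_3)\cong 2\mc D_3$ by~\eqref{EPi1} — is recorded cell-by-cell by the monomials in~\eqref{EMonomial}. The first step is a reduction. Since $M\sm\mc S=M_{\mc S}\cup(T_{\mc S}\sm\mc S)$ with both pieces closed in $M\sm\mc S$ and meeting inside $\bd M_{\mc S}$, and since $f$ is already prescribed on $(\bd M\sm\mc S)\cup(T_{\mc S}\sm\mc S)\supseteq\bd M_{\mc S}$, the pasting lemma shows that $f$ extends over $M\sm\mc S$ if and only if $f|_{\bd M_{\mc S}}$ extends over $M_{\mc S}$: restrict one way, and glue such an extension to the already-given $f$ on $T_{\mc S}\sm\mc S$ the other way (they agree on the overlap, since any valid extension must agree with $f$ on all of $\bd M_{\mc S}$). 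So from now on the goal is to extend $f|_{\bd M_{\mc S}}\colon\bd M_{\mc S}\to\mc O_3$ over the regular $CW$-decomposition of $M_{\mc S}$, one skeleton at a time, noting that $f$ is actually defined on all of each boundary component $N_i$, so every $2$-cell of $M_{\mc S}$ not in $\mc F$ (and, later, every $3$-cell) either already carries $f$ or presents no obstruction.

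Next I would handle the $1$-skeleton and explain where the variables come from. Because $\mc O_3$ is path-connected, $f|_{\bigsqcup_i\mc H_i}$ (with $\mc H_i=sk^1 N_i$, on which $f$ is given) always extends over $sk^1 M_{\mc S}$; I classify these extensions up to homotopy rel $\bigsqcup_i\mc H_i$. Building $\mc T$ out of $\bigsqcup_i\mc U_i$ as in Lemma~\ref{LSpanning}, an edge adjoined together with a brand-new vertex contributes no freedom (an interval retracts onto an endpoint), whereas the edge joining each $\mc U_i$ ($i\geq1$) to the growing tree carries a path in $\mc O_3$ between two prescribed values, whose class rel endpoints is a torsor over $\pi_1(\mc O_3)\cong 2\mc D_3$; the reference path $\rho_i$ trivializes this torsor, and $x_i$ is the resulting coordinate, with $x_0:=1$ since $b_0\in\mc U_0$. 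Passing from $\mc T$ to $sk^1 M_{\mc S}$, the non-tree edges in $\mc V_i\subseteq\bd\mc A$ already carry $f$, while each $e\in\mc A-\bd\mc A$ carries a path between prescribed vertex values, again a torsor over $2\mc D_3$, coordinatized by $y_e$. Hence the extensions of $f$ over $sk^1 M_{\mc S}$, up to homotopy rel $\bigsqcup_i\mc H_i$, are in bijection with the tuples $(x_i)_{i=1}^{m}\times(y_e)_{e\in\mc A-\bd\mc A}$ in $(2\mc D_3)^{m}\times(2\mc D_3)^{|\mc A-\bd\mc A|}$.

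Then comes the $2$-skeleton. Fix such an extension. For $F\in\mc F$, $f$ extends over $F\cong D^2$ if and only if $f|_{\bd F}\colon S^1\to\mc O_3$ is null-homotopic, i.e. trivial in $\pi_1(\mc O_3)\cong 2\mc D_3$. Basing this loop at $f(b_0)$ by running, at each vertex $v$, along $f$ of the unique $\mc T$-path from $b_0$ to $v$, each oriented edge $e$ acquires a loop $\lambda_e$ with class $[\lambda_e]\in 2\mc D_3$, and telescoping these segments around the cycle $\sigma_F$ expresses $[f|_{\bd F}]$ as a conjugate of $\prod_{e\in\sigma_F}[\lambda_e]^{\delta^F_e}$. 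A direct computation — using that a loop lying in $\mc T$ is $f$ of a nullhomotopic loop, our choices of $x_i$ and $y_e$, and Lemma~\ref{LSpanning2}(ii)--(iv) to split the $\mc T$-path through $b_i$ and recognize $\phi(e)$ and $\zeta(e)$ — gives $[\lambda_e]=1$ for $e\in\mc T$, $[\lambda_e]=y_e$ for $e\in\mc A-\bd\mc A$, and $[\lambda_e]=x_i^{-1}[\zeta(e)]x_i$ for $e\in\mc V_i$; in short $[\lambda_e]=z_e$. Therefore $f|_{\bd F}$ extends over $F$ iff $\prod_{e\in\sigma_F}z_e^{\delta^F_e}=1$, and ranging over all $F\in\mc F$ shows that $f$ extends over $sk^2 M_{\mc S}$ for some choice of extension over $sk^1 M_{\mc S}$ precisely when the system~\eqref{EMonomial} is solvable in $2\mc D_3$. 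Finally, since $\pi_2(\mc O_3)=0$ by~\eqref{EZero} and $sk^3 M_{\mc S}=M_{\mc S}$, any extension over $sk^2 M_{\mc S}$ extends further over every remaining $3$-cell; assembling these steps with the reduction proves the theorem.

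I expect the crux to be the cell-by-cell identification $[\lambda_e]=z_e$: because $2\mc D_3$ is nonabelian the basepoint-change conjugations cannot be suppressed, so one must check carefully that tree edges contribute the identity, that the telescoping around $\sigma_F$ is exact (signs $\delta^F_e$ included, and the cyclic/orientation ambiguity of $\bd F$ harmless), and — essential for the ``if'' direction — that the $x_i$ and $y_e$ genuinely vary independently over all of $2\mc D_3$, so that solvability of~\eqref{EMonomial} is equivalent to, not merely necessary for, extendability.
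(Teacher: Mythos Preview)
Your proposal is correct and follows essentially the same approach as the paper: reduce to extending $f|_{\bd M_{\mc S}}$ over $M_{\mc S}$, then run obstruction theory up the skeleta of the regular $CW$-decomposition, parametrizing the $1$-skeleton extensions by the $x_i$ (via the distinguished tree edges joining the $\mc U_i$ to $\mc T$, compared against the reference paths $\rho_i$) and the $y_e$, identifying the $2$-cell obstruction as the monomial~\eqref{EMonomial} via the median/telescoping computation along $\mc T$ and Lemma~\ref{LSpanning2}, and finishing with $\pi_2(\mc O_3)=0$. The paper does this more constructively---explicitly writing down $f$ on each edge and the homotopy $\sigma_F\simeq\bigvee_j\omega(e_{k_j})^{\delta_{k_j}}$---while you package the same content in torsor/obstruction-theory language, but the substance, including the crux identification $[\lambda_e]=z_e$ you flag, is the same.
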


\begin{proof}[Proof part(i):]
We first prove this condition is necessary. Suppose $f\colon\seqm{M\sm\mc S}{}{\mc O_3}$ exists.
Notice $f$ restricts to a frame field on the submanifold $M_{\mc S}\subset M\sm\mc S$.

Take any $F\in \mc F$ and denote $\sigma:=\sigma_F=(e_0,\ldots,e_{n_F})$.
Note $\sigma$ is a nullhomotopic loop in $F$ since we can homotope it to a point by contracting to the interior of $F$. 
Since the frame field $f$ is defined over the interior of each $F$, $f$ maps this nullhomotopy to a nullhomotopy of the 
loop $f(\sigma)$ in $\mc O_3$. In other words $[f(\sigma)]=0$ in $\pi_1(\mc O_3)$ (at any basepoint on $\bd F$).

For edges $e=\{v,w\}$ in $\mc A$, take the circuit
$$
\omega(e) := p_{b_0,v}\cdot \vec e \cdot p_{w,b_0}
$$
when $\vec e=(v,w)$, where $p_{v,w}$ denotes the unique simple path in the spanning tree $\mc T$ from vertex $v$ to $w$. 
A reverse path $p_{b,a}$ of $p_{a,b}$ is sometimes denoted $p^{-1}_{a,b}$. 

Suppose two edges $e$ and $e'$ in the cycle $\sigma$ are in $\mc A$, and every edge in a simple path $p_{v,w'}$ in $\sigma$ 
between these (going either way around $\sigma$) is not in $\mc A$, in other words $p_{v,w'}$ is in the spanning tree $\mc T$. 
We have three simple paths $\alpha_1:=p_{v,w'}$, $\alpha_2:=p_{v,b_0}$, and $\alpha_3:=p_{w',b_0}$ in $\mc T$ 
between three vertices $v$, $w'$, and $b_0$. The union of $\alpha_2$ and $\alpha_3$ is a subtree of $\mc T$ that contains a 
simple path from $v$ to $w'$, which must be $\alpha_1$ by uniqueness of simple paths in $\mc T$. There is then a distinguished 
vertex $u$, known as the \emph{median} of $v$, $w'$, and $b_0$, such that our paths decompose as
$$
\alpha_1 = \beta_1\cdot \beta_2
$$
$$
\alpha_2= \beta_1\cdot \beta_3
$$
$$
\alpha_3 = \beta_2^{-1}\cdot \beta_3
$$
in terms of simple paths $\beta_1:=p_{v,u}$, $\beta_2:=p_{u,w'}$, and $\beta_3:=p_{u,b_0}$ having only the vertex $u$ in 
common. We then have a homotopy of paths from $v$ to $w'$
\begin{equation}
\label{EMedian}
\alpha_1 \,=\, \beta_1\cdot \beta_2 \,\simeq\, \beta_1\cdot \beta_3 \cdot \beta_3^{-1} \cdot \beta_2 \,=\, \alpha_2\cdot \alpha_3^{-1}
\end{equation}
where the endpoints are $v$ and $w'$ are fixed in the homotopy, and the homotopy is within $\mc T$.

Let $0 \leq k_0 < \cdots < k_r \leq n_F$ be the subsequence indexing the edges $e_{k_j}$ in $\sigma$ that are edges in 
$\mc A$, and let $\alpha_{1,j}$ be the simple path in $\sigma$ between $e_{k_{j-1}}$  and $e_{k_j}$ (where $k_{-1}:=k_r$), 
which decomposes as $\alpha_{2,j}\cdot \alpha_{3,j}^{-1}$ as in~\ref{EMedian}. 
Letting $\bigvee_i c_i$ denote a composition $c_1\cdot c_2\cdot c_3\cdots$ of paths $c_i$ end-to-end, 
and writing $\delta_i:=\delta^F_{e_i}$ and $\sigma$ as a composition of paths of single directed edges $\vec e_i$,
\begin{equation}
\label{EBouquet}
\sigma \,=\,  \cvee{i=0,\ldots,n_F}{}{\vec e_i^{\delta_i}}  
\,= \, \cvee{j=0,\ldots,r}{}{(\alpha_{1,j}\cdot\vec e_{k_j}^{\delta_{k_j}})}
\,\simeq\, \cvee{j=0,\ldots,r}{}{(\alpha_{3,j}^{-1}\cdot\vec e_{k_j}^{\delta_{k_j}}\cdot \alpha_{2,j+1}) }
\,=\, \cvee{j=0,\ldots,r}{}{\omega(e_{k_j})^{\delta_{k_j}}}.
\end{equation}
The last of these is a bouquet of loops $\omega(e_{k_j})^{\delta_{k_j}}$ based at $b_0$, which $f$ maps to the bouquet of loops 
$\bigvee_j f(\omega(e_{k_j}))^{\delta_{k_j}}$ in $\mc O_3$ based at $f(b_0)$, which is (unbased) homotopic to $f(\sigma)$. 
Since $f(\sigma)$ is nullhomotopic,
$$
1 \,=\, [f(\sigma)] \,=\, \cprod{j=0,\ldots,r}{}{[f(\omega(e_{k_j}))]^{\delta_{k_j}}}
$$
in $\pi_1(\mc O_3)\cong 2\mc D_3$. Also, notice when $e:=e_{k_j}$ is in $\bd\mc A$ (i.e. is on $\bd M_{\mc S}$ and not in $\mc T$), 
both its endpoint vertices are in $\mc U_i$ for some $i$, so by Lemma~\ref{LSpanning2} both $\alpha_{2,j+1}$ and $\alpha_{3,j}$ decompose 
as simple paths that first go within $\mc U_i$ from either vertex of $e$ to the vertex $b_i$, then both go from $b_i$ to $b_0$ via the 
same simple path $p_{b_i,b_0}$ ($b_i$ is the median vertex here). So we have $\omega(e)=p_{b_i,b_0}^{-1}\cdot \phi(e)\cdot p_{b_i,b_0}$ 
where $\phi(e)$ is a cycle in $sk^1 N_i$ through $e$ and $b_i$. Then taking
$$
\chi_i\,:=\, \rho_i^{-1}\cdot f(p_{b_i,b_0}).
$$
we have
$$
f(\omega(e)) \,\simeq\, \chi_i^{-1}\cdot \rho_i^{-1}\cdot f(\phi(e))\cdot \rho_i\cdot \chi_i \,=\, \chi_i^{-1}\cdot\zeta(e)\cdot \chi_i.
$$ 
So $[f(\omega(e))]=[\chi_i]^{-1}[\zeta(e)][\chi_e]\in\pi_1(\mc O_3)$ when $e$ is in $\bd\mc A$.
Letting $x_i=[\chi_i]$  and $y_{e}=[f(\omega(e))]$ when $e\in \mc A-\mc\bd A$ finishes the proof.
(notice $x_0=1$ since $f(p_{b_0,b_0})$, $\rho_0$, and $\chi_0$ are constant paths at $f(b_0)$).

\end{proof}

\begin{proof}[Proof part(ii):]
Now we show the condition is sufficient. Most of the setup and notation is recycled from part (i). 

Since $f$ is defined on $\bd M\sm\mc S$ and near the singularity graph $\mc S$, $f$ restricts to a frame field
$f\colon\seqm{\bd M_{\mc S}}{}{\mc O_3}$. Our first task is to extend this frame field to all of $M_{\mc S}$ 
when the monomial equations~\ref{EMonomial} have a solution for every $F\in \mc F$. For the solutions 
$x_j,y_e\in2\mc D_3\cong\pi_1(\mc O_3)$ to these equations, we let $\chi_j$ and $\gamma_e$ denote a
choice of loop based at $f(b_0)$ in $\mc O_3$ that represents the homotopy classes of $x_j$ and $y_e$, 
in other words 
$$
x_j = [\chi_j]
$$ 
$$
y_e = [\gamma_e]
$$ 
where $\chi_0$ can be taken as the constant path at $f(b_0)$ since $x_0=1$.
Since $f$ is already defined on all vertices, edges, and $2$-cells lying on the boundary $\bd M_{\mc S}$, 
we focus on those in the interior, working inductively up skeleta by extending from vertices, to edges, to $2$-cells,
and finally to $3$-cells. 

For each vertex $v$ in $sk^1 M_{\mc S}$ that is not in $\bd M_{\mc S}$, choose any $o_v\in\mc O_3$ and define 
$f(v):=o_v$. To extend this to edges, consider the set
$$
\mc P:=\{d_1:=\{b_1,u_1\},\ldots,d_m:=\{b_m,u_m\}\}
$$
of edges in $\mc T$ such that $d_j$ is the first edge in the simple path $p_{b_j,b_0}$.
By our assumptions on $\mc T$ coming from Lemma~\ref{LSpanning2}, we have $d_j\neq d_k$ when $j\neq k$
since $p_{b_j,b_0}$ contains no vertices of $sk^1 N_k$ when $k>j$. 
Define $f(\vec e)$ for every edge $e$ in $\mc T$ and not in $\bd M_{\mc S}$  (in order) as follows: 
\begin{itemize}
\item[(a)] First, for those $e\nin\mc P$, define $f(\vec e)$ to be any choice of path in $\mc O_3$ from 
$f(v)$ and $f(w)$ where $\vec e=(v,w)$ (which exists since $\mc O_3$ is path-connected). 
\item[(b)] Set $u_0:=b_0$, $\vec d_0$ the empty path from $b_0$ to itself.
Then for each $d_j\in\mc P$, $j=1,\ldots,m$, set $\vec d_j=(b_j,u_j)$, and for each $j$ define the path 
$$
f(\vec d_j)\,:=\,\rho_j\cdot\chi_j\cdot f(p_{u_j,b_0})^{-1}
$$ 
 from $f(b_j)$ to $f(u_j)$.
We define these going in order from $j=0,\ldots,m$ so that $f$ is defined on every edge of $p_{u_j,b_0}$ 
as we go since (by Lemma~\ref{LSpanning2})  $p_{b_j,b_0}=\vec d_j\cdot p_{u_j,b_0}$  contains none of 
the edges $d_k$ for $k>j$. (Note $f(\vec d_0)$ is just the constant path at $f(b_0)$).
\end{itemize}
Lastly, for those edges $e$ that are both not in $\mc T$ (are in $\mc A$) and not in $\bd M_{\mc S}$, define
\begin{equation}
\label{EInterior}
f(\vec e)\, :=\, f(p_{v,b_0})\cdot \gamma_e\cdot f(p_{w,b_0})^{-1}
\end{equation}
where $\vec e=(v,w)$, and the paths $f(p_{v,b_0})$ and $f(p_{w,b_0})$ are already defined from above since the simple 
paths $p_{v,b_0}$ and $p_{w,b_0}$ consist of edges in $\mc T$. Note $f(\vec e)$ restricts to $f(v)$ and $f(w)$ on its 
vertices as required. This defines $f$ on all vertices and edges.

We are left to extend $f$ to every $2$-cell $F$ not in $\bd M_{\mc S}$ (i.e. the $2$-cells in $\mc F$), then to every 
$3$-cell. Suppose any given $F\in\mc F$ has the monomial equation~\ref{EMonomial} satisfied. Take the homotopy of
$\sigma:=\sigma_F$ to the bouquet of loops $\omega(e_{k_j})^{\delta_{k_j}}$ in~\ref{EBouquet} with each $e_{k_j}$
on the boundary of $\sigma$ and not in $\mc T$; $\omega(e) := p_{v, b_0}^{-1}\cdot \vec e \cdot p_{w,b_0}$ when 
$\vec e=(v,w)$. Notice there is at least one such $e_{k_j}$, or else we would have a cycle in $\mc T$ going around
the boundary of $F$. Also, notice when $e:=e_{k_j}$ is in $\mc A-\bd\mc A$ by~\ref{EInterior} there is a homotopy
$$
f(\omega(e)) \,=\,  f(p_{v,b_0})^{-1}\cdot f(\vec e) \cdot f(p_{w,b_0})  \,\,\simeq \,\, \gamma_e.
$$
So $[f(\omega(e))] = [\gamma_e] = y_e$.
On the other hand, when $e$ is in $\mc V_i\subseteq \bd\mc A$ for some $i$, as we saw in part (i)
$\omega(e)=\beta^{-1}\cdot \phi(e)\cdot \beta $ where $\beta:= p_{b_i,b_0}$  and $\phi(e)$ is a cycle
in $sk^1 N_i$. From $(b)$ we have $\beta = \vec d_i\cdot p_{u_i,b_0}$  and
$f(\vec  d_i)=\rho_i\cdot\chi_i\cdot f(p_{u_j,b_0})^{-1}$,  so
$f(\beta) = f(\vec d_i)\cdot f(p_{u_i,b_0}) \simeq \rho_i\cdot \chi_i$, and
$$
f(\omega(e)) \,=\,  f(\beta)^{-1}\cdot f(\phi(e)) \cdot f(\beta)  
\,\,\simeq \,\, \chi_i^{-1}\cdot\rho_i^{-1}\cdot f(\phi(e))\cdot \rho_i\cdot\chi_i \,=\,  
\chi_i^{-1}\cdot \zeta(e)\cdot\chi_i.
$$
Thus $[f(\omega(e))] = x_i^{-1}[\zeta(e)]x_i$.  We have shown that $[f(\omega(e))] = z_e$ for those $e:=e_{k_j}$, 
$j=1,\ldots,r$ on the boundary of $F$ that are not in $\mc T$, and since $z_e$ is just $1$ for the rest of the edges,
$$
[f(\sigma)] \,=\, \cprod{j=0,\ldots,r}{}{[f(\omega(e_{k_j}))]^{\delta_{k_j}}} \,=\,
\cprod{e\,=\,e_1,\ldots,e_{n_F}\,\in\,\sigma}{}{z_e^{\delta^F_e}}\,=\, 1
$$
which implies $f(\sigma)$ is nullhomotopic. We can then extend $f$ from the edges on the boundary cycle $\sigma$ of the $2$-cell 
$F$ into the interior of $F$ by following this nullhomotopy. Namely, $F\cong D^2\cong (S^1\times [0,1])/(S^1\times \{1\})$, 
and $f$ maps  $S^1\times \{t\}$ via nullhomotopy at parameter $t$, mapping to a single point at $t=1$. 
This defines $f$ on every $2$-cell in $\mc F$, and so $f$ is now defined on every $2$-cell in $sk^2 M_{\mc S}$.

Finally, to extend $f$ to the interiors of $3$-cells of $sk^3 M_{\mc S} = M_{\mc S}$, since $\pi_ 2(\mc O_3)=0$, 
$f$ is nullhomoptic on the $2$-sphere boundaries of each $3$-cell $C\cong D^3\cong (S^2\times [0,1])/(S^2\times \{1\})$,
so we can extend $f$ into their interior similar as we did for the $2$-cells. 

This defines $f$ on all of $M_{\mc S}$. Since $M\sm\mc S\cong M_{\mc S}\cup (T_{\mc S}\sm \mc S)$ glued along $\bd  M_{\mc S}$,  
and $f$ is defined on all of $T_{\mc S}\sm \mc S$ such that it agrees with $f$ on $\bd  M_{\mc S}$ where it is glued, we obtain a 
frame field $f\colon\seqm{M\sm\mc S}{}{\mc O_3}$.

\end{proof}

\begin{bigremark}
\label{RTMain}
The statement for Theorem~\ref{TMain} remains true when any $\bd M$ is replaced with a compact submanifold $N$ embedded smoothly 
in $\bd M$. The proof is independent of which submanifold is taken, we only require our chosen $CW$-decomposition of $M_{\mc S}$ to 
restrict to a $CW$-decomposition of $N$. More generally, $N$ can be any subcomplex of $\bd M_{\mc S}$ in the $CW$-decomposition, 
not necessarily a submanifold. 
Also, the only intrinsic properties of $\mc O_3$ used in the proof are: $\pi_1(\mc O_3)\cong 2\mc D_3$ and $\pi_2(\mc O_3)=0$,
and that $\mc O_3$ is path connected. The space $\mc O_3$ could just as well be replaced with any other path connected space $X$ 
with trivial second homotopy group, as long as $2\mc D_3$ is at the same time replaced with $\pi_1(X)$.
\end{bigremark}

\subsection{Determining homotopy classes of boundary constraints}

Before the algebraic system in Theorem~\ref{TMain} can be solved, the elements $[\zeta(e)]$ in $2\mc D_3\cong\pi_1(\mc O_3)$ that 
encode boundary and singularity constraints must be determined. To this end we can use the monodromy for the $2\mc D_3$-fibered universal 
covering $\seqm{S^3}{r}{\mc O_3}$. Here $r:=p\circ \bar p$ factors through the covers $\seqm{S^3}{\bar p}{SO(3)}$ and
$\seqm{SO(3)}{p}{\mc O_3}$ with fibers $\mb Z_2$ and $\mc D_3$ respectively. Picking a frame $\aleph\in \mc O_3$ and a unit quaternion 
$\mf q\in S^3$ as basepoints such that $r(\mf q)=\aleph$, the elements in the $2\mc D_3$-fiber at $\mf q$ correspond to the $48$ unit quaternions 
obtained from $24$ matrices $\mf m_i$ in $SO(3)$, each of which is given by positive determinant column permutations and reflections of the matrix 
$\bar p(\mf q)$.
Each quaternion among the $48$ is then one of $\bar p^{-1}(\mf m_i)=\{\mf a_i, -\mf a_i\}$ for some $i$, with $\mf q$ corresponding to the the identity
 element in $2\mc D_3$.

Assuming we chose $\aleph$ so that the loop (closed path) $\zeta(e)$ starts at basepoint frame $\aleph:=f(b_0)$, by the path lifting property
of the covering $r$, $\zeta(e)$ lifts to a unique path $\gamma$ in $S^3$ that starts at $\mf q$. The end point of $\lambda$ is then another 
quaterion $\mf q'$ lying in the same fiber as $\mf q$, i.e. $r(\mf q') = \aleph$. If $\mf q$ corresponds to the identity in its $2\mc D_3$-fiber, 
then the homotopy class $[\zeta(e)]\in\pi_1(\mc O_3)$ is exactly $\mf q'$ as an element of the $2\mc D_3$-fiber. 

To obtain $\mf q'$ explicitly all we have to do is construct the lift $\gamma$. This can be done concretely by subdividing the path $\zeta(e)$ into 
sufficiently small paths $I_i$ joined end-to-end, then defining the lift $\lambda_i\subset S^3$ of $I_i$ inductively in order starting with $i=1$ at the 
basepoint. Supposing $\lambda_{i-1}$ is already defined so that $\lambda_i$ is defined on its first endpoint $a_i$, if $I_i$ is taken small enough 
we can form a path $\lambda_i$ in $S^3$ as the subset of unit quaternions in $r^{-1}(I_i)$ that are within a small $\epsilon$ distance of $a_i$. 
Here $\epsilon$ is fixed before subdividing to be within a good margin of safety smaller than the smallest distance between any two quaternions
that are in the same $2\mc D_3$-fiber. Each quaternion $a$ along $\lambda_i$ is then the nearest quaternion to $a_i$ in its $2\mc D_3$-fiber. 
The union of the $\lambda_i$ defines our path $\lambda$.

\subsection{Solving the system}

The next difficulty is in finding a $CW$-decomposition of $M_{\mc S}$ and a solution to the system monomial equations~\ref{EMonomial} 
with respect to this decomposition -- given a solution exists. While there are robust algorithms that can generate a tetrahedral decomposition  
of $M_{\mc S}$ for us, the resulting decomposition will likely have a large number of facets, so we end up with a complicated system to solve. 
A regular $CW$-decomposition can be drastically simpler in contrast. Take for instance a solid $3$-ball, which can be decomposed with 
$2$, $2$, $2$, and $1$ numbers of cells in dimensions $0$, $1$, $2$, and $3$, or a solid torus, which can be decomposed with $4$, $8$, $6$, 
and $2$ numbers cells in dimensions $0$, $1$, $2$, and $3$. In the case of the solid torus, there are only two $2$-cells that are not entirely 
on the boundary, so the system consists of only two monomial equations. In the case of the $3$-ball the system is empty since there are no 
$2$-cells that are not entirely on the boundary, so the system is satisfied vacuously, and a frame field can always be extended from one that 
is defined everywhere on its $2$-sphere boundary (though not boundary aligned, so there are no boundary point singularities). This also follows 
from $\pi_2(\mc O_3)=0$.

Fortunately we can modify any given tetrahedral decomposition into a simpler regular $CW$-decomposition. 
This can be done iteratively by first merging adjacent tets to turn them into generic cells, then merging newly-created adjacent cells, while the system 
is simplified in the process. Merging two adjacent tets by removing their shared facet does not change the decomposition being regular, and removes 
one equation corresponding to the facet they share, leaving a generic $3$-cell in their place. Iterating this process, we end up merging adjacent 
$3$-cells as long as the patch-work of $2$-cells that they share on their boundary is homeomorphic to a $2$-disk. The shared $2$-cells
are removed along with the edges and vertices they contain that are not on the boundary of this patch-work. The result of this removal is then 
still a $3$-cell, and the $CW$-decomposition remains regular. In the process every monomial equation corresponding to the shared $2$-cells is 
removed from the system. Once we have reached the limit of merging $3$-cells, adjacent $2$-cells can in some cases be merged in a similar manner, 
as long as they both lie on the boundary between two $3$-cells and they share $1$-cells on their boundaries that form a single continuous path. 
This time the monomials corresponding to these cells are merged into a single monomial. 
Finally, paths of edges can be merged into a single edge when each edge in the path lies on the boundary of the same set of $2$-cells and $3$-cells
-- though our monomials will remain the same in this case since at most one of the path edges is not in the spanning tree.
While the system becomes simpler as we go, some of the setup has to be recomputed in the end (e.g. the classes $[\zeta(e)]$). 
Note that aside from this merging process there are well developed mathematical tools for simplifying $CW$-decompositions, namely 
\emph{Discrete Morse theory}~\cite{forman}.

\subsection{Simplifying boundary constraints}

As much as the system is simpler when the $CW$-decomposition is efficient, one possible limiting factor is the presence of boundary 
constraining classes $[\zeta_e]$. In the not-so-interesting scenario when there no boundary constraints the system can be solved trivially by setting all variables 
to $1$, so in this direction, it may be helpful to get rid of any $[\zeta_e]$ that are in some sense redundant. To see that this is plausible, suppose instead of 
asking in Theorem~\ref{TMain} for a frame field $f\colon\seqm{M\sm\mc S}{}{\mc O_3}$ that extends one constrained on the boundary and near singularities,
we ask for one that extends it \emph{only up-to-homotopy}. Namely, we want $f$ to exist on all of $M_{\mc S}$ such that the restriction
$f\colon\seqm{\bd M_{\mc S}}{}{\mc O_3}$ to the boundary surface $\bd M_{\mc S}$ is only homotopic to a provided frame field $f'$ defined on 
$\bd M_{\mc S}$. This is almost as good as the original statement since we can attach a collar $\bd M_{\mc S}\times [0,1]$ to $\bd M_{\mc S}$
by gluing along $\bd M_{\mc S}\times \{0\}$ (the space resulting from the gluing is still homeomorphic $M_{\mc S}$ by the collar neighbourhood theorem), 
then we define a frame field adhering to the original boundary constraints $f'$ by mapping $\bd M_{\mc S}\times \{t\}$ to $\mc O_3$ along the homotopy 
$f_{|\bd M_{\mc S}}\cong f'$. With this loosening of constraints we might expect a smaller a number of boundary constraining classes in this reformulation of 
Theorem~\ref{TMain}. At the same time we can avoid having to predefine a surface frame field or cross field everywhere, only where our simplified constraints 
tell us to.

To this end we should start by looking at homotopy classes of maps \seqm{\mf S_g}{}{\mc O_3} where $\mf S_g$ is a closed orientable surface of genus $g$ 
(among which $\bd M_{\mc S}$ belongs). 
The related problem of counting homotopy classes of cross fields has been studied in~\cite{Ray2008}; if $\mf S_{g,b}$ is a surface of genus $g$ with $b$ holes 
bounded by circles, then the set of homotopy classes of cross fields on $\mf S_{g,b}$ is isomorphic to $\mb Z^{2g+b-1}$. Our problem for boundary frame fields is 
actually a bit simpler since there are no holes (there are no point singularities, singularity graphs simply add handles to $\bd M_{\mc S}$ upping its genus), 
and we aren't restricting ourselves to surface alignment anywhere -- at least not strictly since everything is up-to-homotopy. 

Let $\vbr{X,Y}$ denote the set of basepointed homotopy classes of maps \seqm{X}{}{Y} between spaces $X$ and $Y$. 
Recall a closed orientable surface $\mf S_g$ of genus $g$ is homeomorphic to the $CW$-complex obtained by attaching a $2$-cell to a bouquet of 
$2g$ circles $W_{2g}:=\bigvee^{2g} S^1$. The attaching map for the $2$-cell
$$
\alpha\wcolon\seqm{S^1}{}{W_{2g}}
$$
is in the homotopy class corresponding to the product $[a_1,b_1]\cdots[a_g,b_g]$ in the free group on $2g$ generators
$\pi_1(W_{2g})\cong \vbr{a_1,b_1,\ldots,a_g,b_g}$ in $\mb Z$, where $[a_i,b_i]$ denotes the commutator $a_i b_i a^{-1}_i b^{-1}_i$. 
Consider the following two properties of maps from $\mf S_g$ to $\mc O_3$:
\begin{enumerate}
\item Since $\mf S_g$ is the homotopy cofiber of $\alpha$, by the homotopy extension property of homotopy cofibrations a map \seqm{W_{2g}}{\bar f}{\mc O_3} 
extends to a map \seqm{\mf S_g}{f}{\mc O_3} if and only if $\bar f\circ \alpha$ is nullhomotopic. [Proof:, take the inclusion of the $1$-skeleton 
\seqm{W_{2g}}{\iota}{\mf S_g}, and note $\iota\circ\alpha$ is nullhomotopic. If $f$ exists, $\bar f\circ \alpha$ is nullhomotopic since $\bar f=f\circ\iota$ by definition. 
Conversely, if $\bar f\circ \alpha$ is nullhomotopic, then we can extend $\bar f$ to a map $f$ by mapping the interior of the attached $2$-cell 
$\Int(D^2)\cong S^1\times(0,1]/S^1\times\{1\}$ at each $S^1\times\{t\}$ via this nullhomotopy.]

\item Since $\pi_2(\mc O_3)=0$, any two maps $\seqm{\mf S_g}{f}{\mc O_3}$ and $\seqm{\mf S_g}{f'}{\mc O_3}$ that agree on $W_{2g}$ must be homotopic
to each other. [Proof: we show that $f$ and $f'$ are homotopic along a homotopy that is fixed when restricted to $W_{2g}$, where they agree; we 
denote their common restriction $\seqm{W_{2g}}{\bar f}{\mc O_3}$. Consider the map $\seqm{S^2}{\omega}{\mc O_3}$ defined by mapping the equator 
$S^1\subset S^2$ via $\bar f\circ\alpha$, and mapping the interiors of its top and bottom $2$-disk hemispheres $D_a^2$ and $D_b^2$ (which are bounded by the equator) 
respectively via $f$ and $f'$ restricted to the interior of the $2$-cell of $\mf S_g$. 
Continuity is not broken since the boundaries of $D_a^2$ and $D_b^2$ map to $\mc O_3$ identically via $\bar f\circ\alpha$, which agrees with how the $2$-cell's boundary 
is mapped by $f$ and $f'$ after gluing to $W_{2g}$. Since $\pi_2(\mc O_3)=0$, $\omega$ is nullhomotopic, so there exists a map 
$\seqm{D^3}{\widehat\omega}{\mc O_3}$ that restricts to $\omega$ on the boundary of the $3$-ball $D^3$. Notice we can homotope $D_a^2$ to $D_b^2$ in $D^3$ 
via a homotopy of embeddings of disks that stays fixed on their common boundary along the equator. We can then define a homotopy of $f$ to $f'$ that is fixed everywhere 
except the interior of the $2$-cell (i.e. fixed on $W_{2g}$) by mapping the interiors of the $2$-disks in this homotopy of embeddings through $\widehat\omega$.]
\end{enumerate}

These two facts imply that the map of sets (not groups) \seqm{\vbr{\mf S_g,\mc O_3}}{\iota_*}{\vbr{W_{2g},\mc O_3}} given by $\iota_*([f]) := [f\circ\iota]$
is injective, so $\vbr{\mf S_g,\mc O_3}$ can be indentified with the image of $\iota_*$, and in turn, the image of $\iota_*$ can be identified with the classes in the 
group $\vbr{W_{2g},\mc O_3}$ that are mapped to identity by the map of sets \seqm{\vbr{W_{2g},\mc O_3}}{\alpha_*}{\vbr{S^1,\mc O_3}}; 
$\alpha_*([\bar f]) := [\bar f\circ\alpha]$. Namely, we have
\begin{align*}
\vbr{\mf S_g,\mc O_3} & = \cset{\mf k\in\vbr{W_{2g},\mc O_3}}{\alpha_*(\mf k)=1\in 2\mc D_3} \\ 
& = \cset{(\mf a_1,\mf b_1,\ldots,\mf a_g, \mf b_g)\in \bigoplus^{2g}  2\mc D_3} {[\mf a_1,\mf b_1]\cdots[\mf a_g,\mf b_g] = 1\in 2\mc D_3}
\end{align*}
since $\vbr{S^1,\mc O_3}=\pi_1(\mc O_3)\cong 2\mc D_3$ and  
$\vbr{W_{2g},\mc O_3}\cong \bigoplus^{2g} \pi_1(\mc O_3)\cong \bigoplus^{2g}  2\mc D_3$, with $\alpha_*$ given by mapping 
$(\mf a_1,\mf b_1,\ldots,\mf a_g, \mf b_g)\mapsto [\mf a_1,\mf b_1]\cdots[\mf a_g,\mf b_g]$. 

The main point here is that the based homotopy classes $\vbr{\mf S_g,\mc O_3}$ are a subset of $\vbr{W_{2g},\mc O_3}$ under the inclusion $\iota_*$, 
so the homotopy classes of maps \seqm{\mf S_g}{}{\mc O_3} are determined by their restriction to $W_{2g}$. Thus, if $\bd M_{\mc S}:=\mf S_g$, 
we only need to put constraints on the subcomplex of the $1$-skeleton $sk^1\bd M_{\mc S}$ whose embedding is homeomorphic to the embedding of
$W_{2g}$ in $\mf S_g$. By Remark~\ref{RTMain}, 
this means we can take $W_{2g}$ in place of $\bd M$ in Theorem~\ref{TMain}. We also do not need to have a frame field predefined outside $W_{2g}$, 
though in this case we have to ensure that the frame field $f\colon\seqm{W_{2g}}{}{\mc O_3}$ satisfies $\alpha_*([f])=1\in 2\mc D_3$ so that it can be 
extended over the rest of the boundary surface (let alone the interior).

\subsection{Constructing nullhomotopies and defiing the field}

Finally, part (ii) of the proof of Theorem~\ref{TMain} suggests an algorithm for constructing a frame field from given boundary and singularity 
constraints when the conditions of the theorem are met. There is still some work to be done before the construction of the frame field can be 
made fully explicit. Namely, the proof uses nullhomotopies of loops \seqm{S^1}{}{\mc O_3} mapped from the boundaries of $2$-cells in 
$sk^2 M_{\mc S}$ in order to extend the frame field $f$ to $2$-cells. This is followed by nullhomotopies of maps \seqm{S^2}{}{\mc O_3} 
mapped from the boundaries $3$-cells in $sk^3 M_{\mc S}$ to extend the frame field to $3$-cells. In both cases these nullhomotopies are only 
shown to exist, and an explicit construction is still needed to describe $f$ concretely. At the same time we have to choose representative loops
$\chi_j$ and $\gamma_e$ for the solutions $x_j$ and $y_e$ for the system, and then -- when all is said and done -- the resulting field might not be 
very smooth (e.g. it might vary rapidly across edges between adjacent vertices in $sk^1 M_{\mc S}$, depending on how they are embedded).

To begin, we can use the covering $\seqm{S^3}{r}{\mc O_3}$ to obtain explicit nullhomotopies of the loops \seqm{S^1}{\omega}{\mc O_3} on 
boundaries of $2$-cells. This follows the same process that was used to determine the classes $[\zeta(e)]$. As we did for $\zeta(e)$, we construct 
a lift $\lambda$ of $\omega$ in $S^3$, but this time we can expect $\lambda$ to be a loop itself since $\omega$ is nullhomotopic (both endpoints 
in the lift must be the same element in the $2\mc D_3$-fiber). In any case, a loop in $S^3$ such as $\lambda$ can easily be nullhomotoped to it's 
basepoint $b$ by first homotoping $\lambda$ slightly so that it avoids the antipode $-b$ everywhere, then pushing $\lambda$ towards $b$ along the 
geodesic lines from $-b$ to $b$. Mapping this nullhomotopy to $\mc O_3$ via $r$ defines a nullhomotopy of $\omega$.

Once the $2$-cells are filled we are left with finding nullhomotopies of the maps \seqm{S^2}{\kappa}{\mc O_3} from $3$-cell boundaries.
Since covering spaces have general homotopy lifting properties, not just path lifting, a covering space argument using the covering $r$ could
be helpful here as well. This time we might be able construct a lift \seqm{S^2}{\omega}{S^3} of $\kappa$ similarly as we did for loops 
(which must exist since $\kappa$ is nullhomotopic, and in fact $r$ induces the trivial isomorphism on $\pi_2()$), say by subdividing the sphere 
$S^2$ into sufficiently small closed disks. Which ever way our lift $\omega$ is obtained, it can be nullhomotoped is $S^3$ almost as easily as our
loop $\lambda$ was since it cannot be a surjective map to $S^3$ for dimensional reasions. An alternative approach works more generally, not just 
in the presence of covering spaces. Suppose we have a $CW$-decomposition (not necessarily regular) of the $3$-manifold $\mc O_3:=SO(3)/\mc D_3$,
as would follow by finding a Morse function
$$
g\,\colon\,\seqm{\mc O_3}{}{\mb R}.
$$
In this case $\kappa$ can easily be homotoped off of interiors of $3$-cells in $sk^3\mc O_3$ (since $\kappa$ cannot map there surjectively for 
dimensional reasons), for example, by picking any interior point in each $3$-cell $C$ where $\kappa$ does not map to, removing this point from 
$C$ and contracting the remainder to its boundary, in the process homotoping $\kappa$ to its boundary as well (or more concretely, by pushing 
$\kappa$ away from corresponding critical points of $g$ and following its gradient flow). This is a homotopy of $\kappa$ into the $2$-skeleton 
$sk^2\mc O_3$, and while the resulting map might be surjective onto the interiors of $2$-cells this time, it will aslo likely be highly folded. 
Unfolding it might at least make it non-surjective on interiors of $2$-cells so that it can be homotoped as before into the $1$-skeleton where 
the problem is easier. The map will still be folded in the $1$-skeleton however, which means there is a possibility that continuing the unfolding 
here might lead all the way to a nullhomotopy since a map of a $2$-sphere into the $1$-skeleton has to be nullhomotopic (since higher dimensional 
homotopy groups of graphs are trivial). A good optimization based smoothing or unfolding algorithm might be helpful here. 

These nullhomotopies of course depend on how the representative loops for the solutions $x_j$ and $y_e$ are chosen. They can be chosen
using the covering space constructions used to determine the boundary classes $[\zeta(e)]$.
As for smoothness, notice the definition of the frame field in part (ii) Theorem~\ref{TMain} has certain edges $e$ in $sk^1 M_{\mc S}$ tracing loops in 
$\mc O_3$ that go back-and-forth from the basepoint $f(b_0)$ (in order to to phrase the homotopy extension problem of $f$ over $2$-cells in terms of 
homotopy classes of $\pi_1(\mc O_3)$ based at $f(b_0)$). This in practice makes the frame field vary quickly across these edges. 
To remedy this, paths tracing to $f(b_0)$ and back should be contracted and the field smoothed continuously over $e$ before constructing the nullhomotopes 
above.

\subsection{Alternative decompositions}

There might be simpler algebraic systems for extending frame fields if we consider decompositions of $M_{\mc S}$ besides regular $CW$-decompositions.
For example, if we relax our $CW$-decomposition being regular we may obtain a considerable smaller cell count, especially if we consider these
decompositions as being only up-to-homotopy. Namely, there is a space $X$ with the simplest possible cell structure that is homotopy equivalent to $M_{\mc S}$.
Such spaces might be constructed with Discrete Morse theory -- though whichever way we construct them, some work would have to be done reversing the
homotopy equivalence in order to construct our frame field on $M\sm \mc S$.

On the other hand, since we are dealing with $3$-manifolds, decompositions that are naturally suited towards them  such as handle decompositions might be simpler.
In certain cases $3$-manifolds can be decomposed and simplified by introducing cuts along handles. For example, a handle body $M_{\mc S}$ can be simplified to a 
$3$-ball by iteratively cutting along handles to produce a sequence of submanifolds $M_{\mc S}=:H_0, H_1,\ldots,H_k\cong D^3$. Formally, at each cut $j$ we are 
embedding a $2$-disk $D_j$ whose circle boundary is in $\bd H_j$ and not bounded by a disk in $\bd H_j$, and whose interior is in the interior of $H_j$. 
The $2$-disk is fattened on both sides to $F_j:=D_j\times [-1,1]$ so that $\bd D_j\times [-1,1]$ and $\Int(D_j)\times [-1,1]$ remain on the boundary and interior of 
$H_j$ respectively, then $D_j\times (-1,1)$ is deleted from $H_j$ to obtain $H_{j+1}$ with a simpler boundary. Notice that a $3$-frame field $f$ that is already defined on 
$\bd M_{\mc S}$ (as a map to $\mc O_3$) extends over all of $M_{\mc S}$ if and only if $f$ restricts to a nullhomotopic map $\seqm{\bd D_j\cong S^1}{}{\mc O_3}$ 
on the circle boundary of each cut. To see this, notice first that this is necessary and sufficient for $f$ to extend over each $F_j$  since $F_j$ deformation retracts 
onto $G_j:=(\bd D_j\times [-1,1])\cup (D_j\times\{0\})$ (nullhomotopy extends the field from $(S^1\times\{0\})$ to $(D_j\times\{0\})$ so that it is defined on $G_j$, 
then a retraction $\seqm{F_j}{r}{G_j}$ defines a field $f\circ r$ on $F_j$). Outside the $F_j$'s all we have is a $3$-ball $H_k\cong D^3$ remaining, and once $f$ is defined 
on each $F_j$ (on top of $\bd M_{\mc S}$), then it is defined on the $S^2$ boundary of $H_k$ as well. All that is left is to extend it into the interior of $H_k$, which can be 
done since $\pi_2(\mc O_3)=0$.

\nocite{MimuraToda, Hatcher, MosherTangora, Munkres, Milnor, McCleary, CoxeterMoser, Armstrong2015, Vaxman2017}

\bibliographystyle{amsplain}
\bibliography{mybibliography}

\end{document}